\newtheorem{teo}[equation]{Theorem}
\newtheorem{defin}[equation]{Definition}
\newtheorem{remark}[equation]{Remark}
\newtheorem{prop}[equation]{Proposition}
\newtheorem{cor}[equation]{Corollary}
\newtheorem{lemma}[equation]{Lemma}
\newcommand{\Keler}             {K\"{a}hler }
\newcommand{\inte}{\operatorname{int}}
\newcommand{\ga}{\gamma}
\newcommand{\Ga}{\Gamma}
\newcommand{\OO}{\mathcal{O}}
\newcommand{\meno}{^{-1}}
\newcommand{\lam}{\lambda}
\newcommand{\PP}{\mathbb{P}}
\newcommand{\Herm}{\mathcal{H}}
\newcommand{\semv}{\mathscr{S}(V)}
\newcommand{\Mom}{\Phi}
\newcommand{\Momw}{\Phi^W}
\newcommand{\Pos}{\mathcal{P}}
\newcommand{\proba}{\mathscr{P}}
\newcommand{\XS}{\overline{X}^S_\tau}
\newcommand{\XF}{\overline{X}^F_M}
\newcommand{\mut}{{\mu_\tau}}
\newcommand{\vut}{{v_\tau}}
\newcommand{\mis}{{\gamma}}
\newcommand{\tmu}{\tilde{\mu}}
\newcommand{\cmis}{\check{\mis}}
\newcommand{\BB}{\mathbb{B}}
\newcommand{\blym}{\bly_\mis}
\renewcommand{\root}{\Delta}
\newcommand{\simple}{\Pi}
\newcommand{\convo}{\widehat{\OO}}
\newcommand{\intec}{\Omega}
\newcommand{\XX}{\mathfrak{X}}
\newcommand{\bly}{{\Psi}}
\newcommand{\liu}{\mathfrak{u}}
\newcommand{\lium}{\mathfrak{u}^-}
\newcommand{\lia}{\mathfrak{a}}
\newcommand{\lieh}{\mathfrak{h}}
\newcommand{\liek}{\mathfrak{k}}
\newcommand{\lier}{\mathfrak{r}}
\newcommand{\KK}{\mathscr{K}}
\newcommand{\liel}{\mathfrak{l}}
\newcommand{\lieg}{\mathfrak{g}}
\newcommand{\lieb}{\mathfrak{b}}
\newcommand{\liep}{\mathfrak{p}}
\newcommand{\liez}{\mathfrak{z}}
\newcommand{\lies}{\mathfrak{s}}
\newcommand{\liem}{\mathfrak{m}}
\newcommand{\liet}{\mathfrak{t}}
\newcommand{\supp}{\operatorname{supp}}
\newcommand{\omfs}{\om_{FS}}
\newcommand{\UIM}{U^{I,-}}
\newcommand{\liumi}{\mathfrak{u}^{I,-}}
\newcommand{\spam}{\,\operatorname{span}\, }
\newcommand{\chern}{\operatorname{c}}
\newcommand{\alfa}{\alpha}
\newcommand{\alf}{\alpha}
\newcommand{\vacuo}{\emptyset}
\newcommand{\otens}{\widehat{\otimes}}
\newcommand{\tsw}{\tilde{S}_W}
\newcommand{\tkw}{\tilde{K}_W}
\newcommand{\ltsw}{\tilde{\lies}_W}
\newcommand{\ttauw}{\tilde{\tau}_W}
\newcommand{\tauc}{$\tau$-connected }
\newcommand{\piw}{\pi_W}
\newcommand{\Piw}{\check{\pi}_{W}}
\newcommand{\pii}{\piw}
\newcommand{\Pii}{{\Piw}}
\newcommand{\ratw} { \PP(V) \setminus \PP(W^\perp)}
\newcommand{\zz}{Z_W}
\newcommand{\zy}{Y_W}
\newcommand{\htau}{H_\mut}
\newcommand{\hh}{h}
\newcommand{\XW}{i_W(X_W)}
\newcommand{\XWn}{i_{W_n}(X_{W_n})}
\newcommand{\XWp}{i_{W'}(X_{W'})}
\newcommand{\ci}{E_W}
\newcommand{\cip}{E_{W'}}
\newcommand{\ccim}{C_W}
\newcommand{\ratp} { \check{p}}
\newcommand{\im}{\operatorname{Im}}
\newcommand{\pf}{_\#}
\newcommand{\FW}{F_W}
\newcommand{\est}{\Lambda}
\newcommand{\cK}{K}
\newcommand{\thw}{\Theta_W}
\newcommand{\la}{\lambda}
\newcommand{\enf}{\emph}
\newcommand{\desudt}[1] []      {\dfrac {\mathrm {d} #1 }{\mathrm {dt}}}
\newcommand{\desudtzero}        {\desudt \bigg \vert _{t=0} }
\newcommand{\restr}[1]          {\vert_{#1}}
\newcommand{\vol}{\operatorname{vol}}
\newcommand{\Ad}{\operatorname{Ad}}
\newcommand{\ad}{{\operatorname{ad}}}
\newcommand{\sx}{(}
\newcommand{\xs}{)}
\newcommand{\pai}{\langle}
\newcommand{\ring}{\rangle}
\newcommand{\deo}{\langle}
\newcommand{\ode}{\rangle}
\newcommand{\scalo}{\deo \cdot , \cdot \ode}
\newcommand{\relint}{\operatorname{relint}}
\newcommand{\Lie}{\operatorname{Lie}}
\newcommand{\SU} {\operatorname{SU}}
\newcommand{\su} {\mathfrak{su}}
\newcommand{\Sl}{\operatorname{SL}}
\newcommand{\Gl}{\operatorname{GL}}
\newcommand{\PGl}{\operatorname{\PP GL}}
\newcommand{\gl}{\operatorname{\mathfrak{gl}}}
\newcommand{\Ric}{\operatorname{Ric}}
\newcommand{\demi}{\frac{1}{2}}
\newcommand{\End}{\operatorname{End}}
\newcommand{\lds}{\ldots}
\newcommand{\cds}{\cdots}
\renewcommand{\setminus}{-}
\newcommand{\cinf}{C^\infty}
\newcommand{\tr}{\operatorname{tr}}
\newcommand{\ra}{\rightarrow}
\newcommand{\C}{\mathbb{C}}
\newcommand{\R}{\mathbb{R}}
\newcommand{\debar}{\bar{\partial}}
\newcommand{\KE}{{K\"ahler-Einstein} }
\newcommand{\om}{\omega}
\renewcommand{\phi}{\varphi}
\renewcommand{\bigl}{\left}
\renewcommand{\biggl}{\left}
\renewcommand{\Bigl}{\left}
\renewcommand{\bigr}{\right}
\renewcommand{\biggr}{\right}
\renewcommand{\Bigr}{\right}
\newcommand{\met}{\mathscr{M}}
\newcommand{\rootp}{\root_+(\lieh,\simple)}
\newcommand{\liebp}{\lieb_+(\lieh, \simple)}
\newcommand{\data}{(\lieh, \simple)}
\newcommand{\Data}{\mathscr{D}}
\newcommand{\operp}{\stackrel{\perp}{\oplus}}
\newcommand{\liec}{\mathfrak{c}}
\newcommand{\wdata}{{W,\lieh, \simple}}
\newcommand{\liefw}{\mathfrak{c}_{\wdata}}
\newcommand{\liefwp}{\liefw^-}
\newcommand{\Id}{\mathrm{Id}}
\newcommand{\Sim}{\mathfrak{P}}
\newcommand{\A}{A_W}
\begin{document}

\title{Satake-Furstenberg compactifications, the moment map
  and $\la_1$}

\author{Leonardo Biliotti}

\author{Alessandro Ghigi} 

\begin{abstract}
  Let $G$ be a complex semisimple Lie group, $K$ a maximal compact
  subgroup and $\tau$ an irreducible representation of $K$ on $V$.
  Denote by $M$ the unique closed orbit of $G$ in $\PP(V)$ and by
  $\OO$ its image via the moment map.  For any measure $\mis $ on $M$
  we construct a map $\blym$ from the Satake compactification of $G/K$
  (associated to $V$) to the Lie algebra of $K$. 
  If $\mis $ is the $K$-invariant measure, then $\blym$ is a
  homeomorphism of the Satake compactification onto the convex
  envelope of $\OO$. For a large class of measures the image of
  $\blym$ is the convex envelope. As an application we get sharp upper
  bounds for the first eigenvalue of the Laplacian on functions for an
  arbitrary K\"ahler metric on a Hermitian symmetric space.

  \end{abstract}

  \address{Universit\`{a} di Parma} \email{leonardo.biliotti@unipr.it}
  \address{Universit\`a di Milano Bicocca}
  \email{alessandro.ghigi@unimib.it}


\thanks{ The first author was supported
    by the Project MIUR ``Geometric Properties of Real and Complex
    Manifolds'' and by GNSAGA of INdAM.  The second author was
    supported by the PRIN 2007 MIUR ``Moduli, strutture geometriche e
    loro applicazioni'' and by GNSAGA of INdAM.  } %

  \subjclass[2000]{32M10; 
    53C35; 
    58J50
  }

  \maketitle

\tableofcontents{}

\section{Introduction}

Let $M$ be a closed manifold of dimension $n$ and let $\met(M)$ be the
set of Riemannian metrics on $M$.  For $g\in \met(M)$ denote by
$\lambda_1(M,g)$ the first eigenvalue of $-\Delta_g$, where $\Delta_g$
is 
seminegative the Laplace-Beltrami operator on functions.  The quantity
$\lambda_1(M,g) \vol(M,g)^{2/n}$ is scale invariant.  In 1970 Hersch
\cite{hersch} proved that
\begin{gather*}
  \sup_{g\in \met(S^2)} \lambda_1(S^2,g) \vol(S^2,g) = 8\pi.
\end{gather*}
A similar inequality was proved by Yang and Yau \cite{yang-yau} on
higher genus surfaces.  On the other hand Colbois and Dodziuk
\cite{colbois-dodziuk} proved that if $n\geq 3$, the quantity $
\lambda_1(M,g) \vol(M,g)^{2/n} 
$ is not bounded above (see also \cite{berger-premiere},
\cite{bleecker-spectrum} and \cite{colin-de-verdiere-construction}).
If $M$ is a \Keler manifold and $a\in H^2(M)$, denote by $\KK(a)$ the
set of \Keler metrics with \Keler form in $ a$.  It makes sense to
study
\begin{gather}
  \label{iaap}
  I(a) := \sup_{g\in \KK(a)} \la_1(M, g).
\end{gather}
In 1994 Bourguignon, Li and Yau proved that $I(a)<+\infty$ for a large
class of pairs $(M,a)$ (including all projective ones) and gave an
estimate of $I(a)$ in terms of holomorphic maps $M \ra \PP^n$.  The
main tool in their proof is an interesting geometric construction
relating the symmetric space $\Sl(n+1,\C)/\SU(n+1)$ to the set of
positive definite Hermitian matrices with trace 1.  By the spectral
theorem the latter set is the convex hull of the image of the
``Veronese embedding'', that is the map which sends a line $\ell \in
\PP^n$ to the orthogonal projection on $ \ell$.  If one uses the
obvious isomorphism to substitute Hermitian matrices with trace 1 with
tracefree skew-Hermitian matrices, the ``Veronese embedding'' becomes
just the moment map. This interpretation was already used in a
different direction in \cite{arezzo-ghigi-loi}. Hence the construction
of Bourguignon, Li and Yau relates $\Sl(n+1,\C)/\SU(n+1)$ to the
convex hull of the coadjoint orbit corresponding to $\PP^n$.

The present paper has two purposes: 1) generalize this construction by
replacing $\PP^n$ with an arbitrary flag manifold (i.e. a homogeneous
space $G/P$ with $G$ a complex semisimple Lie group and $P$ a
parabolic subgroup); 2) get sharp upper bounds for $\la_1$ on any
Hermitian symmetric space.

The generalization (1) is as follows.  Let $M=G/P$ be a flag manifold.
The space $\Sl(n+1,\C)/\SU(n+1)$ is replaced by the symmetric space
$X=G/K$, where $K$ is a maximal compact subgroup. Let $\om$ be a
$K$-invariant integral \Keler form, $\Mom: M\ra \liek^*$ the moment
map, $\OO=\Mom(M)$ the corresponding coadjoint orbit and denote by
$\convo$ the convex hull of $\OO$.  $(M,\om)$ is the unique closed
orbit of $G$ in $\PP(V)$ for some irreducible representation $\tau : G
\ra \Gl(V)$.  Using $\tau$ one can construct a Satake compactification
$\XS$ of $X$.  Given a probability measure $\mis$ on $M$ we consider
the \enf{Bourguignon-Li-Yau map}
\begin{gather*}
  \blym : X \ra \liek^* \qquad \blym(gK) = \int_M
  \Mom\bigl(\sqrt{g\theta(g\meno)}\cdot x \bigr) d\mis(x)
\end{gather*}
($\theta$ denotes the Cartan involution; see p. \pageref{radice} for
the square root).  We say that $\mis$ is \enf{$\tau$-admissible} if it
does not charge the hyperplane sections of $M\subset \PP(V)$.  Our
results are the following.
\begin{teo}
  The map $\blym$ extends to $\XS$.
  \label{teoA} If $\mu$ is the $K$-invariant probability measure on
  $M$, then $\bly_\mu$ is a homeomorphism of $\XS$ onto $\convo$.
\end{teo}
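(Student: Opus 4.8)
The plan is to prove the two assertions in turn; for the second I would couple a convexity estimate on a single flat with the structure of the boundary of $\XS$. Throughout I assume $\tau$ is effective ($\tau_{*}$ injective); this is implicit in $\XS$ being a compactification of $X$ itself.

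\textit{Extension and the inclusion $\blym(\XS)\subseteq\convo$.}
I would use the standard model of the Satake compactification attached to $\tau$: with a $K$-invariant Hermitian product on $V$, the map $gK\mapsto[\tau(g)\tau(g)^{*}]$ embeds $X=G/K$ in the projective space of positive semidefinite Hermitian endomorphisms of $V$, and $\XS$ is the closure of its image. Put $P_g=\sqrt{\tau(g)\tau(g)^{*}}$, which depends only on $gK$; then $\sqrt{g\theta(g\meno)}\cdot[v]=[P_gv]$ for $[v]\in M$, and for $\xi\in\liek$
\[
  \Mom\bigl(\sqrt{g\theta(g\meno)}\cdot[v]\bigr)(\xi)=\frac{\langle\rho(\xi)P_gv,P_gv\rangle}{\langle P_gv,P_gv\rangle},
\]
with $\rho(\xi)$ the Hermitian operator representing $\xi$; this depends continuously on $[P_gv]\in\PP(V)$ and is bounded by $\|\rho(\xi)\|$, uniformly in $g$ and $v$. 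If $g_nK\to\zeta$ in $\XS$ then, after rescaling, $P_{g_n}$ converges to a non-zero positive semidefinite $Q$ with $[Q]=\zeta$, and $[P_{g_n}v]\to[Qv]$ for $v\notin\ker Q$. Since $\ker Q$ is a proper subspace, $\PP(\ker Q)\cap M$ lies in a hyperplane section of $M$, which is $\mis$-null for $\mis$ $\tau$-admissible --- and the $K$-invariant $\mu$ is $\tau$-admissible, $M$ being irreducible and spanning $\PP(V)$, so that its hyperplane sections are proper analytic subsets. Dominated convergence gives $\blym(g_nK)\to\int_M\Mom(Q\cdot[v])\,d\mis([v])$, which depends only on $\zeta$; this formula is the continuous extension of $\blym$ to $\XS$. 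The same description shows $\sqrt{g\theta(g\meno)}\cdot[v]\in M$, hence lies over a point of $\OO$, so $\blym(\XS)\subseteq\overline{\conv\OO}=\convo$.

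\textit{Injectivity on flats.}
From now on $\mis=\mu$. The point of $K$-invariance is that $\bly_\mu$ is $K$-equivariant, $\bly_\mu(kgK)=\Ad^{*}_k\bly_\mu(gK)$, since $\sqrt{kg\theta((kg)\meno)}=k\sqrt{g\theta(g\meno)}k\meno$, $\Mom$ is equivariant and $\mu$ is $K$-invariant. Consider the functional $\mathcal N_\mu\colon X\to\R$, $\mathcal N_\mu(gK)=\int_M\bigl(\log\|\tau(g)\meno v\|^{2}-\log\|v\|^{2}\bigr)\,d\mu([v])$. Its integrand is the Kempf--Ness function of $v$, convex along every geodesic of $X$, so $\mathcal N_\mu$ is convex; it is strictly convex along every non-constant geodesic, because along $t\mapsto g_0\exp(tY)K$ the second derivative of the integrand vanishes exactly when $\tau(g_0)\meno v$ lies in a single eigenspace of the traceless non-zero operator $\tau_{*}(Y)$, and the $[v]$ with this property form a finite union of proper projective subspaces, whose intersection with the irreducible, linearly non-degenerate $M$ is $\mu$-null. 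Restricting to the maximal flat $\exp(\lia)\cdot o$, $o=eK$ (totally geodesic, so $\mathcal N_\mu$ restricts to a strictly convex function of $H\in\lia\cong\R^{r}$), a direct differentiation shows that $H\mapsto\bly_\mu(\exp(-H)\cdot o)$ takes values in $\liet^{*}$ and, through $\liet^{*}\cong\lia^{*}$, is a positive multiple of the gradient of that strictly convex function; hence $\bly_\mu$ is injective on every maximal flat through $o$, and $\bly_\mu^{-1}(0)=\{o\}$.

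\textit{From flats to $\XS$, and surjectivity.}
Write a general point as $k\exp(H)\cdot o$ with $H$ in the closed Weyl chamber, and set $F(H)=\bly_\mu(\exp(H)\cdot o)$; using $\Ad_{\exp\liet}H=H$ one gets $F(H)\in\liet^{*}$, and $F$ is Weyl-equivariant. Thus if $\bly_\mu(k_1\exp(H_1)\cdot o)=\bly_\mu(k_2\exp(H_2)\cdot o)$ then $F(H_1)$ and $F(H_2)$ are $K$-conjugate, hence Weyl-conjugate; by Weyl-equivariance and injectivity of $F$ on $\lia$, $H_1=\omega H_2$ for some Weyl element $\omega$, and since both lie in the closed chamber, $H_1=H_2=:H$ and $k_1\meno k_2\in\operatorname{Stab}_K(F(H))$. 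So global injectivity of $\bly_\mu$ comes down to the identity $\operatorname{Stab}_K(F(H))=Z_K(H)$, equivalently: $F$ maps the relative interior of each face of the chamber to points with matching isotropy. This compatibility --- and with it surjectivity --- I would obtain from the stratification of $\partial\XS$: each boundary stratum has the form $G'/K'$ with a distinguished irreducible representation induced from $\tau$, on which $\blym$ restricts to the Bourguignon-Li-Yau map of the $K'$-invariant measure on a sub-flag-manifold of $M$ whose moment image is a face of $\convo$; by induction on $\dim X$ this gives that $\bly_\mu$ carries $\partial\XS$ onto $\partial\convo$, homeomorphically onto each face, together with the facial compatibility above, hence injectivity of $\bly_\mu$ on all of $\XS$. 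Finally $\bly_\mu(X)\subseteq\relint\convo$ (the barycentre of a full-support measure on $\OO$ lies in the relative interior), so $\bly_\mu\colon X\to\relint\convo$ is proper (because $\partial\XS$ lands in $\partial\convo$); being a proper continuous injection between manifolds of equal dimension, its image is open and closed, hence all of $\relint\convo$, and therefore $\bly_\mu\colon\XS\to\convo$ is a continuous bijection of a compact space onto a Hausdorff space --- a homeomorphism. The step I expect to be the main obstacle is exactly this last coupling of convexity with the boundary of $\XS$: controlling $\operatorname{Stab}_K(F(H))$, i.e.\ showing $\bly_\mu$ is compatible with the facial stratifications of $\XS$ and of $\convo$, which is what promotes injectivity on flats to injectivity on $\XS$.
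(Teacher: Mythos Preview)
Your extension argument is correct and essentially the paper's (Theorem~\ref{estendo}): pass to the positive-semidefinite limit $Q$, note that $\PP(\ker Q)\cap M$ is $\mu$-null, and apply dominated convergence.

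The Kempf--Ness idea is close to what the paper actually does, but your detour through flats and the stabilizer identity $\operatorname{Stab}_K(F(H))=Z_K(H)$ is a genuine gap---and an unnecessary one. You correctly identify this as the obstacle, and you do not close it: the claim that facial compatibility on $\partial\XS$ retroactively gives the stabilizer identity in the interior is not justified, and there is no obvious reason a priori why $F(H)$ could not land on a more singular Weyl wall than $H$ itself. Your sketch of an inductive boundary argument does not supply this, because the boundary statement concerns limits $H\to\infty$ along chamber faces, not finite $H$.

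The paper bypasses this entirely. The second variation that makes your $\mathcal N_\mu$ strictly convex is exactly the computation
\[
\bigl\langle d\bly_\mu(gK)\bigl((dR_g)_e(iv)\bigr),\,v\bigr\rangle \;=\; -\int_M |\xi_v|^2\, d(g_\#\mu)\;<\;0\quad(v\neq 0),
\]
which shows directly that $\bly_\mu|_X$ is a submersion, hence a local diffeomorphism $X\to\Omega$ (Theorem~\ref{infiammata}). Once one knows $\bly_\mu(\partial\XS)\subset\partial\convo$ (Theorem~\ref{borda}), the restriction $\bly_\mu|_X:X\to\Omega$ is proper, hence a covering of the convex (simply connected) set $\Omega$, hence a diffeomorphism. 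No Cartan decomposition, no stabilizer comparison. If you want to salvage your approach, note that strict convexity of $\mathcal N_\mu$ along \emph{all} geodesics of the Hadamard manifold $X$ already forces its gradient to be a global diffeomorphism onto an open convex set; the work is then to identify that gradient with $\bly_\mu$ on all of $X$, not only on the flat.

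For the boundary, your inductive plan matches the paper's (Lemma~\ref{lemma-traslo}, Theorem~\ref{omeo-K}): each boundary component $i_W(X_W)$ carries the Bourguignon--Li--Yau map of the smaller flag manifold $M_W$, landing in the face $F_W=\convo\cap E_W$. What you leave implicit, and the paper proves, is that \emph{distinct} $\tau$-connected subspaces $W$ give \emph{disjoint} open faces $\relint F_W$: this uses $\Mom^{-1}(E_W)=M_W$ (Theorem~\ref{sticazzi}) together with the fact that $M_W$ spans $\PP(W)$ and hence determines $W$ (Corollary~\ref{MW-determina}, Lemma~\ref{spacco-la-faccia}). Without that, injectivity on $\partial\XS$ does not follow from injectivity on each piece.
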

Both $\XS$ and $\convo$ have a rich boundary structure, given
respectively by boundary components in the sense of Satake and by open
faces in the sense of convex geometry. The homeomorphism $\bly_\mu$
preserves these structures, sending each boundary component
diffeomorphically onto an open face.
\begin{teo} \label{teoB} If $\mis$ is a $\tau$-admissible measure,
  then $\blym (\XS) = \convo$ and $\blym(\partial \XS)
  \subset \partial \convo$.
\end{teo}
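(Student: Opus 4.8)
The plan is to prove the two inclusions of Theorem~\ref{teoB} separately, obtaining $\blym(\XS)\subseteq\convo$ and $\blym(\partial\XS)\subseteq\partial\convo$ directly, and the reverse inclusion $\convo\subseteq\blym(\XS)$ from Theorem~\ref{teoA} by a degree argument. First, for an \emph{arbitrary} probability measure $\mis$ on $M$ and $gK\in X$, the value $\blym(gK)=\int_M\Mom\bigl(\sqrt{g\theta(g\meno)}\cdot x\bigr)\,d\mis(x)$ is the barycenter of the push-forward of $\mis$ under a continuous map $M\to\OO$; since $\OO$ is compact, $\convo=\conv(\OO)$ is compact and convex, so this barycenter lies in $\convo$. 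Hence $\blym(X)\subseteq\convo$, and as $X$ is dense in $\XS$ and, by Theorem~\ref{teoA}, $\blym$ extends continuously to $\XS$, we get $\blym(\XS)\subseteq\overline{\convo}=\convo$.

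Next, assume $\mis$ is $\tau$-admissible and examine $\blym$ on each Satake boundary component $X'\subseteq\partial\XS$. Such an $X'$ is attached to a proper flag submanifold $M'=M\cap\PP(W)\subsetneq M$, and from the explicit form of the extension of $\blym$ one should identify $\blym|_{X'}$, after the natural identifications, with a Bourguignon-Li-Yau map on $X'$ for $M'$ with respect to the measure induced by $\mis$. The crucial point is that, by $\tau$-admissibility, $\mis$ puts no mass on the hyperplane sections of $M$ that degenerate on passing to $X'$, so the induced measure is again a probability measure, carried by $M'$; therefore $\blym(X')\subseteq\conv\bigl(\Mom(M')\bigr)$. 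Since for these $M'$ the image $\Mom(M')$ is contained in the intersection of $\OO$ with a supporting hyperplane of $\convo$, $\conv(\Mom(M'))$ lies in a proper face of $\convo$. Letting $X'$ range over all boundary components gives $\blym(\partial\XS)\subseteq\partial\convo$. This step, in particular the identification of $\blym|_{X'}$ and the verification that $\tau$-admissibility is exactly what keeps the limiting measure a probability measure on $M'$, is where the hypothesis of the theorem is used and where the bulk of the work resides.

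Finally, to prove surjectivity put $\mis_t=(1-t)\mu+t\mis$ for $t\in[0,1]$, where $\mu$ is the $K$-invariant probability measure. Every $\mis_t$ is $\tau$-admissible, since for a hyperplane section $H$ of $M$ one has $\mis(H)=0$ and $\mu(H)=0$ — the latter because $H$ is a proper complex-analytic subvariety of $M$, the linear span of $M$ being the irreducible $G$-module $V$ — so $\mis_t(H)=0$. By the previous two steps each $\bly_{\mis_t}$ is a continuous map of pairs $(\XS,\partial\XS)\to(\convo,\partial\convo)$, and $t\mapsto\bly_{\mis_t}$ is a homotopy of such maps, since on $\tau$-admissible measures the extension of $\blym$ depends continuously (indeed affinely) on $\mis$. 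Now invoke Theorem~\ref{teoA}: $\bly_{\mis_0}=\bly_\mu$ is a homeomorphism of pairs, so through it $\XS$ becomes a topological $d$-ball with boundary $\partial\XS$, where $d=\dim\convo$, and $\bly_\mu$ has degree $\pm1$ as a map of pairs. By homotopy invariance of this degree, $\deg\bly_\mis=\pm1\neq0$; a map of pairs $(D^d,S^{d-1})\to(D^d,S^{d-1})$ of nonzero degree is surjective, hence $\relint\convo\subseteq\blym(\XS)$, and since $\blym(\XS)$ is compact we conclude $\blym(\XS)=\overline{\relint\convo}=\convo$. Together with the first step this proves the theorem, the remaining difficulty being entirely concentrated in the second step.
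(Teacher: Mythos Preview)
Your overall architecture matches the paper's exactly: the inclusion $\blym(\XS)\subseteq\convo$ via the barycenter argument and continuity (paper's Theorem~\ref{estendo}), the boundary inclusion $\blym(\partial\XS)\subseteq\partial\convo$ by showing each component $\XW$ lands in a proper face (paper's Theorem~\ref{borda} via Proposition~\ref{bordo-bordo}), and surjectivity by the linear homotopy $\mis_t=(1-t)\mu+t\mis$ together with Theorem~\ref{teoA} and a degree argument (paper's Theorem~\ref{grado}). So the plan is correct.

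However, you misplace the real difficulty in the middle step. You write that ``$\Mom(M')$ is contained in the intersection of $\OO$ with a supporting hyperplane of $\convo$'' as though this were given, and locate the work instead in identifying $\blym|_{X'}$ with a smaller Bourguignon--Li--Yau map and in checking that $\tau$-admissibility keeps the induced measure a probability. Those two points are in fact the easy ones (Lemma~\ref{lemma-traslo} and a direct a.e.\ convergence argument). The substantive content is precisely the unsupported assertion: showing that for each proper $\tau$-connected $W$ there is an explicit family of affine half-spaces $C_W$ containing $\OO$, with equality set $E_W$ satisfying $\Mom^{-1}(E_W)=M_W$. This is the paper's Theorem~\ref{sticazzi}, and its proof is not formal: it requires a Hessian computation of $h_v=\langle\Mom,v\rangle$ at the highest-weight line in a root basis, together with Morse--Bott theory of moment-map components and Atiyah's connectedness theorem to identify the maximum locus with $M_W$. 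Without this, you have no reason to believe the relevant supporting hyperplane exists, and hence no proof of $\blym(\partial\XS)\subseteq\partial\convo$. Everything else in your outline is fine.
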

In particular a Satake compactification of a symmetric space $G/K$ of
Cartan type IV (i.e. $G=K^\C)$ is homeomorphic to the convex body
$\Omega:=\inte \convo$ in $\liek$. This fact is not new: Kor\'anyi
\cite{koranyi-remarks} recently showed that for any symmetric space
not necessarily of type IV a Satake compactification $\XS$ is
homeomorphic to a convex body in $\liep$ where $\lieg = \liek \oplus
\liep$.  Already for $\PP^1$ the map used by Kor\'anyi is different
from $\bly_\mu$ and the proofs are completely different as well.  Our
main tool is the moment map and the main point is the link between
$\XS$ and the flag manifold $M$, as described below.

It would be interesting to generalize this construction to Satake
compactifications of symmetric spaces not necessarily of type IV, by
replacing flag manifolds by real flag manifolds. We leave this to
further inquiry.

Following the strategy of Bourguignon, Li and Yau we apply Theorem
\ref{teoB} to get the following result.
\begin{teo}
  \label{intro-3}
  If $M$ is a Hermitian symmetric space of the compact type, then
  $I(2\pi\chern_1(M))$ $ =2$.  The bound is attained by the symmetric
  metric.
\end{teo}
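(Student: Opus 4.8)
The plan is to run the Bourguignon--Li--Yau argument with Theorem~\ref{teoB} in place of the spectral theorem. Since $M$ is a Hermitian symmetric space of the compact type, $-K_M$ is very ample, and the associated $G$-equivariant embedding exhibits $M$ as the unique closed orbit in $\PP(V)$ for the irreducible representation $\tau$ on $V=H^0(M,-K_M)^*$, so the constructions of the previous sections apply. Choosing a $K$-invariant Hermitian metric on $V$, the restriction to $M$ of the Fubini--Study form, with the normalisation making $[\om]=2\pi\chern_1(M)$, is a $K$-invariant K\"ahler form $\om$; since on each irreducible factor the $K$-invariant K\"ahler class is unique, $\om$ equals the symmetric metric $\om_0$, which is K\"ahler--Einstein with $\Ric(\om_0)=\om_0$. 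Let $\Mom\colon M\to\liek^*$ be the moment map, $\OO=\Mom(M)$ the coadjoint orbit, and $\convo$ its convex hull; the barycentre $\int_M\Mom\,d\mu$ of the $K$-invariant probability measure $\mu$ is a $K$-fixed vector of $\liek^*$, hence $0$, and $0\in\inte\convo$.

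Next, fix an arbitrary K\"ahler metric $g$ with $\om_g\in 2\pi\chern_1(M)$ and set $\nu=\om_g^n\big/\!\int_M\om_g^n$. This probability measure has a smooth positive density, hence charges no hyperplane section of $M\subset\PP(V)$ and is $\tau$-admissible. By Theorem~\ref{teoB} one has $\bly_\nu(\XS)=\convo$ and $\bly_\nu(\partial\XS)\subset\partial\convo$; since $0\in\inte\convo$, every point of $\bly_\nu\meno(0)$ lies in $X=G/K$, so there is $g_0\in G$ with $\bly_\nu(g_0K)=0$. Writing $h=\sqrt{g_0\theta(g_0\meno)}$ and $\psi(x)=h\cdot x$, the defining formula for $\bly_\nu$ becomes $\int_M\Mom(\psi(x))\,d\nu(x)=0$; hence the components $u_1,\dots,u_m$ of $\Mom\comp\psi$ (with respect to an orthonormal basis of $\liek^*$) have vanishing $g$-mean. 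Summing over $j$ the variational inequality for $\la_1$,
\begin{gather*}
  \la_1(M,g)\sum_j\int_M u_j^2\,dV_g\ \le\ \sum_j\int_M|\nabla_g u_j|^2\,dV_g .
\end{gather*}
Since $\OO$ lies on a sphere of radius $r$ about the origin, $\sum_j u_j^2\equiv r^2$, so the left-hand side is $\la_1(M,g)\,r^2\vol(M,g)$, and $\vol(M,g)$ depends only on the class $2\pi\chern_1(M)$.

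For the right-hand side, $\sum_j du_j\otimes du_j=\psi^*Q$ where $Q=\sum_j d\Mom_j\otimes d\Mom_j$, the pull-back by $\Mom$ of the flat metric of $\liek^*$, is a $K$-invariant symmetric $2$-tensor on $M$. Decomposing $M$ into irreducible Hermitian symmetric factors and using that a $K$-invariant symmetric $2$-tensor on such a factor is a constant multiple of its metric, one can write $Q=\eta_0(\,\cdot\,,J\,\cdot\,)$ with $\eta_0$ a closed semipositive $(1,1)$-form (a positive combination of the pull-backs of the symmetric forms of the factors). As $\psi$ is holomorphic and homotopic to the identity, a standard trace identity together with Stokes' theorem gives
\begin{gather*}
  \sum_j\int_M|\nabla_g u_j|^2\,dV_g=\int_M\frac{(\psi^*\eta_0)\wedge\om_g^{\,n-1}}{(n-1)!}=\frac{[\eta_0]\cdot(2\pi\chern_1(M))^{n-1}}{(n-1)!}=:C ,
\end{gather*}
a number independent of $g$ and of $h$. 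Hence $\la_1(M,g)\,r^2\vol(M,g)\le C$ for every such $g$. To pin down $C$, apply this with $g=\om_0$: then $\nu$ is $K$-invariant, so $\nu=\mu$, which is already balanced ($\psi=\Id$, $u_j=\Mom_j$); on the K\"ahler--Einstein manifold $(M,\om_0)$ with $\Ric(\om_0)=\om_0$ the $\Mom_j$ are the holomorphy potentials of the Killing fields $(e_j)_M$, hence $\Delta_{\om_0}$-eigenfunctions with eigenvalue $2$ (Matsushima), so the inequality is an equality and $C=2\,r^2\vol(M,\om_0)$. Consequently $\la_1(M,g)\le 2$ whenever $\om_g\in 2\pi\chern_1(M)$, i.e. $I(2\pi\chern_1(M))\le 2$; and $\la_1(M,\om_0)\ge 2$ by Lichnerowicz's eigenvalue estimate on K\"ahler--Einstein manifolds, so $\la_1(M,\om_0)=2$ and the symmetric metric attains the bound.

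The one genuinely hard point is the balancing step: producing, for the normalised volume measure of an \emph{arbitrary} metric, a point of $X$ --- not merely of $\XS$ --- at which the Bourguignon--Li--Yau map equals the barycentre $0$. This is precisely what Theorem~\ref{teoB} provides, through the surjectivity of $\bly_\nu$ onto $\convo$ together with $\bly_\nu(\partial\XS)\subset\partial\convo$; the rest is routine K\"ahler geometry, convexity and cohomology.
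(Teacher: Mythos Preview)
Your argument is correct and follows the same Bourguignon--Li--Yau strategy as the paper (Theorem~\ref{teo-sotto} and its Corollary): balance the volume measure via Theorem~\ref{teoB}, use the moment-map components as Rayleigh test functions, and show that the numerator is a cohomological quantity independent of $g$ and of the balancing automorphism. The one genuine difference is in how the constants are identified. The paper computes them explicitly in Lemma~\ref{lemmazk}, proving that $|\Mom|^2=n$ and that the form $\alpha:=\sum_j i\partial f_j\wedge\bar\partial f_j$ equals $\om_{KE}$ on the nose; plugging these into the Rayleigh quotient gives $\la_1\le 2$ directly, without appealing to the spectrum of the symmetric metric. You instead argue abstractly that the $K$-invariant tensor $Q$ is, factor by factor, a constant multiple of the metric (irreducibility of the isotropy representation), hence corresponds to a closed $(1,1)$-form $\eta_0$; you then leave $r^2$ and $C$ undetermined and recover them by evaluating at $g=\om_0$, where Matsushima's theorem makes the Rayleigh inequality an equality. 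Your route trades the coordinate computation of Lemma~\ref{lemmazk} for an additional use of the fact that the $\Mom_j$ are first eigenfunctions of $\om_0$; both approaches are valid, and yours is somewhat more conceptual while the paper's is more self-contained on the constants.
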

This bound was previously known in the following special cases:
$M=S^2$, proven by Hersch \cite{hersch}, $M=\PP^n$ proven by
Bourguignon, Li and Yau \cite{bourguignon-li-yau}, $M$ the complex
Grassmannian, proven by Arezzo, Loi and second author
\cite{arezzo-ghigi-loi}, $M$ an irreducible symmetric space whose
automorphism group is a classical group, proven in
\cite{biliotti-ghigi-AIF}.  We remark that in the statement above $M$
can be reducible.  It would be interesting to know if this bound holds
more generally for any flag manifold. Our proof breaks down since in
such generality Lemma \ref{lemmazk} is false and the form $\alfa$ is
not even closed.

We now describe the contents of the paper.  Consider a set of data
$G,K, \tau, \scalo$, where $G$ is a connected semisimple complex Lie
group, $K$ is a maximal compact subgroup, $\tau: G \ra \Gl(V)$ is an
irreducible representation with finite kernel and $\scalo$ is a
$K$-invariant Hermitian product on $V$.  Out of these data one can
construct a flag manifold on one side and a Satake compactification
$\XS$ of $X=G/K$ on the other.  This is recalled with some detail in
\S\S \ref{section-flags}-\ref{Satake-section}.  Next (\S
\ref{tauconnessi}) we study the relaton between the compactification
$\XS$ and the flag manifold $M$. The boundary components of $\XS$ have
been described in terms of root data in the pioneering work of Satake
\cite{satake-compactifications}. We recast Satake's analysis in more
geometric terms. In particular $\mut$-connected subsets of the simple
root are replaced by \enf{$\tau$-connected subspaces} $W\subset V$, a
class of subspaces that are particularly well adapted to $M$, e.g.
the intersection $M_W:=M\cap \PP(W)$ is a smaller flag manifold.  By
means of the moment map we show that the submanifolds $M_W$ capture
much of the information contained in $W$. Eventually $\XS$ can be
embedded in the set of rational self-maps of $M$ and the boundary
component corresponding to $M_W$ corresponds to the rational maps $M
\dashrightarrow M_W$ (\S \ref{projection-section}).  \S
\ref{tau-moment-section} is probably the most technical part. Using
the Atiyah-Guillemin-Sternberg convexity theorem and a computation in
terms of root data, we show that each $\tau$-connected subspace $W$
determines a subset $C_W\subset \liek^*$ defined by linear
inequalities, see \eqref{eq:def-CW}, such that $\OO \subset C_W$. If
$E_W$ denotes the set where these inequalities become equalities, then
$\OO\cap E_W = \Mom(M_W)$ (Theorem \ref{sticazzi}). In words, each
$\tau$-connected subspace $W$ gives rise to some inequalities
satisfied by $\convo$ and is responsible for a part of its boundary.
The Bourguignon-Li-Yau map is defined in \S \ref{section-BLY-general}.
The interpretation of $\XS$ in terms of rational maps immediately
yields the extension of $\blym$ to $\XS$, while the results of \S
\ref{tau-moment-section} allow to prove that if $\mis$ is
$\tau$-admissible, then $\blym $ maps the boundary of $\XS$ to the
boundary of $\convo$. In \ref{section-K-inv} we show that for
$\mis=\mu$ (i.e. the $K$-invariant measure) the restriction of
$\bly_\mu$ to $X$ is a local diffeomorphism.  This finally allows to
complete the proofs of Theorems \ref{teoA} and \ref{teoB}.  As a
byproduct of Theorem \ref{teoA} we also get a short proof (only for
type IV) of a theorem by Moore \cite{moore-compactifications}, stating
that Satake and Furstenberg compactifications coincide (\S
\ref{section-furst}).  In \S \ref{sezione-autovalore} we give the
application to $\la_1$ and prove Theorem \ref{intro-3}.

\medskip 
After completing this work we became aware of preprint
\cite{sanyal-sottile-sturmfels-orbitopes} by Sanyal, Sottile and
Sturmfels, which is devoted to the study of \emph{orbitopes}, which
are by definition the convex hulls of orbits of a compact group acting
linearly on a 
 real vector space. Our $\convo$ is an example of
orbitope in the adjoint representation.  Although there is no real
overlap between \cite{sanyal-sottile-sturmfels-orbitopes} and the
present paper and although the points of view are rather different, we
believe that their more general approach should in the future shed
some light on our constructions and conversely the approach in this
paper should be of interest also for more general orbitopes.

\medskip

{\bfseries \noindent{Acknowledgements.}}  We wish to thank Laura
Geatti, Peter Heinzner, Lizhen Ji, Karl-Hermann Neeb, Giorgio
Ottaviani and Luiz San Martin for interesting emails\-/dis\-cussions.

\section{Flag manifolds and compactifications}

\subsection{Flag manifolds}
\label{section-flags}

 \label{data}
 Let $G$ be a connected complex semisimple Lie group and $K\subset G$
 a maximal compact subgroup.  Denote by $\theta :G\ra G $ the Cartan
 involution, such that $K=\operatorname{Fix}(\theta)$.  Let $\lieg$
 and $\liek$ be the Lie algebras and let $\theta$ denote also the
 involution on $\lieg$.  Consider an irreducible complex
 representation $\tau : G\ra \Sl(V)$ and a Hermitian product $\scalo$
 on $V$, which is invariant by $K$. This means that for any $g\in G$
 \begin{gather}
   \label{eq:tau-theta}
   \tau(\theta(g) ) = (\tau(g\meno))^* .
 \end{gather}
 We will always assume that $\ker \tau$ is finite, i.e. that $\tau$ is
 nontrivial on any simple factor of $G$.

 $G, K, \tau$ and $ \scalo$ are the basic data for the construction of
 two different objects: a $G$-homogeneous complex manifold with a
 $K$-invariant Hodge metric on the one side and a compactification of
 the space $X=G/K$ on the other side. The first one is a flag
 manifold, the second is a Satake compactification.  In this \S{} we
 describe the flag manifold.  The definition of the Satake
 compactification will be recalled in \S \ref{Satake-section}.

\begin{remark}
  If we compose $\tau$ with the natural map $\Sl(V) \ra \PGl(V)$ and
  factor by the (finite) kernel of the composition, we get a faithful
  projective representation of some finite quotient of $G$.
  Conversely, any faithtul representation of $G$ can be lifted to a
  linear representation of some finite covering of $G$.  Strictly
  speaking the basic object in the construction of flag manifolds and
  Satake compactifications is the projective representation, see
  \cite[p. 85]{satake-compactifications} and
  \cite[p. 63]{borel-ji-libro}. Nevertheless both constructions are
  not affected by passing to finite quotients/coverings (compare
  \cite[\S 4.1]{guivarch-ji-taylor}).  For simplicity we will deal
  with the representation $\tau : G\ra \Sl(V)$.
\end{remark}

The $K$-invariant Hermitian product on $V$ gives rise to a
$K$-invariant \Keler metric on $\PP(V)$, namely the Fubini-Study
metric associated to $\scalo$. Since $\tau$ is irreducible, in
$\PP(V)$ there is exactly one closed orbit of $G$, which coincides
with the unique complex orbit of $K$ (see
\cite[p. 124]{huckleberry-introduction-DMV}).  We denote this orbit by
$M$ and endow it with the restriction $\om$ of the Fubini-Study
metric. Thus $(M, \om)$ is a \Keler manifold and $K$ acts
transitively, symplectically and almost effectively on $M$.  Let
$\xi_v \in \XX(M)$ denote the fundamental vector field corresponding
to $v\in \liek$.  Since $M$ is simply connected and $K$ is connected
and semisimple the action of $K$ is Hamiltonian with a unique moment
map $ \Mom: M \ra \liek^*$ (see e.g.  \cite[Thm. 26.1 pp.
185-187]{guillemin-sternberg-techniques}), that for any $v\in \liek$
and any $a\in K$ satisfies
\begin{gather}
  \label{eq:mement}
  d \pai \Mom, v\ring = -i_{\xi_v}\om \qquad \Mom(ax)= \Ad^* (a) (
  \Mom(x))=\Phi (x) \circ \Ad (a^{-1} ).
\end{gather}
$\Mom$ is a diffeomorphism of $M$ onto a coadjoint orbit of $K$ that
we denote by $\OO:=\Mom(M)$ (see e.g. \cite[Thm. 32.17 p.
260]{guillemin-sternberg-techniques}).  We call $M$, endowed with all
these structures, the \emph{flag manifold} associated to $G,K$, $\tau$
and $\scalo$.

Denote by $B$ the Killing form of $\lieg$ and by
\begin{gather*}
  \deo \ , \ \ode : \liek^* \times \liek \ra \R
\end{gather*}
the duality pairing.  Fix a Cartan subalgebra $\lieh\subset \lieg$
such that $\theta (\lieh)=\lieh$.  Set
\begin{gather}
  \label{eq:def-liet}
  \liet : = \lieh \cap \liek \qquad \lia:= i \liet.
\end{gather}
$\liet$ is the Lie algebra of a maximal torus $T\subset K$ and $\lieh
= \liet \oplus \lia$.  Denote by $\root(\lieg, \lieh)$ the root system
of $(\lieg, \lieh)$.  Let $\simple$ be a system of simple roots for
$(\lieg, \lieh)$.  Then we say that $\data$ is a \enf{root datum}. (By
this we also understand that $\lieh$ is $\theta$-stable.)  We denote
by $\root_+=\rootp$ the corresponding set of positive roots and put
$\root_-=-\root_+$.  For $\alfa \in \root$ let $H_\alfa \in \lieh$ be
such that $\alfa(X) = B(X,H_\alfa)$ for any $X\in \lieh$.  Also we
transfer $B\restr{\lieh\times \lieh}$ to a bilinear form on $\lieh^*$
by the definition
\begin{gather}
  \label{rappresento}
  B( \alpha, \beta )
  =B(H_{\alpha},H_{\beta})=\alpha(H_{\beta})=\beta(H_{\alpha} )
\end{gather}
for $\alpha, \beta \in \lieh^*$.
Denote by $\liebp$ the standard \enf{positive} Borel subalgebra:
\begin{equation}
  \label{eq:borel}
  \liebp=\lieh \oplus \bigoplus _{\alfa \in \root_+} \lieg_\alfa
\end{equation}
Given a set $I\subset \simple$ of simple roots put
\begin{gather}
  \label{eq:sottoalgebre-1}
  \begin{gathered}
    \root_I = \root \cap \spam (I) \qquad \liep_I = \liebp \oplus
    \bigoplus _{\alfa \in \root_I \cap
      \root_-} \lieg_\alfa\\
    \lieh_I = \bigcap_{\alfa \in I } \ker \alfa \qquad
    \lieh^I = \bigoplus _{\alfa \in I} \C H_\alfa \\
    \lies_I = \lieh^I \oplus \bigoplus _{\alfa \in \root_I}
    \lieg_\alfa \qquad \liu_I = \bigoplus _{\alfa \in \root_+
      \setminus \root_I} \lieg_\alf .
  \end{gathered}
\end{gather}
(E.g. $\liep_\emptyset = \liebp$ and $\liep_\simple = \lieg$.)  Then
$\liep_I$ is a parabolic subalgebra of $\lieg$ and any parabolic
subalgebra containing $\liebp$ is of this form. There is a
decomposition $\lieh = \lieh_I {\oplus} \lieh^I$ which is
$B$-orthogonal. $\liu_I$ is a nilpotent ideal of $\liep_I$, while
$\lies_I$ is a semisimple subalgebra of $\lieg$ which commutes with
$\lieh_I$.  Denote by $P_I, U_I$ and $S_I$ the connected subgroups of
$G$ with Lie algebras $\liep_I, \liu_I$ and $\lies_I$
respectively. They are closed subgroups, $P_I$ is parabolic, $U_I$ is
the unipotent radical of $P_I$, while $S_I$ is semisimple.  Moreover
$\lieh_I\oplus \lies_I$ is a reductive subalgebra and one has the
\emph{Chevalley} (or \emph{algebraic Levi}) \emph{decomposition}
$\liep_I = \liu_I \oplus \lieh_I \oplus \lies_I$ (see
e.g. \cite[p. 32]{ency-lie-algebras-III}).

Once a root datum $\data$ has been fixed, the representation $\tau$
determines the line $x_0=\C\vut$ spanned by any highest weight vector
$\vut$ and $M = G \cdot x_0 = K\cdot x_0$.  The stabilizer of $x_0$ in
$G$ is a parabolic subgroup $P$ which does not contain any simple
factor of $G$, and $K_0=K\cap P$ is the centralizer of a subtorus of
$T$ \cite[Thm. 1]{serre-Borel-Weil}.  The following computation is
well-known (see e.g. \cite{ziegler-Kostant} or
\cite[p.63]{baston-eastwood}, that has the opposite sign convention
for $\Mom$).  We recall the proof for the reader's convenience.
\begin{prop}\label{salute}
  Let $\mut$ be the highest weight of $\tau$ and let $\vut$ be a
  highest weight vector. Set $x_0=[\vut]$ and for any $X\in \lieg$
  write $X=X^\lieh + \sum_{\alfa\in \root}X_\alfa$ where $X^\lieh \in
  \lieh$ and $X_\alfa\in\lieg_\alfa$. Then
  \begin{gather}
    \label{eq:momento-x0}
    \deo \Mom (x_0), X \ode = -i \mut (X^\lieh)
  \end{gather}
\end{prop}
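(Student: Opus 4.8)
The plan is to compute the moment map at the highest weight line $x_0 = [\vut]$ directly from its defining relation $d\langle \Mom, X\rangle = -i_{\xi_X}\om$, using the Fubini--Study symplectic form on $\PP(V)$ restricted to $M$. First I would recall the standard expression for the Fubini--Study moment map of the $K$-action on $\PP(V)$: for a unit vector $v\in V$ and the line $\ell = [v]$, one has $\langle \Mom([v]), X\rangle = \dfrac{1}{i}\dfrac{\langle \tau_*(X) v, v\rangle}{\langle v,v\rangle}$ (up to a normalization fixed by $\om = \omfs\restr M$), where $\tau_*$ denotes the differential of $\tau$. This is the content of the ``well-known computation'' cited before the proposition, and I would either quote it or derive it in one line by pairing $-i_{\xi_X}\omfs$ against a tangent vector at $[v]$ and recognizing the Kähler potential $\log\|v\|^2$.

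Next I would plug in $v = \vut$, the highest weight vector. The key algebraic fact is that $\tau_*(X^\lieh)\vut = \mut(X^\lieh)\vut$ since $\vut$ has weight $\mut$, while for $\alfa \in \root$ the component $X_\alfa \in \lieg_\alfa$ acts by $\tau_*(X_\alfa)\colon V_{\nu} \to V_{\nu+\alfa}$ on weight spaces, so $\tau_*(X_\alfa)\vut$ lies in the weight space $V_{\mut + \alfa}$, which is orthogonal to $V_\mut \ni \vut$ (distinct weight spaces are orthogonal for a $K$-invariant Hermitian product, because $T \subset K$). Hence $\langle \tau_*(X)\vut, \vut\rangle = \langle \tau_*(X^\lieh)\vut,\vut\rangle = \mut(X^\lieh)\langle \vut,\vut\rangle$, and the formula $\langle \Mom(x_0), X\rangle = -i\,\mut(X^\lieh)$ drops out after the cancellation of $\|\vut\|^2$.

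The only genuine subtlety — the point I would treat most carefully — is matching sign and normalization conventions: the paper uses $\tau: G \to \Sl(V)$ with $\om$ the \emph{restriction} of $\omfs$ to $M$ (not a rescaling), the convention $d\langle\Mom,v\rangle = -i_{\xi_v}\om$, and it explicitly flags that some references (e.g. \cite{baston-eastwood}) use the opposite sign. So I would pin down $\omfs$ precisely from the Hermitian product $\scalo$, verify that the fundamental vector field $\xi_X$ on $\PP(V)$ is the one induced by $t \mapsto \exp(tX)$ acting via $\tau$, and check that the resulting constant is exactly $-i$ and not $+i$ or $-i/2\pi$; this is where an error would creep in. The weight-space orthogonality and equivariance arguments are routine once the representation-theoretic setup (highest weight $\mut$, $K$-invariance forcing orthogonality of weight spaces) is in place, so I expect no real obstacle there; the whole proof is short.
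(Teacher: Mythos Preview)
Your proposal is correct and follows essentially the same route as the paper's own proof: quote the Fubini--Study moment map formula $\langle \Mom^{\PP(V)}([v]), A\rangle = -i\,\langle Av,v\rangle/|v|^2$, restrict to $M$ via $\tau$, plug in $v=\vut$, and kill the root-space contributions by orthogonality of distinct weight spaces. The paper simply states the Fubini--Study formula rather than rederiving it, so your plan to pin down the sign convention carefully is if anything slightly more thorough than what appears there.
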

\begin{proof}
  Fix on $\PP(V)$ the Fubini-Study metric induced by $\scalo$.  The
  moment map $\Mom^{\PP(V)}$ of $(\PP(V), \omfs)$ with respect to the
  $\SU(V)$-action is given by the formula
  \begin{gather*}
    \pai \Mom^{\PP(V)}\left( [v] \right) , A \ring = -i \frac{\deo Av,
      v\ode }{|v|^2} \qquad A\in \su(V),\ v\in V.
  \end{gather*}
  Since the inclusion $ M \hookrightarrow \PP(V)$ is $K$-equivariant
  and symplectic $ \Mom^M (x) = \Mom^{\PP(V)} (x) \circ \tau $ where
  $\tau :\liek \ra \su(V)$ denotes the infinitesimal representation.
  So for any $X\in \liek$
  \begin{gather*}
    \pai \Mom^M (x_0),X\ring = -i \frac{\deo \tau(X)\vut, \vut\ode
    }{|\vut|^2}.
  \end{gather*}
  Write $X=X^\lieh + \sum_\alfa X_\alf$ with $X_\alf \in \lieg_\alf$.
  Since $\tau(X_\alfa)\vut \in V_{\mut+\alfa} \perp V_\mut$
  \begin{gather*}
    \deo \tau(X) \vut, \vut\ode = \deo \tau(X^\lieh) \vut, \vut\ode =
    \mut(X^\lieh)|\vut|^2.
  \end{gather*}
  This yields the result.
\end{proof}

\subsection{Satake compactifications}
\label{Satake-section}

Assume given a \enf{real} semisimple Lie group $G$, a maximal compact
subgroup $K\subset G$ and an infinitesimally faithful irreducible
representation $\tau$ of $G$ on a complex vector space $V$, which is
endowed with a $K$-invariant Hermitian product. With these data Satake
\cite{satake-compactifications} constructed a compactification $\XS$
of the symmetric space $X=G/K$.  In this paper we are concerned only
with the case in which $G=K^\C$ (i.e. $X$ is of Cartan type IV).  In
this case the Lie theoretic data simplify considerably since the
restricted roots of the real Lie algebra $\lieg_\R$ underlying $\lieg$
coincide with the roots of the complex Lie algebra $\lieg$.  We wish
to recall the construction of the Satake compactifications and some of
their relevant properties restricting to this particular class of
symmetric spaces and taking advantage of this simplication.  Proofs
can be found in the general case in the book \cite[\S
I.1]{borel-ji-libro} which we follow for most of the notation.

Put
\begin{gather*}
  \Herm (V) = \{ A\in \End(V) : A=A^*\}.
\end{gather*}
Denote by $\pi : \Herm(V)\setminus \{0\} \ra \PP(\Herm(V))$ the
canonical projection and set
\begin{gather*}
  \Pos(V) = \pi(\{A\in \Herm (V) : A >0\}) \subset \PP(\Herm(V)).
\end{gather*}
$\Pos(V)$ consists of points $[A]$ such that $A$ is invertible and all
its eigenvalues have the same sign.
\begin{lemma}
  \label{prop-PV}
  (a) $ \overline { \Pos(V) } =\pi ( \{A\in \Herm (V) : A\neq 0 ,
  A\geq 0\})$.  (b) The restriction of $\pi$ to $ \{A\in \Herm (V) : A
  >0, \det A=1\}$ is a homeomorphism onto $\Pos(V)$.  (c) The
  restriction of $\pi$ to $ \{A\in \Herm (V) : A \geq 0, \tr A=1\}$ is
  a homeomorphism onto $\overline{\Pos(V)}$.
\end{lemma}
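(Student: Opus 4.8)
The plan is elementary: everything reduces to rescaling representatives, using that $\pi$ is open, hence a quotient map, and that the finite-dimensional space $\Herm(V)$ carries a canonical topology. Fix a norm on $\Herm(V)$ and let $S$ be the corresponding unit sphere; since $S$ is compact and $\pi\restr{S}$ is a continuous surjection onto the Hausdorff space $\PP(\Herm(V))$, the map $\pi\restr{S}$ is closed, and since $\pi$ is open, $\pi$ restricts to a quotient map on every saturated open subset. The only step that is not completely formal is the continuity of the inverse map in (b), because the cross-section used there is not compact; in (c) one simply invokes compactness.

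For (a), let $C=\{A\in\Herm(V):A\geq 0\}$, a closed convex cone. Then $C\cap S$ is compact, so $\pi(C\setminus\{0\})=\pi(C\cap S)$ is compact, in particular closed, and it contains $\Pos(V)$, hence it contains $\overline{\Pos(V)}$. For the reverse inclusion, if $A\in C$ with $A\neq 0$, then $A+t\,\Id_V>0$ for every $t>0$ and $A+t\,\Id_V\to A$ as $t\downarrow 0$, whence $\pi(A)=\lim_{t\downarrow 0}\pi(A+t\,\Id_V)\in\overline{\Pos(V)}$. This gives $\overline{\Pos(V)}=\pi(C\setminus\{0\})=\pi(\{A\in\Herm(V):A\neq 0,\ A\geq 0\})$.

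For (b), set $n=\dim_\C V$ and $P_1=\{A\in\Herm(V):A>0,\ \det A=1\}$. If $A,B\in P_1$ and $\pi(A)=\pi(B)$, then $B=\lambda A$ with $\lambda\in\R$; positivity forces $\lambda>0$, and then $1=\det B=\lambda^n\det A=\lambda^n$ forces $\lambda=1$, so $\pi\restr{P_1}$ is injective. Given $[A]\in\Pos(V)$, after replacing $A$ by $-A$ if necessary we may assume $A>0$, and then $(\det A)^{-1/n}A\in P_1$ represents $[A]$; so $\pi\restr{P_1}$ is onto $\Pos(V)$, and it is obviously continuous. Finally, $\pi^{-1}(\Pos(V))$ is the disjoint union of the open cones $\mathcal C^\pm=\{A\in\Herm(V):\pm A>0\}$; define $f\colon\pi^{-1}(\Pos(V))\to P_1$ by $f(A)=(\det A)^{-1/n}A$ on $\mathcal C^+$ and $f(A)=(\det(-A))^{-1/n}(-A)$ on $\mathcal C^-$. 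Then $f$ is continuous and constant on the fibres of $\pi$, so it descends to a continuous map $\overline f\colon\Pos(V)\to P_1$, which one checks directly to be the inverse of $\pi\restr{P_1}$.

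For (c), put $P_1'=\{A\in\Herm(V):A\geq 0,\ \tr A=1\}$. A positive semidefinite matrix with trace $1$ has all eigenvalues in $[0,1]$, so $P_1'$ is bounded; it is plainly closed, hence compact. If $A,B\in P_1'$ and $B=\lambda A$, then $\lambda>0$ (both matrices are nonzero and positive semidefinite) and $\lambda=\tr B/\tr A=1$, so $\pi\restr{P_1'}$ is injective; by (a) every point of $\overline{\Pos(V)}$ equals $\pi(A)$ for some $A\geq 0$, $A\neq 0$, and then $\tr A>0$ and $(\tr A)^{-1}A\in P_1'$ represents it, so $\pi\restr{P_1'}$ is onto $\overline{\Pos(V)}$. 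Being a continuous bijection from the compact space $P_1'$ onto the Hausdorff space $\overline{\Pos(V)}\subset\PP(\Herm(V))$, it is a homeomorphism. The expected main obstacle is thus the soft topological point in (b); the rest is routine.
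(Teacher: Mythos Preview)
Your proof is correct. The paper does not prove this lemma at all; it is stated without proof and treated as an elementary fact, so there is nothing to compare against beyond noting that your argument supplies the details the authors omitted.
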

\begin{defin}
  For $G,K, \tau, \scalo $ as at p. \pageref{data} set
  \begin{gather}
    \label{eq:imbeddoinpos}
    i_\tau : X: =G/K \ra \Pos(V) \qquad i_\tau(gK) =[ \tau(g)
    \tau(g)^*].
  \end{gather}
  The \enf{Satake compactification} of $X$ associated to $\tau$ and
  $\scalo$ is the space $ \XS:=\overline {i_\tau (X) } $.  The closure
  is taken in $\PP(\Herm(V))$.
\end{defin}
Since $\Sl(V)$ and hence $G$ act on $\PP(\Herm(V))$ by conjugation,
$\XS$ is a $G$-compact\-if\-ication.  We stress that $\XS$ depends only on
$G,K, \tau$ and $\scalo$.

Satake gave a thorough description of the boundary $\partial\XS :=\XS
- i_\tau (X)$ in terms of root data.  We will now recall this
description in our simplified setting. In \S \ref{tauconnessi} we will
reinterpret this description in a way that does not depend on the root
data.

\begin{defin}
Fix a root datum $\data$.
  A subset $E\subset \lia^*$ is \emph{connected} if there is no pair
  of disjoint subsets $D,C\subset E$ such that $D\cup C =E$,
  and $B\sx\lam,\mu\xs=0$ for any $\lam \in D$ and $\mu \in C$.
\end{defin}
(A thorough discussion of connected subsets can be found in \cite[\S
5]{moore-compactifications}.)  Connected components are defined as
usual. For example the connected components of $\simple$ are the
subsets corresponding to the simple roots of the simple ideals in
$\lieg$.  Denote by $\mut$ the highest weight of $\tau$ with respect
to $\data$ and let $v_\tau$ be a highest weight vector.
\begin{defin}
  A subset $I\subset \simple$ is $\mut$-\enf{connected} if
  $I\cup\{\mut\}$ is connected.
\end{defin}
Equivalently, $I$ is $\mut$-connected if and only if every connected
component of $I$ contains at least one element $\alfa$ with $B(\mut,
\alfa) = \mut(H_\alfa)\neq 0$.  By the highest weight theorem, if
$\lam\in \lia^*$ is a weigth of $\tau$, then
\begin{gather*}
  \lam = \mut - \sum_{\alfa \in \simple} c_\alfa \alfa
\end{gather*}
for some nonnegative integers $c_\alfa$. The support of $\lam$ is the
set $ \supp (\lam) : = \{\alfa \in \simple: c_\alfa >0\}$.

\begin{lemma}
  [\protect{\cite[Lemma 5 p. 87]{satake-compactifications}}] $I\subset
  \simple$ is $\mut$-connected if and only if $I=\supp (\lam)$ for
  some weight $\lam$ of $\tau$.
\end{lemma}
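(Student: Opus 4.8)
The plan is to deduce both implications from two standard facts about the weight set $\operatorname{wt}(\tau)$ of the irreducible representation $\tau$: (i) $\operatorname{wt}(\tau)$ is invariant under the Weyl group $\Weyl$, and $\mut-\lambda\in\sum_{\alfa\in\simple}\Zeta_{\ge0}\,\alfa$ for every $\lambda\in\operatorname{wt}(\tau)$ (the highest weight theorem recalled above); (ii) if $\lambda\in\operatorname{wt}(\tau)$ and $\alfa\in\simple$ with $B(\lambda,\alfa)>0$, then $\lambda-\alfa\in\operatorname{wt}(\tau)$ --- this comes from the $\alfa$-root string through $\lambda$, which runs from $\lambda-p\alfa$ to $\lambda+q\alfa$ with $p-q=2B(\lambda,\alfa)/B(\alfa,\alfa)>0$, so $p\ge1$. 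Throughout recall that $B(\mut,\alfa)=\mut(H_\alfa)\ge0$ since $\mut$ is dominant, and that, by the characterization given just before the statement, $I$ fails to be $\mut$-connected precisely when some connected component $C$ of $I$ is $B$-orthogonal to $\mut$.

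For the implication ``$\supp(\lambda)$ is $\mut$-connected for every $\lambda\in\operatorname{wt}(\tau)$'' I would argue by contradiction. If $I=\supp(\lambda)$ is not $\mut$-connected, pick a connected component $C$ of $I$ that is $B$-orthogonal to $\mut$; being a component of $I$, it is also orthogonal to $I\setminus C$. Write $\lambda=\mut-\delta-\beta$ with $\beta=\sum_{\alfa\in C}c_\alfa\alfa$ (all $c_\alfa>0$) and $\delta=\sum_{\alfa\in I\setminus C}c_\alfa\alfa$, so $\beta\perp\mut$ and $\beta\perp\delta$. Let $\Weyl_C\subset\Weyl$ be generated by the $s_\alfa$, $\alfa\in C$, and let $w\in\Weyl_C$ be its longest element. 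Since $\mut$ and $\delta$ are $\Weyl_C$-fixed (being orthogonal to $C$), one gets $w\lambda=\mut-\delta-w\beta$, and $w\lambda\in\operatorname{wt}(\tau)$ by (i). But $\beta$ is a strictly positive combination of the simple roots $C$ of the sub-root-system they generate, so $w\beta$ is a strictly negative one; hence $\mut-w\lambda=\delta+w\beta$ has strictly negative coefficients on the roots of $C$ (which do not occur in $\delta$), contradicting (i).

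For the converse I plan an induction on $|I|$, the case $I=\emptyset$ being handled by $\lambda=\mut$. Given a $\mut$-connected $I\neq\emptyset$, form the graph on the vertex set $I\cup\{\mut\}$ joining two elements iff they are not $B$-orthogonal; by hypothesis it is connected, so I choose a spanning tree and a leaf $\alfa_0\neq\mut$ of it, with unique neighbour $\ga_0\in J\cup\{\mut\}$, where $J:=I\setminus\{\alfa_0\}$. Then $J$ is again $\mut$-connected (the tree minus its leaf spans $J\cup\{\mut\}$), so by induction there is $\nu=\mut-\sum_{\alfa\in J}c_\alfa\alfa\in\operatorname{wt}(\tau)$ with all $c_\alfa>0$. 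One computes $B(\nu,\alfa_0)=B(\mut,\alfa_0)-\sum_{\alfa\in J}c_\alfa B(\alfa,\alfa_0)\ge0$ (every summand is $\ge0$ because $B(\alfa,\alfa_0)\le0$ for distinct simple roots), and it is $>0$ precisely because $\ga_0$ is not orthogonal to $\alfa_0$ (if $\ga_0=\mut$ the first term is positive; if $\ga_0\in J$ the corresponding summand is). Then (ii) gives $\nu-\alfa_0\in\operatorname{wt}(\tau)$, and $\supp(\nu-\alfa_0)=J\cup\{\alfa_0\}=I$. I expect the first implication to be the real obstacle: it genuinely uses the global $\Weyl$-symmetry of $\operatorname{wt}(\tau)$ through the element $w$, whereas the converse is a routine induction once (ii) is available.
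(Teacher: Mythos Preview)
The paper does not give its own proof of this lemma; it simply quotes it from Satake's original article \cite[Lemma 5 p.~87]{satake-compactifications}. So there is nothing in the present paper to compare your argument against.

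That said, your proof is correct. Both halves are standard: the forward direction uses the $\Weyl$-invariance of $\operatorname{wt}(\tau)$ via the longest element of the parabolic subgroup $\Weyl_C$ to produce a weight whose expression $\mut-\cdot$ has a negative coefficient, and the converse is the usual induction along a spanning tree of the ``non-orthogonality'' graph on $I\cup\{\mut\}$, peeling off a leaf $\alfa_0\neq\mut$ and applying the $\alfa_0$-string argument. The only point worth making explicit is that a tree on $|I|+1\ge 2$ vertices has at least two leaves, so a leaf $\alfa_0\neq\mut$ always exists; you use this implicitly. This is essentially how Satake argues as well, so your write-up is in line with the cited source.
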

For example $\emptyset = \supp(\mut)$ and $\simple$ is
$\mut$-connected since $\tau$ is nontrivial on any simple factor of
$G$.  Given $\lam\in \lieh^*$ denote by $V_\lam$ the corresponding
eigenspace and set
\begin{gather*}
  V_I=\bigoplus_{\supp(\lam) \subset I} V_\lam.
\end{gather*}
\label{def-VI-SI}

\begin{lemma}
  [{\cite[Lemma 8 p. 89]{satake-compactifications}}]
  \label{lemmetto-di-Satake-taui}
  Let $S_I$ be the subgroup of $G$ defined in
  \eqref{eq:sottoalgebre-1}. Then $\tau(g) (V_I) \subset V_I$ for any
  $g\in S_I$ and the representation $\tau_I : S_I \ra \Gl(V_I)$ gotten
  in this way is irreducible.  The highest weight of $\tau_I$ is
  $\mut\restr{\lieh^I}$ and $v_\tau \in V_I$ is a highest weight
  vector.
\end{lemma}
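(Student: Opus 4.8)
The plan is to establish the three assertions in turn; the invariance and the identification of the highest weight vector are immediate bookkeeping with supports of weights, and the only real point is irreducibility. Throughout, $V$ denotes the irreducible $\lieg$-module with highest weight $\mut$, and for a weight $\lam$ of $\tau$ we write $\lam=\mut-\sum_{\alfa\in\simple}c_\alfa\alfa$ with $c_\alfa\in\Zeta_{\geq 0}$, so that $\supp(\lam)=\{\alfa:c_\alfa>0\}$.

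\emph{Invariance.} Since $S_I$ is connected with Lie algebra $\lies_I$, it is enough to show $\tau(\lies_I)V_I\subset V_I$, and this may be checked on a weight vector $w\in V_\lam$ with $\supp(\lam)\subset I$. The Cartan part $\lieh^I$ preserves $V_\lam$; and for $\alfa\in\root_I$ and $X\in\lieg_\alfa$ one has $\tau(X)w\in V_{\lam+\alfa}$, which is either $0$ or, since $\alfa\in\spam(I)$ and $\supp(\lam)\subset I$, has $\supp(\lam+\alfa)\subset I$ (the coefficient of any simple root outside $I$ in $\mut-(\lam+\alfa)$ is unchanged, hence still $0$). In all cases $\tau(X)w\in V_I$, so $V_I$ is $\lies_I$-stable, hence $S_I$-stable. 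Moreover $\supp(\mut)=\emptyset\subset I$ gives $v_\tau\in V_I$, and since $v_\tau$ is killed by $\tau(\lieg_\alfa)$ for every $\alfa\in\root_+$, in particular for $\alfa\in\root_I\cap\root_+$, while $\lieh^I$ acts on it by $\mut\restr{\lieh^I}$, the vector $v_\tau$ is a highest weight vector for $\tau_I$ of weight $\mut\restr{\lieh^I}$. It remains only to see that $v_\tau$ generates $V_I$ over $S_I$.

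\emph{Key observation.} Any $\lieh$-weight vector $w\in V_I$ annihilated by $\tau(\lieg_\alfa)$ for all $\alfa\in I$ is a scalar multiple of $v_\tau$. Indeed, say $w\in V_\lam$ with $\supp(\lam)\subset I$. For $\alfa\in\simple\setminus I$ we have $c_\alfa=0$, so the coefficient of $\alfa$ in $\mut-(\lam+\alfa)$ equals $-1$; hence $\lam+\alfa$ is not a weight of $V$ and $\tau(\lieg_\alfa)w=0$ automatically. Together with the hypothesis this shows $w$ is killed by $\tau(\lieg_\alfa)$ for all $\alfa\in\simple$, so $w$ is a highest weight vector of the irreducible module $V$, whence $\lam=\mut$ and $w\in\C v_\tau$. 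Note also that every $\alfa\in I$ vanishes on $\lieh_I$, so $\lieh_I$ acts on $V_I$ by the single functional $\mut\restr{\lieh_I}$; consequently the $\lieh^I$-weight spaces of $V_I$ coincide with its $\lieh$-weight spaces, and any $\lieh^I$-weight vector lying in $V_I$ is automatically an $\lieh$-weight vector.

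\emph{Irreducibility.} Since $\lies_I$ is semisimple, $V_I$ is a completely reducible $\lies_I$-module. Let $V'=U(\lies_I)v_\tau$ be the submodule generated by $v_\tau$. If $V'\neq V_I$, choose an $\lies_I$-stable complement $V''\neq 0$ and inside it a highest weight vector $w$ for $\lies_I$, i.e. a nonzero $\lieh^I$-weight vector killed by $\tau(\lieg_\alfa)$ for all $\alfa\in\root_I\cap\root_+$, in particular for $\alfa\in I$. By the previous paragraph $w$ is an $\lieh$-weight vector, so the key observation forces $w\in\C v_\tau\subset V'$, contradicting $0\neq w\in V''$ and $V'\cap V''=0$. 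Hence $V_I=U(\lies_I)v_\tau$, which, $S_I$ being connected, is an irreducible representation of $S_I$ with highest weight vector $v_\tau$ and highest weight $\mut\restr{\lieh^I}$. I expect this irreducibility step to be the main obstacle: it hinges on the slightly less obvious remark that an $\lies_I$-dominant weight vector of $V_I$ is automatically $\lieg$-dominant, because raising it by a simple root outside $I$ would produce a functional that is no longer a weight of $\tau$.
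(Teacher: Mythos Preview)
Your proof is correct. The paper does not supply its own proof of this lemma; it simply quotes the result from Satake \cite[Lemma 8 p.~89]{satake-compactifications}. Your argument is the standard one and matches what one finds in Satake's original paper: invariance is weight bookkeeping, and irreducibility follows because any $\lies_I$-highest weight vector in $V_I$ is automatically a $\lieg$-highest weight vector (raising by a simple root outside $I$ leaves $V_I$), hence a multiple of $v_\tau$. The only remark I would add is that you could shorten the irreducibility paragraph: once you know that every $\lies_I$-highest weight vector in $V_I$ lies in $\C v_\tau$, complete reducibility of $V_I$ immediately forces $V_I$ to be irreducible, since a nontrivial direct sum decomposition would produce two linearly independent highest weight vectors; the detour through $V'=U(\lies_I)v_\tau$ and a complement $V''$ is not needed.
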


  \begin{defin}
    If $I\subset \simple$ is $\mut$-connected, denote by $I'$ the
    collection of all simple roots orthogonal to $\{\mut\}\cup I$.
    The set $J:=I\cup I'$ is called the $\mut$-\enf{saturation} of
    $I$.
  \end{defin}
  The largest $\mut$-connected subset contained in $J$ is $I$.
  \begin{lemma}
    [{\cite[Prop. I.4.29 p. 70]{borel-ji-libro}}]
    \label{stab-VI}
    If $I$ is $\mut$-connected, then $ P_J=\{g\in G: \tau(g)V_I
    \subset V_I\}$.
  \end{lemma}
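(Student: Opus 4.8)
The plan is to prove the two inclusions separately. First I would show that $P_J$ stabilizes $V_I$. By the previous lemma (Lemma \ref{lemmetto-di-Satake-taui}), the group $S_I$ — and indeed $S_J$, since $J \supset I$ — is relevant here, but more directly: $V_I = \bigoplus_{\supp(\lam)\subset I} V_\lam$ is a sum of weight spaces. An element $g \in P_J$ has Lie algebra $\liep_J = \liu_J \oplus \lieh_J \oplus \lies_J$. I would check that each of the three summands preserves $V_I$ under the infinitesimal representation: $\lieh$ obviously preserves each weight space; $\lies_J$ preserves $V_I$ because $\lies_J = \lies_I$ on the relevant part (the roots in $I'$ are orthogonal to $\mut$ and to $I$, so $\lieg_{\pm\alpha}$ for $\alpha \in I'$ shifts weights by vectors that cannot move a weight with support in $I$ out of the class of weights with support in $I$ — one must check that adding $\pm\alpha$ with $\alpha\in I'$ to a weight $\lam$ with $\supp(\lam)\subset I$ either gives a non-weight or again has support in $I$); and $\liu_J = \bigoplus_{\alpha \in \root_+ \setminus \root_J}\lieg_\alpha$ raises weights, and a weight $\mut - \sum c_\beta\beta$ with $\supp \subset I$, once raised by a positive root not in $\spam(J)$, either ceases to be a weight or... — here one uses that $\mut$ is the highest weight, so raising too far kills the vector, and the bookkeeping on supports closes. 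This gives $\Lie(P_J) \cdot V_I \subset V_I$, hence $P_J \cdot V_I \subset V_I$ since $P_J$ is connected.

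For the reverse inclusion I would argue that $Q := \{g \in G : \tau(g)V_I \subset V_I\}$ is a parabolic subgroup containing $\liebp$ (it contains $B$ because $\liebp$ raises weights and $V_I$ contains the highest weight line and is closed under raising within the support-$I$ locus), hence $Q = P_{I''}$ for a unique $I'' \subset \simple$; it remains to identify $I'' = J$. One inclusion, $J \subset I''$, is exactly the forward inclusion just proved. For $I'' \subset J$, suppose $\alpha \in \simple$ with $\lieg_{-\alpha} \subset \Lie(Q)$, i.e. $\tau(\lieg_{-\alpha})V_I \subset V_I$; I must show $\alpha \in J = I \cup I'$, equivalently that if $\alpha \notin I$ then $\alpha \perp \{\mut\}\cup I$. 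This is where the real content sits: applying $\tau(\lieg_{-\alpha})$ to the highest weight vector $v_\tau \in V_I$ gives (if nonzero) a vector of weight $\mut - \alpha$ with support $\{\alpha\}$, which lies in $V_I$ only if $\alpha \in I$; so if $\alpha \notin I$ then $\tau(\lieg_{-\alpha})v_\tau = 0$, which by $\su(2)$-theory for the root $\alpha$ forces $\mut(H_\alpha) = B(\mut,\alpha) = 0$. Then one must further propagate this to get orthogonality with all of $I$, not just with $\mut$: for $\beta \in I$, pick a weight $\lam$ with $\supp(\lam) = I$ (which exists by Satake's Lemma [Lemma 5], since $I$ is $\mut$-connected) and examine $\tau(\lieg_{-\alpha})$ applied to a weight vector in $V_\lam$, showing the result escapes $V_I$ unless $\alpha$ is orthogonal to the relevant roots; carefully chosen weights pin down $\alpha \perp \beta$ for every $\beta\in I$.

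The main obstacle I anticipate is the second inclusion, specifically the combinatorial weight-support argument showing that $\tau(\lieg_{-\alpha})V_I \subset V_I$ for $\alpha \notin J$ leads to a contradiction — one needs to produce, for each simple root $\alpha \notin J$, an explicit weight $\lam$ of $\tau$ with $\supp(\lam)\subset I$ such that $\lam + \alpha$ is again a weight (using that $\mut$ is dominant and $\alpha$ is not orthogonal to $\{\mut\}\cup I$, hence appears with positive coefficient somewhere in the $\alpha$-string through a suitable weight), and then $\tau(\lieg_\alpha)$ applied to a weight vector of weight $\lam$ has a nonzero component in $V_{\lam+\alpha} \not\subset V_I$; but this is about $\lieg_\alpha$ raising, whereas membership in $Q$ a priori controls only $\lieg_{-\beta}$ for simple $\beta$, so one instead works with $\tau(\lieg_{-\alpha})$ on a vector of weight $\mut - (\text{something})$ whose support would be enlarged by subtracting $\alpha$. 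Managing which direction (raising vs. lowering) and which weight to test against, so that the non-stabilization is visibly witnessed inside $V$, is the delicate point; I would lean on Lemma \ref{lemmetto-di-Satake-taui} and the explicit description $V_I = \bigoplus_{\supp(\lam)\subset I}V_\lam$ throughout, and on the fact that $Q$, being a subgroup-stabilizer of a subspace containing a highest weight line, is automatically parabolic so only the index set needs to be determined.
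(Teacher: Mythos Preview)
The paper does not actually prove this lemma: it is stated with a citation to \cite[Prop.~I.4.29 p.~70]{borel-ji-libro} and no argument is given. So there is nothing in the paper to compare your proposal against.

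That said, your plan is essentially correct and would yield a proof. The forward inclusion $P_J\subset Q:=\{g:\tau(g)V_I\subset V_I\}$ goes through as you outline; the key observation you gesture at---that for $\alpha\in I'$ every weight vector in $V_I$ has $\alpha$-weight zero and is annihilated by $\lieg_{\alpha}$, hence generates a trivial $\mathfrak{sl}_2$-module and is also annihilated by $\lieg_{-\alpha}$---is exactly right. For the reverse inclusion your identification of the difficulty is accurate, but the mechanism you are groping for can be made cleaner than the weight-by-weight testing you describe. Once you know that a simple $\alpha\notin I$ has $\lieg_{-\alpha}\cdot V_I\subset V_I$, the support argument forces $\lieg_{-\alpha}\cdot V_I=0$; and since $\lieg_{\alpha}\cdot V_I=0$ automatically (raising by $\alpha\notin I$ leaves the weight cone), the coroot $h_\alpha=[e_\alpha,f_\alpha]$ acts by zero on $V_I$. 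Hence $B(\lambda,\alpha)=0$ for \emph{every} weight $\lambda$ of $V_I$. Now the differences $\mu_\tau-\lambda$ span $\spam_\R(I)$ (this is where $\mu_\tau$-connectedness of $I$ and Satake's Lemma~5 enter: walking along a chain of simple roots in $I$ toward one non-orthogonal to $\mu_\tau$ produces weights whose supports exhaust $I$), so $\alpha\perp I$ and $\alpha\perp\mu_\tau$, i.e.\ $\alpha\in I'\subset J$. This replaces your ``carefully chosen weights'' with a single linear-span argument and avoids the raising/lowering confusion you flag at the end.
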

  Fix a $\mut$-connected subset $I$. If $A\in \End (V_I)$, let
  $A\oplus 0$ denote the extension of $A$ that is trivial on
  $V_I^\perp$.  If $\pi_I: V\ra V_I$ denotes orthogonal projection and
  $j_I: V_I \hookrightarrow V$ denotes the inclusion, then $A\oplus 0
  =j_I\circ A\circ \pi_I$ and the map
  \begin{gather}
    \label{eq:def-psi}
    \psi : \PP(\Herm (V_I)) \ra \PP(\Herm(V)) \qquad \psi([A]) =
    ([A\oplus 0])
  \end{gather}
  embeds $\Pos(V_I) $ in $\overline{\Pos(V)}$. Note that $K_I=S_I \cap
  K$ is a maximal compact subgroup of $S_I$ and $S_I$ is a semisimple
  complex Lie group. Therefore $X_I=S_I/K_I$ is again a symmetric
  space of type IV.  Hence we have a map $i_{\tau_I} : X_I \ra
  \Pos(V_I)$ defined as in in \eqref{eq:imbeddoinpos}.  Finally define
  \begin{gather*}
    i_I=\psi \circ i_{\tau_I} : X_I \ra \overline{\Pos(V)}.
  \end{gather*}
  \begin{teo}
    [{\cite[Cor. I.4.32]{borel-ji-libro}}]\label{Satakone}
    \begin{gather*}
      \XS = \bigsqcup_{\text{$\mut$-connected $I$}} G .\, i_I(X_I).
    \end{gather*}
  \end{teo}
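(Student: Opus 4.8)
The plan is to reconstruct Satake's argument: reach every point of $\XS$ along a polar ray $\exp(tH)K$ and analyse the limit through the weight-space decomposition of $\tau$.

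\emph{Step 1 (polar coordinates).} Since $\theta\restr{K}=\Id$, \eqref{eq:tau-theta} makes $\tau(k)$ unitary for $k\in K$; since $\theta(H)=-H$ for $H\in\lia$, it makes $\tau(\exp H)$ positive self-adjoint, diagonal in a weight basis with eigenvalue $e^{\lambda(H)}$ on $V_\lambda$. Using the Cartan decomposition $G=K\exp(\overline{\lia^+})K$ and the $K$-equivariance of $i_\tau$ for the conjugation action, $g=k_1\exp(H)k_2$ gives $i_\tau(gK)=[\tau(k_1)\,\tau(\exp(2H))\,\tau(k_1)^*]$. Hence every $\xi\in\XS$ is a limit $\lim_n[\tau(k_n)\tau(\exp(2H_n))\tau(k_n)^*]$ with $k_n\in K$, $H_n\in\overline{\lia^+}$. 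Passing to a subsequence, assume $k_n\to k$; the eigenvalue ratios $e^{2(\lambda-\mut)(H_n)}$ all lie in $(0,1]$, the largest being $1$ at $\lambda=\mut$ (because $\mut$ is the highest weight and $H_n$ is dominant), so assume also $e^{2(\lambda-\mut)(H_n)}\to c_\lambda\in[0,1]$ for every weight $\lambda$.

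\emph{Step 2 (the limit lies in some $G\cdot i_I(X_I)$).} Write $\lambda-\mut=-\sum_\alpha m_\alpha\alpha$ with $m_\alpha\ge0$; since $\alpha(H_n)\ge0$, the sum $\sum_{\alpha\in\supp\lambda}m_\alpha\alpha(H_n)$ stays bounded exactly when each $\alpha(H_n)$, $\alpha\in\supp\lambda$, does, so $c_\lambda>0$ iff $\supp\lambda\subset I$, where $I:=\{\alpha\in\simple:(\alpha(H_n))_n\ \text{bounded}\}$. Thus the renormalised $\tau(\exp(2H_n))$ (divide by $e^{2\mut(H_n)}$) converges to an operator supported on $V_I=\bigoplus_{\supp\lambda\subset I}V_\lambda$. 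Splitting $H_n=Z_n+H'_n$ along $\lieh=\lieh_I\oplus\lieh^I$, the roots in $I$ kill $\lieh_I$, so $(\lambda-\mut)(H_n)=(\lambda-\mut)(H'_n)$ whenever $\supp\lambda\subset I$, and $(H'_n)$ is bounded; pass to a further subsequence with $H'_n\to H_\infty\in\lia\cap\lieh^I\subset\lies_I$. Since $\tau(S_I)$ preserves $V_I$, giving the representation $\tau_I:S_I\ra\Gl(V_I)$ of Lemma~\ref{lemmetto-di-Satake-taui}, one reads off $\xi=k\cdot\psi\bigl(i_{\tau_I}(\exp(H_\infty)K_I)\bigr)=k\cdot i_I(\exp(H_\infty)K_I)$, so $\xi\in G\cdot i_I(X_I)$ as $\XS$ is $G$-stable (it is the closure of a $G$-orbit). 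Finally $\supp\lambda$ is always $\mut$-connected (\cite[Lemma~5]{satake-compactifications}) and $\mut$-connected subsets are closed under union, so replacing $I$ by its largest $\mut$-connected subset $\hat I$ does not change $V_I$, while the simple ideals of $\lies_I$ outside $\lies_{\hat I}$ act trivially on $V_I$; hence $i_I(X_I)=i_{\hat I}(X_{\hat I})$, and $\XS\subset\bigcup_{I\ \mut\text{-connected}}G\cdot i_I(X_I)$.

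\emph{Step 3 (reverse inclusion and disjointness).} Given $\mut$-connected $I$, pick $Z\in\lia$ with $\alpha(Z)=0$ on $I$ and $\alpha(Z)>0$ off $I$; then $Z\in\lieh_I$ commutes with $\lies_I$, the top eigenvalue of $\tau(\exp(2tZ))$ sits on $V_I$ (where $\tau(\exp(2tZ))$ is scalar), and for $s\in S_I$ one gets $i_\tau(s\exp(tZ)K)=[\tau(s)\tau(\exp(2tZ))\tau(s)^*]\to[\tau(s)\,j_I\pi_I\,\tau(s)^*]=\psi(i_{\tau_I}(sK_I))=i_I(sK_I)$ as $t\to+\infty$, using that $\tau(S_I)$ preserves $V_I$. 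Thus $i_I(X_I)\subset\XS$, and $G\cdot i_I(X_I)\subset\XS$ by $G$-stability. For disjointness, attach to $[A]\in\overline{\Pos(V)}$ the subspace $\im A\subset V$; this assignment is $G$-equivariant and equals $V_I$ on $i_I(X_I)$. So if $G\cdot i_I(X_I)$ meets $G\cdot i_{I_1}(X_{I_1})$, then $V_I,V_{I_1}$ are $G$-conjugate subspaces, hence by Lemma~\ref{stab-VI} their stabilisers $P_J,P_{J_1}$ ($J,J_1$ the $\mut$-saturations) are conjugate; two standard parabolics are conjugate only if equal, so $J=J_1$, and as $I,I_1$ are the largest $\mut$-connected subsets of $J,J_1$ we get $I=I_1$. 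Combining the two inclusions with disjointness yields $\XS=\bigsqcup_{I\ \mut\text{-connected}}G\cdot i_I(X_I)$.

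I expect the main difficulty to be the combinatorial core of Step~2 — verifying that the weight spaces surviving the limit are exactly those with $\supp\lambda\subset I$ and that the whole picture descends to a $\mut$-connected parameter, which rests on Satake's analysis of weight supports. The bookkeeping with subsequences and the $\lieh_I\oplus\lieh^I$ splitting of $H_n$, and recognising the limit as a genuine point of $i_{\tau_I}(X_I)$ rather than merely of $\Pos(V_I)$, should then be routine, and the disjointness argument above is short once Lemma~\ref{stab-VI} is available.
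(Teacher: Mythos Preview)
The paper does not supply its own proof of this theorem: it is quoted from \cite[Cor.~I.4.32]{borel-ji-libro} and stated without argument. Your reconstruction is the classical Satake argument (Cartan decomposition, weight-space analysis of limits along polar rays, reduction to a $\mut$-connected parameter), which is exactly what underlies the cited reference, and it is essentially correct.

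Two small points you could tighten. In Step~2, to make $I:=\{\alpha\in\simple:(\alpha(H_n))_n\ \text{bounded}\}$ well defined you should first pass to a subsequence along which each $\alpha(H_n)$ converges in $[0,+\infty]$ (there are finitely many simple roots, so this is harmless); this also makes the equivalence $c_\lambda>0\iff\supp\lambda\subset I$ immediate. In the reduction from $I$ to its largest $\mut$-connected subset $\hat I$, the factors of $\lies_I$ outside $\lies_{\hat I}$ act on $V_I=V_{\hat I}$ not literally trivially but by scalars (for $\supp\lambda\subset\hat I$ and $\beta\in I\setminus\hat I$ one has $(\lambda-\mut)(H_\beta)=0$ since $\hat I\perp(I\setminus\hat I)$), which is what you need projectively. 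These are cosmetic; the structure of your proof matches the standard one.
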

  If $I=\simple$ then $i_I(X_I) = X$. The sets $g.i_I(X_I)$ with $g\in
  G$ and $I\subsetneq \simple$ are called \emph{boundary components}.

\begin{lemma}
  [{\cite[Prop. I.4.29]{borel-ji-libro}}] \label{Satakotto} The
  boundary components are disjoint: if $I$ is $\mut$-connected and
  $g\in G$ then $g.\, i_I(X_I) \cap i_I(X_I) \neq \emptyset$ if and
  only if $g.\, i_I(X_I) = i_I(X_I) $ if and only if $g\in P_J$.
\end{lemma}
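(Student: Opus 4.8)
The plan is to reduce everything to Lemma \ref{stab-VI}, which identifies $P_J$ as the full stabilizer of the subspace $V_I$ inside $\overline{\Pos(V)}$ under the $G$-action by conjugation (extended to the closure $\overline{\Pos(V)}$ via the embedding $\psi$). The key observation is that the boundary component $i_I(X_I)$ sits inside $\psi(\Pos(V_I))$, whose points are exactly the classes $[A\oplus 0]$ with $A>0$ on $V_I$; in particular the image subspace of such an operator is precisely $V_I$. So if $g\in G$ and $g\cdot i_I(X_I)$ meets $i_I(X_I)$, pick a point $p=[A\oplus 0]$ in the intersection with $A>0$ on $V_I$. Both $p$ and $g^{-1}\cdot p$ (which also lies in $i_I(X_I)$, being a preimage of a point in the intersection) have image $V_I$. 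Since conjugation by $\tau(g)$ sends the operator with image $V_I$ to one with image $\tau(g)V_I$, the equality of images forces $\tau(g)V_I = V_I$, hence $g\in P_J$ by Lemma \ref{stab-VI}.

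Conversely, if $g\in P_J$, then $\tau(g)$ preserves $V_I$; I would check that conjugation by $\tau(g)$ maps $\psi(\Pos(V_I))$ to itself and restricts, on that slice, to the action of $g$ (or rather its image in $S_I$ modulo the centralizer) on $\Pos(V_I)$ under $i_{\tau_I}$. Here one uses that $P_J = U_J \rtimes (H_J \cdot S_I)$ up to finite data via the Chevalley decomposition recalled after \eqref{eq:sottoalgebre-1}: the unipotent part $U_J$ and the central torus part $H_J$ act trivially on the relevant slice (the central part acts by a scalar on $V_I$, which is killed in $\PP(\Herm(V))$; the unipotent part $U_J$, being generated by root groups orthogonal to or strictly lowering with respect to the weights of $\tau_I$, also fixes $[A\oplus 0]$ — this needs the description of $V_I$ as $\bigoplus_{\supp\lam\subset I}V_\lam$). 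Meanwhile the semisimple part $S_I$ acts on $\Pos(V_I)$ transitively modulo $K_I$, hence sends $i_I(X_I)$ onto itself. Combining, $g\cdot i_I(X_I) = i_I(X_I)$, which gives the remaining implication and also closes the cycle of equivalences, since $g\cdot i_I(X_I)=i_I(X_I)$ trivially implies the intersection is nonempty.

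The main obstacle I anticipate is the careful verification that the unipotent radical $U_J$ acts trivially on the slice $\psi(\Pos(V_I))$: one must argue that for $u\in U_J$, conjugation by $\tau(u)$ fixes every operator of the form $A\oplus 0$ with $A$ acting on $V_I$. The natural approach is to decompose $\tau(u) = \Id + N$ with $N$ strictly increasing the weight filtration (using that root vectors in $\liu_J$ move $V_\lam$ into weight spaces $V_{\lam+\beta}$ with $\supp(\lam+\beta)\not\subset I$), show $N$ annihilates $V_I$ and $N^* $ annihilates $V_I$ as well after composing with the projection, and conclude $\tau(u)(A\oplus 0)\tau(u)^* = A\oplus 0$. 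This is the computational heart; the rest is bookkeeping with the Chevalley decomposition and Lemma \ref{stab-VI}. Everything else — that distinct boundary components with the \emph{same} $I$ but different cosets $gP_J$ are disjoint, and that the three conditions are mutually equivalent — then follows formally from this slice analysis together with the disjointness of the slices $g\psi(\Pos(V_I))$ for $g$ ranging over $G/P_J$.
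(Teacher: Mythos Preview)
The paper does not give its own proof of this lemma: it is stated with a direct citation to \cite[Prop.~I.4.29]{borel-ji-libro} and no argument is supplied. So there is nothing in the paper to compare your proposal against.

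Your plan is essentially correct, but one detail needs fixing. You write the Chevalley decomposition as $P_J = U_J \rtimes (H_J \cdot S_I)$; this omits the factor $S_{I'}$ coming from the simple roots in $J\setminus I$. The Levi part of $P_J$ has semisimple piece $S_J$, and $\lies_J = \lies_I \oplus \lies_{I'}$ since $I$ and $I'$ are orthogonal. Fortunately the missing factor causes no trouble: by the very definition of $I'$ (Prop.~\ref{W-I}\ref{W-I-saturo} and the discussion before it), $S_{I'}$ acts trivially on $V_I$, so the same adjoint trick you use for $U_J$ applies verbatim to $S_{I'}$, and it fixes every point of $\psi(\Pos(V_I))$.

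For the computational heart you flag, note that it is cleaner than your sketch suggests. Once you know $\tau(u)\restr{V_I}=\Id_{V_I}$ for $u\in U_J$ (this is exactly Prop.~\ref{W-I}\ref{W-I-liu_J}), you have $\tau(u)\, j_I = j_I$; taking adjoints gives $\pi_I\,\tau(u)^* = j_I^* = \pi_I$, and then
\[
\tau(u)\,(A\oplus 0)\,\tau(u)^* \;=\; \tau(u)\, j_I A\, \pi_I\, \tau(u)^* \;=\; j_I A\, \pi_I \;=\; A\oplus 0,
\]
with no need to analyse the weight filtration directly. The same identity handles $S_{I'}$ and (up to a scalar killed in $\PP(\Herm(V))$) the centre $H_J$. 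For $s\in S_I$ you should also observe that $\theta(S_I)=S_I$, so $\tau(s)^*=\tau(\theta(s^{-1}))$ again preserves $V_I$; this is what guarantees $\tau(s)$ preserves the full splitting $V=V_I\oplus V_I^\perp$ and hence that conjugation by $\tau(s)$ descends to the $S_I$-action on $\Pos(V_I)$.
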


\subsection{$\tau$-connected subspaces}
\label{tauconnessi}

We wish to interpret the construction of Satake compactifications more
intrinsically, i.e. independently of the root data.  Given a subspace
$W\subset V$, $W \neq \{ 0\}$, set
\begin{gather*}
  P_W =\{ g\in G: \tau(g) W \subset W\}\qquad \tkw = K\cap P_W \qquad
  M_W = M\cap \PP(W).
\end{gather*}
The subgroup $P_W$ is closed.
\begin{defin}
  $W$ is a \emph{$\tau$-connected subspace} if $P_W$ is parabolic and
  acts irreducibly on $W$.
\end{defin}

\begin{lemma}
  \label{torello-dentro-KW}
  If $W$ is $\tau$-connected, then $\tkw$ is connected and contains a
  maximal torus of $K$. There is a Cartan subalgebra $\lieh$ of
  $\lieg$ such that $\theta(\lieh) = \lieh$ and $\lieh \subset
  \liep_W$.
\end{lemma}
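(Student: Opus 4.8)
The plan is to deduce everything from the hypothesis that $P_W$ is parabolic together with the structure theory recalled above. The key point is that a parabolic subgroup of $G$ always contains a Borel subgroup, and every Borel subgroup contains a maximal torus of $G$; intersecting with $K$ and using that $G = K^\C$ will produce the maximal torus of $K$ inside $\tkw$.

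First I would argue that $\tkw$ contains a maximal torus of $K$. Since $P_W$ is parabolic it contains a maximal connected solvable subgroup $B$ of $G$, i.e. a Borel. Let $T_\C$ be a maximal torus of $G$ contained in $B$; its Lie algebra is a Cartan subalgebra $\lieh'$ of $\lieg$ with $\lieh' \subset \liep_W$. By conjugating (all maximal tori of $G$ are conjugate, and conjugation by $K$ preserves $K$), I can arrange that $\lieh'$ is $\theta$-stable: indeed any Cartan subalgebra of $\lieg$ can be conjugated by an element of $G$ to a $\theta$-stable one, and one checks the conjugating element can be taken so that the new Cartan is still contained in the (conjugated) parabolic $\liep_W$ — here one uses that $P_W$, being parabolic, is its own normalizer and that two Cartan subalgebras inside a given parabolic are conjugate by an element of that parabolic. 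For a $\theta$-stable $\lieh \subset \liep_W$ we then have $\lieh = \liet \oplus \lia$ with $\liet = \lieh \cap \liek$ the Lie algebra of a maximal torus $T$ of $K$, and $T \subset P_W \cap K = \tkw$.

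Next I would show $\tkw$ is connected. Here the irreducibility of the $P_W$-action on $W$ is the essential extra hypothesis, mirroring the role played by the parabolics $P_J$ in Lemma \ref{Satakotto} and Theorem \ref{Satakone}. The cleanest route: show that $P_W$ is actually one of the standard parabolics $P_J$ attached to the chosen root datum, in fact to a $\mut$-connected $I$ with $\mut$-saturation $J$, via Lemma \ref{stab-VI} — one identifies $W$ with some $V_I$ up to the $G$-action. Since the standard parabolic subgroups $P_J$ of a \emph{connected} group $G$ are connected, and $K$ is connected with $G = K^\C$, the intersection $K \cap P_J$ is connected (it is the centralizer in $K$ of a subtorus of $T$, a connected group — compare the remark after \cite[Thm. 1]{serre-Borel-Weil} about $K_0 = K \cap P$). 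Alternatively, without invoking the classification: $\tkw$ is a closed subgroup of the compact connected $K$ containing a maximal torus, and the homogeneous space $K/\tkw$ embeds $K$-equivariantly as the $K$-orbit $K\cdot[W]$ in a Grassmannian; using that this orbit is a complex (flag) submanifold — which is where irreducibility of $P_W$ on $W$ enters, forcing $K\cdot[W] = G\cdot[W]$ to be a single closed complex orbit — one sees $K/\tkw$ is simply connected, hence $\tkw$ is connected.

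The main obstacle I expect is the careful simultaneous conjugation: producing a $\theta$-stable Cartan subalgebra that is genuinely contained in $\liep_W$, not merely in some conjugate of it. This requires combining the standard fact that every Cartan subalgebra is $G$-conjugate to a $\theta$-stable one with the fact that all Cartan subalgebras of $\liep_W$ are $P_W$-conjugate, and checking the two conjugations are compatible — concretely, one fixes a $\theta$-stable Cartan $\lieh_0$, conjugates $\liep_W$ by $g$ so that $\lieh_0 \subset g\,\liep_W\,g\meno$, then uses $K$-conjugacy on the $\theta$-stable side to return; the bookkeeping that nothing is lost is the delicate part. Connectedness of $\tkw$, by contrast, should be routine once the parabolic is put in standard form.
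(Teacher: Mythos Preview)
Your conjugation argument for the $\theta$-stable Cartan has the gap you yourself flag, and you have not actually closed it. It \emph{can} be closed: since $P_W$ is $G$-conjugate to a standard parabolic $P_J$ containing a fixed $\theta$-stable Cartan $\lieh_0$, and since $G=KP_J$, the conjugating element may be taken in $K$, so $\Ad(k)\lieh_0\subset\liep_W$ remains $\theta$-stable. But the paper avoids this detour entirely by reversing the order of the argument. Because $P_W$ is parabolic, $K$ acts transitively on $G/P_W$, so $G/P_W = K/\tkw$; then \cite[Thm.~1]{serre-Borel-Weil} gives directly that $\tkw$ is the centralizer in $K$ of a torus, hence is connected \emph{and} contains a maximal torus $T$ of $K$ in one stroke. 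Setting $\lieh := \liet \oplus i\liet$ yields a $\theta$-stable Cartan subalgebra inside $\liep_W$ with no conjugation needed.

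Two further remarks. Your first route to connectedness, via Lemma~\ref{stab-VI}, is circular in the paper's logical order: the identification of $W$ with some $V_I$ (Proposition~\ref{W-I}) relies on the present lemma through Corollary~\ref{esistono-data}. Your second route, via simple connectedness of $K/\tkw \cong G/P_W$, is correct and is essentially what the Serre--Borel--Weil citation encodes; note, however, that the irreducibility of $P_W$ on $W$ plays no role here --- only parabolicity is used in the paper's proof.
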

\begin{proof}
  Since $G/P_W=K/\tkw$ by \cite[Thm. 1]{serre-Borel-Weil} $\tkw$ is
  the centralizer of a torus contained in $K$. Therefore it is
  connected and contains a maximal torus $T$.  If $\liet = \Lie T$,
  then $\lieh := \liet \oplus i \liet$ is a Cartan subalgebra with the
  required properties.
\end{proof}

\begin{cor}
  \label{esistono-data}
  For any \tauc subspace $W$ there are a root datum $\data$ and a
  subset $J\subset \simple$ such that
  \begin{gather}
    \label{eq:W-datum}
    \theta(\lieh) = \lieh \qquad \liebp \subset \liep_W \qquad
    \liep_W=\liep_J.
  \end{gather}
  ($\liep_J$ is as in \eqref{eq:sottoalgebre-1}.)  There are many
  choices for $\data$. If one choice is fixed, then $J$ is unique.
\end{cor}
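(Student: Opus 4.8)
The plan is to use Lemma~\ref{torello-dentro-KW} to produce the Cartan subalgebra $\lieh$, upgrade it to a full root datum, and then identify $\liep_W$ with one of the standard parabolics $\liep_J$. First I would invoke Lemma~\ref{torello-dentro-KW}: since $W$ is $\tau$-connected, it gives a $\theta$-stable Cartan subalgebra $\lieh$ with $\lieh\subset\liep_W$. To get a root datum I must choose a system of simple roots $\simple$ for $(\lieg,\lieh)$; the point is to choose it compatibly with $\liep_W$. Since $\liep_W$ is parabolic and contains $\lieh$, the set of roots $\root(\liep_W,\lieh):=\{\alfa\in\root(\lieg,\lieh):\lieg_\alfa\subset\liep_W\}$ is a closed subset whose symmetric part $\root(\liep_W)\cap(-\root(\liep_W))$ is the root system of a Levi subalgebra and whose complement is the set of roots in the nilradical. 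A standard fact about parabolic subalgebras containing a given Cartan is that one can pick a positive system $\root_+$ (equivalently a Borel $\liebp$) with $\liebp\subset\liep_W$: indeed, pick a generic element $\xi\in\lieh_\R$ in the closure of the Weyl chamber dual to $\liep_W$, i.e.\ with $\alfa(\xi)\ge 0$ for all $\alfa\in\root(\liep_W)$ and $\alfa(\xi)>0$ for $\alfa$ in the nilradical, then perturb it within that face to a regular element; the resulting positive system satisfies $\liebp\subset\liep_W$. This establishes the first two conditions in \eqref{eq:W-datum}.

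Next, with $\liebp\subset\liep_W$ fixed, the paragraph after \eqref{eq:sottoalgebre-1} says every parabolic subalgebra containing $\liebp$ is of the form $\liep_J$ for a \emph{unique} $J\subset\simple$ — concretely $J=\{\alfa\in\simple:\lieg_{-\alfa}\subset\liep_W\}$. This immediately gives $\liep_W=\liep_J$ with $J$ determined by $\liebp$, hence the third condition and the uniqueness of $J$ once $\data$ is fixed. The claim that there are many choices for $\data$ is clear: the stabilizer $\tkw$ (equivalently the Levi $L_W$ of $P_W$) acts on the set of Cartan subalgebras it contains and on the set of admissible positive systems (those with $\liebp\subset\liep_W$) transitively enough to produce many distinct $\data$ — e.g.\ conjugating $\lieh$ by any element of $\tkw$, or, with $\lieh$ fixed, replacing $\simple$ by $w\cdot\simple$ for $w$ in the Weyl group of the Levi. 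I would just remark this rather than belabor it, since the substance of the corollary is existence plus uniqueness of $J$.

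The main obstacle is the step asserting that a parabolic subalgebra containing a Cartan subalgebra contains a Borel subalgebra adapted to it, i.e.\ that one can choose $\simple$ with $\liebp\subset\liep_W\,(=\liep_J)$. This is where I need to be careful that the generic element $\xi$ used to define the positive system can be chosen simultaneously regular (so that it defines a Borel) and lying in the face of the closed Weyl chamber that cuts out $\liep_W$ (so that $\liep_W\supset\liebp$). The existence of such $\xi$ follows because $\liep_W$, being parabolic and containing $\lieh$, corresponds to a face of \emph{some} Weyl chamber decomposition — equivalently, $\liep_W=\liep_J$ for the root system attached to any positive system contained in it, and such positive systems exist because the set of roots not in $\liep_W$ together with suitable choices inside the Levi can always be made into a positive system. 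Once this is granted, everything else is bookkeeping with \eqref{eq:sottoalgebre-1} and the uniqueness statement quoted there. I would therefore structure the proof as: (1) get $\lieh$ from Lemma~\ref{torello-dentro-KW}; (2) choose $\simple$ so that $\liebp\subset\liep_W$, carefully; (3) read off $J$ and its uniqueness from the classification of parabolics above $\liebp$; (4) note the freedom in choosing $\data$.
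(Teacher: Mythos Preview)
Your proposal is correct and follows exactly the route the paper intends: the corollary is stated without proof immediately after Lemma~\ref{torello-dentro-KW}, and the argument is meant to be precisely (1) take the $\theta$-stable Cartan $\lieh\subset\liep_W$ from that lemma, (2) choose $\simple$ so that $\liebp\subset\liep_W$, and (3) invoke the sentence after \eqref{eq:sottoalgebre-1} (``any parabolic subalgebra containing $\liebp$ is of this form'') to get $\liep_W=\liep_J$ with $J$ unique. Your discussion of step~(2) via a regular element in the appropriate face is the standard justification and is fine; the paper simply takes this for granted.
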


Denote by $R_W$ the Zariski closure of $\tkw$. It is a complex
connected subgroup of $G$ with Lie algebra $\lier_W =\tilde{ \liek}_W
\otimes \C$.  Let $ \tsw = (R_W,R_W)$ be the commutator subgroup of
$R_W$. It is a connected complex semisimple Lie group.  Therefore it
splits as a product of simple factors $ \tsw = S_1 \times \cds \times
S_r$.  We can reorder the factors in such a way that the action of
$S_1, \lds, S_q$ on $W$ be nontrivial, while the remaing factors
$S_{q+1},\lds, S_r$ act trivially on $W$.  Set
\begin{gather}
  \label{defKW}
  \begin{gathered}
    S_W := S_1 \times \cds \times S_q \qquad
    S'_W :=  S_{q+1} \times \cds \times S_r\\
    K_W=S_W \cap K\qquad K'_W = S'_W \cap K.
  \end{gathered}
\end{gather}
$S_W$ and $S'_W$ are closed complex connected semisimple subgroups of
$G$, while $K_W$ and $K'_W$ are maximal compact subgroups of $S_W$ and
$S'_W$ respectively.  Finally denote by $ \ttauw $ and $\tau_W$ the
representations of $\tsw$ and $S_W$ on $W$ induced by $\tau$.  We
stress that all these definitions depend only on $W$.

\begin{prop}
  \label{W-I}
  Let $W\subset V$ be a $\tau$-connected subspace and let $\data$ and
  $J$ be as in \eqref{eq:W-datum}.  Then
  \begin{enumerate}
  \item \label{W-I-scomp} $\lier_W = \lieh_J \oplus \lies_J$,
    $\liez(\lier_W) =\lieh_J$, $\ltsw=\lies_J$, $\liep_J=\lier_W
    \oplus \liu_J$. In particular $R_W$ is a Levi
    subgroup \label{pagina-fattore-Levi} of $P_W$.
  \item
    \label{W-I-liu_J}
    $\tau$ is trivial on $\liu_J$.
  \item \label{W-I-irreducible} The representations $\ttauw$ and
    $\tau_W$ are irreducible.
  \item \label{W-I-Ji} If $J_1, \lds, J_r$ are the connected
    components of $J$, then $S_{J_i}$ are the simple factors of
    $\tsw$.
  \item \label{W-I-Cartan} $\lieh^{J_i}$ is a Cartan subalgebra of
    $\Lie S_i=\lies_{J_i}$ and $\lieh^J=\oplus_{i=1}^r \lieh^{J_i}$ is
    a Cartan subalgebra of $\ltsw$.
  \end{enumerate}
  Assume by reordering that $S_{J_i}$ acts nontrivially on $W$ if and
  only if $i\leq q$. Set $ I:=J_1 \sqcup \cds \sqcup J_q $, $
  I':=J_{q+1} \sqcup \cds \sqcup J_r$.  Denote by $\mut$ the highest
  weight of $\tau$ with respect to $\data$.  Then
  \begin{enumerate}
    \setcounter{enumi}{5}
  \item
    \label{W-I-ricartan}
    $\lies_W=\lies_I$, $\lies_W'=\lies_{I'}$, $\lieh^J$ is Cartan
    subalgebra of $\lies_J$, $\lieh^I$ is Cartan subalgebra of
    $\lies_W$ and $\lieh^{I'}$ is Cartan subalgebra of $\lies_W'$.
  \item \label{W-I-sistemi} $ \simple':= \{\alfa\restr{\lieh^J} :
    \alfa \in J\} $ is a system of simple roots for $(\lies_J,
    \lieh^J)$ and $ \simple'':= \{\alfa\restr{\lieh^I} : \alfa \in I\}
    $ is a system of simple roots for $(\lies_W, \lieh^I)$.
  \item $\lieh^J = \lieh^I \operp \lieh^{I'}$,
    \label{lieh-decomposition}
    $\lieh = \lieh_J \operp \lieh^I \operp \lieh^{I'}$, $\lieh_I =
    \lieh_J \operp \lieh^{I'}$, $\lieh_{I'} = \lieh_J \operp
    \lieh^{I}$.
  \item If $\vut$ is a highest weight vector of $\tau$ with respect to
    $\data$, then $\vut \in W$ and $\vut$ is also a highest weight
    vector of $\ttauw$ with respect to $(\lieh^J,\simple')$ and of
    $\tau_W$ with respect to $(\lieh^I,\simple'')$.\label {W-I-peso}
  \item If $x_0=[\vut]$ then $\tsw\cdot x_0 =S_W \cdot x_0 \subset
    M_W$; in particular $M_W\neq \vacuo$;
\label {W-I-orbite}
  \item \label{W-I-saturo} $I$ is $\mut$-connected and $J$ is its
    $\mut$-saturation.
  \item \label{W-I-V-I} $W=V_I$.
  \end{enumerate}
\end{prop}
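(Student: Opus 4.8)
The plan is to choose, as in Corollary~\ref{esistono-data}, a root datum $\data$ and a subset $J\subset\simple$ with $\liebp\subset\liep_W=\liep_J$, and then to extract everything from two facts: that $R_W$ is the \emph{standard} Levi subgroup of $\liep_J$, and that $\vut\in W$. For (a): $\tkw=K\cap P_J$ equals $K\cap L_J$ for $L_J$ the $\theta$-stable Levi, since $K\cap U_J$ is a compact subgroup of a unipotent group, hence trivial; so $\tkw$ is a maximal compact subgroup of the connected reductive complex group $L_J$, whence $R_W=L_J$ and $\lier_W=\lieh_J\oplus\lies_J$, and then $\liez(\lier_W)=\lieh_J$, $\ltsw=[\lier_W,\lier_W]=\lies_J$, $\liep_J=\liu_J\oplus\lier_W$ follow at once. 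For (b): $\liu_J$ acts on $W$ by nilpotent operators, so by Engel's theorem $W_0:=\{w\in W:\liu_J\cdot w=0\}\neq 0$; as $\liu_J$ is an ideal of $\liep_J$, $W_0$ is $\liep_J$-stable, hence equals $W$ because $P_J$ is connected and acts irreducibly on $W$. For (c): $\liu_J$ acting trivially, $W$ is irreducible under $R_W=L_J$; its central torus acts by scalars (Schur), so $\tsw=(R_W,R_W)$ acts irreducibly on $W$, and since $S'_W$ acts trivially on $W$ by construction, so does $S_W$ — i.e.\ $\ttauw$ and $\tau_W$ are irreducible.

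Parts (d)--(h) are structure theory of the standard parabolic $\liep_J$. The simple ideals of $\lies_J=\ltsw$ are the $\lies_{J_i}$, with $J_i$ the connected components of $J$, so the $S_{J_i}$ are the simple factors of $\tsw$ (this is (d)); $\lieh^{J_i}=\bigoplus_{\alfa\in J_i}\C H_\alfa$ is a Cartan subalgebra of $\lies_{J_i}$ and $\lieh^J=\bigoplus_i\lieh^{J_i}$ (this is (e)); reindexing so that $S_{J_i}$ acts nontrivially on $W$ exactly for $i\le q$ and summing the factors over $i\le q$ resp.\ $i>q$ gives $\lies_W=\lies_I$, $\lies_W'=\lies_{I'}$ with the corresponding Cartan statements (this is (f)); restriction to $\lieh^J$ (resp.\ $\lieh^I$) is injective on $\root_J$ (resp.\ $\root_I$), since these roots vanish on $\lieh_J$, and carries $J$ (resp.\ $I$) to a system of simple roots for $(\lies_J,\lieh^J)$ (resp.\ $(\lies_W,\lieh^I)$) (this is (g)); and the $B$-orthogonal decompositions in (h) follow by combining $\lieh=\lieh_J\operp\lieh^J$ and $\lieh=\lieh_I\operp\lieh^I$ with $\lieh^J=\lieh^I\operp\lieh^{I'}$ — the last because $H_\alfa\perp H_\beta$ whenever $\alfa,\beta$ lie in distinct components of $J$ — together with a dimension count.

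The substantive part is (i)--(l), and it rests on the observation that $\vut\in W$: since $W$ is $\liebp$-stable inside the irreducible $\lieg$-module $V$, writing $\liebp=\lieh\oplus\liu_\emptyset$ with $\liu_\emptyset=\bigoplus_{\alfa\in\root_+}\lieg_\alfa$, Engel's theorem gives a nonzero $w\in W$ with $\liu_\emptyset\cdot w=0$, and replacing $w$ by an $\lieh$-weight vector in the $\lieh$-stable space $\{w\in W:\liu_\emptyset\cdot w=0\}$ we see that $w$ spans the highest weight line of $V$, so $\vut\in W$. The positive root spaces of $\lies_J$ and of $\lies_W=\lies_I$ lie in $\liu_\emptyset$ and hence kill $\vut$, while $\vut$ is both an $\lieh^J$- and an $\lieh^I$-weight vector; this gives (i), with highest weights $\mut\restr{\lieh^J}$ and $\mut\restr{\lieh^I}$. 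Then $x_0=[\vut]\in M\cap\PP(W)=M_W$, and since $\tsw$ preserves $W$ while $S'_W$ fixes $\vut$, we get $\tsw\cdot x_0=S_W\cdot x_0\subset M_W$, which is (j). For (k), decompose $W\cong\bigotimes_k W_k$ as a module over $\tsw=\prod_k S_{J_k}$, every irreducible of a product being such a tensor product; by (i) the factor $W_k$ has highest weight $\mut\restr{\lieh^{J_k}}$, so $S_{J_k}$ acts trivially on $W$ iff $\mut(H_\alfa)=0$ for all $\alfa\in J_k$. Hence each connected component $J_k$ ($k\le q$) of $I$ carries some $\alfa$ with $\mut(H_\alfa)\neq 0$, so $I$ is $\mut$-connected, and each $J_k$ with $k>q$ is orthogonal to $\{\mut\}\cup I$. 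Finally, $I$ being $\mut$-connected, Lemma~\ref{lemmetto-di-Satake-taui} identifies $V_I$ with the irreducible $S_I$-submodule of $V$ generated by $\vut$; but $W$ is also an irreducible $S_I=S_W$-submodule of $V$ containing $\vut$ (by (c) and (f)), so $W=V_I$, which is (l). Then $P_W=\{g\in G:\tau(g)V_I\subset V_I\}$, which by Lemma~\ref{stab-VI} is $P_{J''}$ for $J''$ the $\mut$-saturation of $I$; since $P_W=P_J$ and standard parabolics correspond bijectively to subsets of $\simple$, $J=J''$, completing (k).

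I expect the main obstacle to be (a), precisely the assertion that $\tkw=K\cap L_J$ is a maximal compact subgroup of $L_J$ — equivalently, a dimension count identifying $R_W$ as a Levi subgroup of $P_J$, together with $R_W\supset T$ forcing it to be the standard one. The only other step carrying real content is the Engel argument giving $\vut\in W$; granted the structure of $\liep_J$ and the location of $\vut$, parts (d)--(l) reduce to bookkeeping with root data.
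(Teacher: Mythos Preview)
Your proof is correct and follows essentially the same line as the paper's: Engel's theorem for (b), Schur for (c), standard parabolic structure for (d)--(h), the tensor decomposition of $W$ over $\prod S_{J_i}$ for (k), and the identification of two irreducible $S_I$-submodules containing $\vut$ for (l). The one genuine variation is in (i): you prove $\vut\in W$ directly by applying Engel's theorem to $\liu_\emptyset$ acting on the $\liebp$-stable subspace $W$, whereas the paper starts from a highest weight vector of $\ttauw$ inside $W$ and argues, using (b), that it is killed by every $\lieg_\alfa$ with $\alfa\in\root_+$ and hence equals $\vut$; your route is slightly slicker since it does not need (b) as input. You are also more explicit than the paper in finishing (k): the paper only checks that $\mut\perp J_i$ iff $i>q$, which gives $I'\subset\{\alfa:\alfa\perp(\{\mut\}\cup I)\}$ but not obviously equality, while your appeal to Lemma~\ref{stab-VI} after establishing $W=V_I$ forces $P_J=P_{J''}$ for $J''$ the $\mut$-saturation of $I$, hence $J=J''$.
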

\begin{proof}
  \ref{W-I-scomp} The first statement follows by writing elements of
  $\lieg$ in a basis adapted to the compact form as e.g. in
  \cite[pp. 352ff]{knapp-beyond}).  The rest follows immediately.
  \ref{W-I-liu_J} Since the representation of $P_W$ on $W$ is
  irreducible and $[\liep_W , \liu_J ] \subset \liu_J$, it follows
  from Engel theorem that $U_J=\exp \liu_J$ acts trivially and that
  the representation of $R_W$ on $W$ is irreducible.
  \ref{W-I-irreducible} By Schur lemma the elements of $\exp\lieh_J$
  act as scalars and the representation $\ttauw$ is irreducible.
  Since $\tau_W(S_W) = \ttauw (\tsw)$ by the definition of $S_W$, the
  representation $\tau_W$ is irreducible as well.  \ref{W-I-Ji}-\ref
  {W-I-sistemi} follow immediately from the definitions of $ \lies_J$,
  $\lies_W$ and $\lies_W'$, see \eqref{eq:sottoalgebre-1}.
  \ref{lieh-decomposition} By construction $J=I\sqcup I'$ and $I\perp
  I'$, so $\lieh^J = \lieh^I \operp \lieh^{I'}$.  Since $\lieh=\lieh_J
  \operp \lieh^J$, the second statement follows.  Next observe that
  $\lieh_J \subset \lieh_I$, since $I\subset J$, and $I'\perp I$, so
  $\lieh^{I'} \subset \lieh_I$. Since $\lieh_J \perp \lieh^{I'}$,
  $\dim \lieh_J \oplus \lieh^{I'} = \dim \lieh - |J | + |I'| = \dim
  \lieh_I$.  This proves the third statement and the same argument
  yields the fourth.  \ref{W-I-peso} In passing from $\tau$ to
  $\ttauw$ we restrict both the group and the space.  Therefore some
  care is needed since a priori we don't know that $\vut$ belongs to
  the smaller space, i.e. $W$.  So we start by fixing a highest weight
  vector $w \in W$ of $\ttauw$ with respect to $ \lieh^J$.  Since any
  element of $\lieh_J$ acts on $W$ by scalar multiplication, $w$ is an
  eigenvector of $\lieh = \lieh_J \oplus \lieh^J$.  To check that $w$
  is a highest weigth of $\tau$ with respect to $\lieh$ and $\simple$
  we need to show that $\tau(\lieg_\alf)w=0$ for any $\alf\in
  \root_+$.  If $\alfa\in \root_+$, then $\lieg_\alf \subset \liep_W$
  so either $\lieg_\alf \subset \liu_J$ or $\lieg_\alf \subset
  \lies_W=\lies_J$. In the first case $\tau(\lieg_\alf)w=0$ since
  $\tau$ is trivial on $\liu_J$ by \ref{W-I-liu_J}.  In the second
  case $\tau(\lieg_\alf)w=0$ since $w$ is the highest weight vector of
  $\ttauw$ and $\alfa \in \root_{J,+}$.  This shows that
  $\tau(\lieg_\alf)w=0$ for any $\alf\in \root_+$ and since $\tau$ is
  irreducible we get that $w$ is also a highest weight of $\tau$, so
  we denote it by $\vut$.  In passing from $\ttauw$ to $\tau_W$ we
  only restrict the group (not the representation space!) so it is
  immediate that $\vut$ is also a highest weight vector of $\tau_W$
  with respect to $\lieh^I=\lieh^J\cap \lies_I$ and $\simple''$.  \ref
  {W-I-orbite} By the definition of $S_W$ we have $\tsw\cdot x_0 = S_W
  \cdot x_0 \subset \PP(W)$ and of course $\tsw \cdot x_0 \subset
  G\cdot x_0$. By Borel-Weil theorem $G\cdot x_0=M$ since $\vut$ is a
  highest weight vector.  \ref{W-I-saturo} We need to show that $\mut$
  is orthogonal to $J_i$ iff $i>q$.  By
  \cite[p.197]{goodman-wallach-Springer} there are irreducible
  representations $\sigma_ i : S_{i} \ra \Gl(V_i)$ for $i=1, \lds, r$,
  such that $W=V_1 \otimes \cds \otimes V_r$ and $\ttauw =
  \sigma_1\otens \cds \otens \sigma_r$.  The factor $S_i$ acts
  trivially on $W$ iff $\sigma_i$ is the trivial representation, which
  is equivalent to $\mut\restr{\lieh^{J_i}}=0$ i.e. to $\mut$ being
  orthogonal to $J_i$.  So indeed $\mut$ is orthogonal to $J_i$ iff
  $i>q$.  \ref{W-I-V-I} We just proved that $W=V_1\otimes \cds \otimes
  V_q$, $\tau_W = \sigma_1 \otens \cds \otens \sigma_q$ and $S_I=S_W$.
  So both $W$ and $V_I$ are irreducible $S_I$-submodules of $V$.
  Since $\vut$ belongs to both, they must coincide.
\end{proof}
In Theorem \ref{orbitina} we will show that equality holds in
\ref{W-I-orbite}.

\begin{cor}
  \label{MW-determina}
  If $W \subset V$ is $\tau$-connected, the linear span of $M_W$ is
  $\PP(W)$. If $W_1, W_2 \subset V$ are $\tau$-connected subspaces,
  then $W_1 \subset W_2$ if and only if $M_{W_1} \subset M_{W_2}$.
\end{cor}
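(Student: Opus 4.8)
The first assertion is immediate from Proposition \ref{W-I}. Indeed, by part \ref{W-I-orbite} we have $M_W \neq \vacuo$, and $M_W = M \cap \PP(W)$ is stable under $\tkw = K \cap P_W$. By part \ref{W-I-V-I} we know $W = V_I$, and the highest weight vector $\vut$ lies in $M_W$ by \ref{W-I-peso}. The linear span of $M_W$ is a $P_W$-invariant subspace of $W$ containing $\vut$; since $P_W$ acts irreducibly on $W$ by the definition of $\tau$-connectedness, this span must be all of $W$, so the linear span of $M_W$ is $\PP(W)$.

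For the second assertion, the implication $W_1 \subset W_2 \Rightarrow M_{W_1} \subset M_{W_2}$ is trivial, since $M_{W_i} = M \cap \PP(W_i)$. Conversely, suppose $M_{W_1} \subset M_{W_2}$. Then $\PP(W_1) = \operatorname{span} M_{W_1} \subset \operatorname{span} M_{W_2} = \PP(W_2)$ by the first part, hence $W_1 \subset W_2$.

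I expect no real obstacle here: the whole content is packaged in Proposition \ref{W-I}, and the only point requiring a moment's thought is that the linear span of $M_W$ — a priori just the span of a subvariety — is forced to be $W$ by irreducibility of the $P_W$-action, which is exactly what it means for $W$ to be $\tau$-connected. One should perhaps remark explicitly that $\operatorname{span}(M_{W_1}) \subset \operatorname{span}(M_{W_2})$ as projective subspaces translates to the inclusion $W_1 \subset W_2$ of the underlying linear subspaces, but this is routine.
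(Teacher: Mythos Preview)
Your proof is correct and takes essentially the same approach as the paper: both use irreducibility to force the span of $M_W$ to be all of $W$, and the second assertion follows immediately. The paper phrases the first part via the closed orbit $S_W\cdot x_0 \subset M_W$ of the irreducible representation $\tau_W$, whereas you use the $P_W$-invariance of $M_W$ together with the defining irreducibility of the $P_W$-action on $W$ --- a minor and equally valid variant.
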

\begin{proof}
  $M_W$ is a nondegenerate subvariety of $\PP(W)$ since it contains
  the closed orbit $S_W\cdot x_0$ of the irreducible representation
  $\tau_W$. The result follows.
\end{proof}

\begin{prop}
  Let $\data$ be a root datum and let $\mut$ be the highest weight of
  $\tau$. Fix a $\mut$-connected subset $ I\subset \simple$ and $g\in
  G$.  Then
  \begin{enumerate}
  \item [(a)] $W=\tau(g) V_I$ is a $\tau$-connected subspace.
  \item[(b)] There is some $a\in K$ such that $W=\tau(a)V_I$.
  \item[(c)] $ P_W= g P_J g\meno$, $S_W = g S_I g \meno$ and $\tsw = g
    S_J g\meno$, where $J$ is the $\mut$-saturation of $I$.
  \end{enumerate}
\end{prop}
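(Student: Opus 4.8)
The plan is to deduce everything from Lemma~\ref{stab-VI}, Lemma~\ref{lemmetto-di-Satake-taui}, the Iwasawa decomposition, and Proposition~\ref{W-I} applied to the subspace $V_I$ itself. I would start with (a) and the first equality of (c). Since $\tau(h)$ is invertible and $W$ is finite dimensional, the inclusion $\tau(h)W\subset W$ is equivalent to $\tau(h)W=W$; hence Lemma~\ref{stab-VI} says $P_J=\{h\in G:\tau(h)V_I=V_I\}=P_{V_I}$, and for $W=\tau(g)V_I$ one gets $h\in P_W\iff\tau(g\meno hg)V_I=V_I\iff g\meno hg\in P_J$, that is $P_W=gP_Jg\meno$, which is parabolic. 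The linear isomorphism $\tau(g)\colon V_I\to W$ intertwines the $P_J$-action on $V_I$ with the $P_W$-action on $W$, so the second is irreducible as soon as the first is; and since $I\subset J$ forces $\lies_I\subset\liep_J$, hence $S_I\subset P_J$, already $S_I$ acts irreducibly on $V_I$ by Lemma~\ref{lemmetto-di-Satake-taui}. This settles (a) and the equality $P_W=gP_Jg\meno$.

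For (b): since $\liebp\subset\liep_J$, the Iwasawa decomposition gives $G=KP_J$; writing $g=ap$ with $a\in K$ and $p\in P_J$, the factor $p$ preserves $V_I$, so $W=\tau(a)V_I$. By (b), for the last two equalities in (c) one may assume $g\in K$ (the groups $S_W$ and $\tsw$ depend only on $W$). Then $\tkw=K\cap P_W=K\cap gP_Jg\meno=g(K\cap P_J)g\meno=g\,\tilde K_{V_I}\,g\meno$; passing to Zariski closures and then to commutator subgroups gives $R_W=gR_{V_I}g\meno$ and $\tsw=g\,\tilde S_{V_I}\,g\meno$, and since conjugation by $g$ matches the simple factors of $\tilde S_{V_I}$ with those of $\tsw$ and preserves the property of acting nontrivially on the defining subspace, also $S_W=gS_{V_I}g\meno$. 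Thus it suffices to show $\tilde S_{V_I}=S_J$ and $S_{V_I}=S_I$, and for this I would apply Proposition~\ref{W-I} to the $\tau$-connected subspace $V_I$, with the datum $\data$ and the set $J$ (for which \eqref{eq:W-datum} holds, since $P_{V_I}=P_J$): part~\ref{W-I-scomp} gives $\tilde{\lies}_{V_I}=\lies_J$, whence $\tilde S_{V_I}=S_J$ (both connected, by Lemma~\ref{torello-dentro-KW}, with Lie algebra $\lies_J$); part~\ref{W-I-ricartan} gives $\lies_{V_I}=\lies_I$, whence $S_{V_I}=S_I$ --- once the subset called ``$I$'' inside Proposition~\ref{W-I}, namely the union of the connected components of $J$ not orthogonal to $\mut$, is identified with the $I$ we started from.

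The main obstacle is precisely that last identification. The subgroups $S_W$ and $\tsw$ are produced by an abstract recipe --- Zariski closure of $\tkw$, commutator subgroup, then sorting the simple factors by their action on $W$ --- while $S_I\subset S_J$ are prescribed by explicit root data; Proposition~\ref{W-I} performs essentially all of the reconciliation, leaving a purely combinatorial point: the union of the connected components of $J$ not orthogonal to $\mut$ must equal $I$, which is the assertion that $I$ is the \emph{largest} $\mut$-connected subset of its $\mut$-saturation $J$. Everything else is formal manipulation of the equivariance $\tau(g)\tau(h)\tau(g)\meno=\tau(ghg\meno)$, together with the facts that taking Zariski closures and commutator subgroups commutes with conjugation by a fixed element and preserves connectedness.
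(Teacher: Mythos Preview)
Your approach is essentially the paper's own: establish first that $V_I$ is $\tau$-connected with $P_{V_I}=P_J$, $S_{V_I}=S_I$, $\tsw=S_J$ via Lemmata~\ref{lemmetto-di-Satake-taui}, \ref{stab-VI} and Proposition~\ref{W-I}, then transport everything by $\tau(g)$ and use $G=KP_J$ for~(b). You are in fact considerably more explicit than the paper (which dispatches~(c) with ``follows immediately''), and your identification of the one nontrivial point --- that the $\mut$-connected $I$ you start from coincides with the union of the components of its saturation $J$ not orthogonal to $\mut$ --- is exactly right and is supplied by Proposition~\ref{W-I}\ref{W-I-saturo}.

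There is, however, a genuine gap in your reduction ``for the last two equalities in~(c) one may assume $g\in K$''. Your justification is that $S_W$ and $\tsw$ depend only on $W$; but the \emph{right-hand sides} $gS_Ig\meno$ and $gS_Jg\meno$ depend on $g$, and if $g=ap$ with $p\in P_J$ there is no reason for $p$ to normalise $S_I$ or $S_J$. Concretely: take $G=\Sl(3,\C)$ with the standard representation, $I=\vacuo$, so $V_I=\C e_1$, $J=\{\alpha_2\}$, and $S_J$ is the $\Sl(2)$ in the lower $2\times2$ block. For $g=I+E_{12}\in U_J\subset P_J$ one has $W=\tau(g)V_I=V_I$, hence $\tsw=S_J$; but $gE_{23}g\meno=E_{23}+E_{13}\notin\lies_J$, so $gS_Jg\meno\neq S_J=\tsw$. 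Thus the last two equalities in~(c), read literally for arbitrary $g\in G$, are false. What your argument (and the paper's) actually establishes is the version with $a\in K$ in place of $g$, which is the form used in the Corollary immediately following and is all that is needed later. So the strategy is sound; only the reduction step as you phrased it does not do what you claim, and cannot, because the target identity is not true in that generality.
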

\begin{proof}
  It follows from Lemmata \ref {lemmetto-di-Satake-taui} and
  \ref{stab-VI} and from the analysis in the proof of Proposition
  \ref{W-I} that $V_I$ is a $\tau$-connected subspace and that
  $S_{V_I}=S_I$, $P_{V_I}=P_J$. If $W=\tau(g)V_I$ then
  $P_W=gP_Jg\meno$ so in particular $P_W$ is parabolic.  Since
  $G=KP_J$, $g=ap$ for some $a\in K$, $p\in P_J$. Therefore $W=\tau(a)
  V_I$ and $ P_W = a P_J a\meno$ acts irreducibly on $W$, so $W$ is
  $\tau$-connected.  This proves (a) and (b). (c) follows immediately.
\end{proof}

\begin{cor}
  Let $\data$ be a root datum and let $\mut$ be the highest weight of
  $\tau$. A subspace $W\subset V$ is $\tau$-connected if and only if
  $W=\tau(a) V_I$ for some $a\in K$ and for some $\mut$-connected
  subset $I\subset \simple$. In this case $P_W=a P_J a\meno$ and $S_W=
  a S_I a\meno$, where $J$ denotes the $\mut$-saturation of $I$.
\end{cor}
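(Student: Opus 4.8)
The plan is to deduce both implications almost directly from the preceding Proposition, the only real work being a conjugacy argument for the converse. For the ``if'' direction I would simply apply that Proposition with $g:=a$: if $W=\tau(a)V_I$ with $a\in K$ and $I\subset\simple$ a $\mut$-connected subset, then part~(a) gives that $W$ is $\tau$-connected and part~(c) gives $P_W=aP_Ja\meno$ and $S_W=aS_Ia\meno$, with $J$ the $\mut$-saturation of~$I$. Nothing else is needed here.

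For the ``only if'' direction, given a $\tau$-connected $W$ the plan is first to realize it as $V_I$ relative to \emph{some} root datum, and then to move that description onto the prescribed datum $\data$. Concretely, by Corollary~\ref{esistono-data} I would pick a root datum $(\lieh^\ast,\simple^\ast)$ and a subset $J^\ast\subset\simple^\ast$ with $\theta(\lieh^\ast)=\lieh^\ast$ and $\liep_W=\liep_{J^\ast}$; then item~\ref{W-I-V-I} of Proposition~\ref{W-I}, applied to this datum, yields $W=V_{I^\ast}$, where $I^\ast$ is the largest $\mut^\ast$-connected subset of $J^\ast$ and $\mut^\ast$ is the highest weight of $\tau$ relative to $(\lieh^\ast,\simple^\ast)$. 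It then remains to transfer this back to $\data$: both $\lieh$ and $\lieh^\ast$ are $\theta$-stable Cartan subalgebras of the form $\liet\oplus i\liet$ with $\liet$ the Lie algebra of a maximal torus of~$K$ (for $\lieh^\ast$ this is the situation produced by Lemma~\ref{torello-dentro-KW}), so --- using that all maximal tori of~$K$ are $K$-conjugate and that the Weyl group $\Weyl(\lieg,\lieh^\ast)$ is realized by $N_K(T^\ast)$ --- I can choose $a\in K$ with $\Ad(a)\lieh=\lieh^\ast$ and $\Ad(a)\simple=\simple^\ast$, that is, so that $\Ad(a)$ is an isomorphism of root data from $\data$ onto $(\lieh^\ast,\simple^\ast)$. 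From $\tau(a)\tau(X)\tau(a)\meno=\tau(\Ad(a)X)$ it follows that $\tau(a)$ carries weight spaces to weight spaces and a $\tau$-highest weight vector for $\data$ to one for $(\lieh^\ast,\simple^\ast)$, hence that it sends $\mut$ to $\mut^\ast$ and, for every $I\subset\simple$, sends $V_I$ onto the subspace attached to $\Ad(a)(I)\subset\simple^\ast$. Taking $I:=\Ad(a)\meno(I^\ast)$ I then get that $I$ is $\mut$-connected and that $\tau(a)V_I=V_{I^\ast}=W$; so $W=\tau(a)V_I$ with $a\in K$ and $I$ $\mut$-connected, and the stated formulas for $P_W$ and $S_W$ drop out of part~(c) of the preceding Proposition applied with $g:=a$.

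I expect the only delicate step to be the conjugacy in the second paragraph: one must be sure that an element of~$K$ --- not merely of~$G$ --- can be used to identify the auxiliary root datum, together with its marked highest weight, with the prescribed one. This rests on the two standard facts that maximal tori of the compact group~$K$ are $K$-conjugate and that $\Weyl(\lieg,\lieh^\ast)=N_K(T^\ast)/T^\ast$; tracking how $\tau(a)$ intertwines the various weight-space decompositions is then routine bookkeeping.
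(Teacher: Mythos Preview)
Your proposal is correct. The paper itself gives no proof for this Corollary, treating it as an immediate consequence of the preceding Proposition together with Proposition~\ref{W-I}.

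Your ``if'' direction is exactly what is intended. For the ``only if'' direction your argument is valid, but there is a slightly shorter route that avoids the explicit conjugation of root data and instead makes fuller use of part~(b) of the preceding Proposition. Namely: since $P_W$ is parabolic, it is $G$-conjugate to a standard parabolic relative to the \emph{given} datum $\data$, say $P_W=gP_{J'}g^{-1}$ for some $g\in G$ and $J'\subset\simple$. Setting $W':=\tau(g)^{-1}W$ one has $P_{W'}=P_{J'}\supset B_+$, so $\data$ itself is a $W'$-datum and Proposition~\ref{W-I}\ref{W-I-V-I} gives $W'=V_I$ with $I\subset\simple$ $\mut$-connected. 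Thus $W=\tau(g)V_I$, and now part~(b) of the preceding Proposition supplies $a\in K$ with $W=\tau(a)V_I$; part~(c) then gives the formulas for $P_W$ and $S_W$. This is essentially the same content as your conjugacy step (the Iwasawa-type decomposition $G=KP_J$ hidden in the proof of~(b) replaces your appeal to $K$-conjugacy of maximal tori and $\Weyl=N_K(T)/T$), just packaged so that you move $W$ rather than the root datum. Either way the argument is sound.
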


We can now reformulate Satake's analysis of the boundary of $\XS$.  If
$W \subset V$ is a $\tau$-connected subspace, the data $S_W, K\cap
S_W, \tau_W , \scalo $ are again of the type described at
p. \pageref{data}.  So we can set $ X_W : = S_W / K\cap S_W$ and there
is an embedding analogous to \eqref{eq:imbeddoinpos}
\begin{gather}
  i_{\tau_W} : X_W \ra \Pos(W) \qquad i_{\tau_W}(gK_W) =[ \tau_W(g)
  \tau_W(g)^*].
\end{gather}
Define
\begin{gather*}
  i_W:=\psi \circ i_{\tau_W} : X_W \ra \overline{\Pos(V)}
\end{gather*}
where $\psi$ is as in \eqref{eq:def-psi}.  Theorem \ref{Satakone} and
Lemma \ref{Satakotto} can be rephrased in the following way.

\begin{teo}\label{ziasti}
  The boundary components of $\XS$ are exatcly the subsets of $\XS$ of
  the form $\XW$ for some \tauc subspace $W \subsetneq V$, while
  $X=i_V(X_V)$. Moreover
  \begin{gather}
    \label{eq:WSatake}
    \XS = \bigsqcup_{W \text{ $\tau$-connected}} \, \XW.
  \end{gather}
\end{teo}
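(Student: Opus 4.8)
The plan is to deduce the statement directly from Satake's description of $\XS$ (Theorem~\ref{Satakone} and Lemma~\ref{Satakotto}) together with the dictionary, established above, between \tauc subspaces and pairs $(a,I)$ with $a\in K$ and $I\subset\simple$ a $\mut$-connected subset. First I would fix a root datum $\data$ with highest weight $\mut$ and prove a $G$-equivariance property: for every $\mut$-connected $I\subset\simple$ and every $g\in G$ one has $g\cdot\xii=\XW$, where $W=\tau(g)V_I$ and $J$ is the $\mut$-saturation of $I$. To prove this I would first reduce to $g\in K$: writing $g=ap$ with $a\in K$ and $p\in P_J$ (possible since $G=KP_J$), Lemma~\ref{stab-VI} gives $\tau(p)V_I=V_I$, hence $W=\tau(a)V_I$, while Lemma~\ref{Satakotto} gives $p\cdot\xii=\xii$, so $g\cdot\xii=a\cdot\xii$. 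Now for $a\in K$ the operator $\tau(a)$ is unitary, so $\tau(a)\meno=\tau(a)^*$, the subspace $V_I^\perp$ is carried onto $W^\perp$, and conjugation by $\tau(a)$ on $\PP(\Herm(V))$ intertwines the extension-by-zero map $\psi$ for $V_I$ with the one for $W$. Using $S_W=aS_Ia\meno$ from the last Proposition and $\tau_W(asa\meno)=\tau(a)\,\tau_I(s)\,\tau(a)^*$ for $s\in S_I$, a short computation based on the identity $(A\oplus 0)(A\oplus 0)^*=AA^*\oplus 0$ shows $a\cdot i_I(sK_I)=[\,\tau_W(asa\meno)\tau_W(asa\meno)^*\oplus 0\,]$; letting $s$ range over $S_I$, hence $asa\meno$ over $S_W$, gives $a\cdot\xii=\XW$. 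This short computation — and in particular the use of the unitarity of $\tau(a)$, which is what forces the reduction to $K$ — is the only real content of the argument, and I expect it to be the main (modest) obstacle.

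Next I would combine this with the last Corollary, which says that every \tauc subspace $W$ equals $\tau(a)V_I$ for some $a\in K$ and some $\mut$-connected $I\subset\simple$, and that $W=V$ precisely when $I=\simple$ (indeed $V_I\subsetneq V$ whenever $I\subsetneq\simple$, since $\tau$ has a weight with support all of $\simple$, being nontrivial on every simple factor). Together with Theorem~\ref{Satakone} this gives: the boundary components $g\cdot\xii$ with $I\subsetneq\simple$ are exactly the sets $\XW$ with $W\subsetneq V$ a \tauc subspace, while $i_\simple(X_\simple)=X$ corresponds to $W=V$ (for $W=V$ one has $P_W=G$, so $S_W=G$, $X_W=X$, $\tau_W=\tau$ and $\psi=\Id$, whence $i_V(X_V)=X$). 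This proves the first two assertions and yields $\XS=\bigcup_{W\ \tauc}\XW$.

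Finally, for the disjointness of this union I would argue as follows. Suppose $W_1,W_2$ are \tauc and $i_{W_1}(X_{W_1})\cap i_{W_2}(X_{W_2})\neq\vacuo$; write $W_k=\tau(a_k)V_{I_k}$ with $a_k\in K$ and $I_k$ $\mut$-connected. By the equivariance of the first paragraph, $a_k\cdot i_{I_k}(X_{I_k})=i_{W_k}(X_{W_k})$, so the fact that the union $\bigsqcup_I G\cdot i_I(X_I)$ in Theorem~\ref{Satakone} is disjoint forces $I_1=I_2=:I$, and then Lemma~\ref{Satakotto} forces $a_1\meno a_2\in P_J$, where $J$ is the $\mut$-saturation of $I$. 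By Lemma~\ref{stab-VI} this gives $\tau(a_1\meno a_2)V_I=V_I$, hence $W_1=\tau(a_1)V_I=\tau(a_2)V_I=W_2$. Therefore the union in \eqref{eq:WSatake} is disjoint, which completes the proof.
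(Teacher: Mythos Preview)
Your proof is correct and is precisely the fleshing-out of what the paper intends: immediately before the theorem the authors write that ``Theorem~\ref{Satakone} and Lemma~\ref{Satakotto} can be rephrased in the following way'', and give no further argument. Your equivariance computation $g\cdot\xii=\XW$ via the reduction $g=ap\in KP_J$ and the unitarity of $\tau(a)$, together with the bijection between \tauc subspaces and $K$-translates of the $V_I$ from the preceding Corollary, is exactly the translation the paper leaves implicit; the disjointness argument is likewise the straightforward consequence of Theorem~\ref{Satakone} and Lemma~\ref{Satakotto} combined with Lemma~\ref{stab-VI}.
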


\subsection{Projections and rational maps}
\label{projection-section} 
In this \S{} we will study the projection $\PP(V) \dashrightarrow
\PP(W)$ induced by the decomposition $V=W\oplus W^\perp$, where $W$ is
a \tauc subspace. Next we will interpret elements of $\XS$ as rational
self-maps of $M$.

For any $\tau$-connected subspace $W$, denote by $ \piw : V \ra W$ the
orthogonal projection and by $\Piw$ its projectivization:
\begin{gather}
  \label{eq:def-ratw}
  \Piw : \ratw \ra \PP(W) \qquad \Piw ([v]) = [\piw(v)].
\end{gather}
Both $\piw$ and $\Piw$ are $S_W$-equivariant, since the splitting $V=W
\oplus W^\perp$ is preserved by $K_W$.

Let $\data$, $\mut$, $\vut$, $x_0$, $I$ and $J$ be as in Proposition
\ref{W-I}.  Let $P$ be the stabilizer of $x_0$ and let $E\subset
\simple$ be such that $P=P_E$.  Set
\begin{gather*}
  \lium:= \bigoplus_{\alfa \in \root_- \setminus \root_E } \lieg_\alfa
  \qquad \root _{I,-} := \root_I \cap \root_- \qquad \liumi :=
  \bigoplus_{\alfa \in \root_{I,-} \setminus \root_E } \lieg_\alfa.
\end{gather*}
By construction $\liumi \subset \lium$.  Let $ \mu : \lium \ra \liumi
$ be the projection according to the root space decomposition: if $X=
\sum
X_\beta\in \lium$ with $X_\beta \in \lieg_\beta$, then
\begin{gather*}
  \mu ( X): = \sum_{\beta \in \root_{I,-} - \root_E} X_\beta.
\end{gather*}
\begin{lemma} \label{piitau} If $X\in \lium$ and $ k\in \mathbb{N}$,
  then $ \pii(\tau(X)^k \cdot v_\tau ) = \tau(\mu(X))^k\cdot v_\tau$.
\end{lemma}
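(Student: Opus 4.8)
The plan is to reduce everything to the action of $\tau(X)$ on weight spaces and track which weights can be reached. Write $X = \sum_{\beta} X_\beta \in \lium$ with $X_\beta \in \lieg_\beta$ and $\beta \in \root_- \setminus \root_E$. Applying $\tau(X)$ to $v_\tau$ (weight $\mut$) produces a sum of vectors in weight spaces $V_{\mut + \beta_1 + \cdots + \beta_k}$, where each $\beta_i \in \root_- \setminus \root_E$; iterating, $\tau(X)^k v_\tau$ is a sum over ordered $k$-tuples $(\beta_1,\ldots,\beta_k)$ of terms $\tau(X_{\beta_k})\cdots \tau(X_{\beta_1}) v_\tau$. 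The key point is to determine when such a term lies in $W = V_I$ (equivalently, by Proposition \ref{W-I}\ref{W-I-V-I} and the definition of $V_I$, when $\supp(\mut + \beta_1 + \cdots + \beta_k) \subset I$), because applying $\piw$ kills everything orthogonal to $W$.

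First I would establish the elementary fact that $\mu$ is a Lie algebra homomorphism, or more precisely that for the purposes of this computation $\mu$ behaves multiplicatively: since $\root_{I}$ is closed under addition within $\root$ and $I \subset \simple$ spans a "face," a bracket $[X_\beta, X_\gamma]$ with $\beta \in \root_{I,-}$ lies in a root space indexed by $\beta + \gamma$, and the projection $\mu$ interacts correctly. More to the point, I would argue directly on the iterated product: a term $\tau(X_{\beta_k})\cdots\tau(X_{\beta_1}) v_\tau$ has weight $\mut + \sum \beta_i$, and I claim this vector lies in $W$ if and only if every $\beta_i \in \root_{I,-}$. The "if" direction is immediate: if all $\beta_i \in \root_I$ then the weight has support in $I$, so the vector is in $V_I = W$. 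For the "only if" direction, suppose some $\beta_i \notin \root_I$; I need that the resulting weight has support not contained in $I$ — this uses that $\mut$ itself has empty support and that adding a negative root outside $\root_I$ introduces a simple root outside $I$ into the support, which cannot be cancelled by subsequent additions of negative roots (coefficients of simple roots only decrease). This monotonicity of support under subtraction of positive roots is the crux.

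Given that claim, the computation finishes cleanly: $\piw(\tau(X)^k v_\tau)$ equals the sum over those ordered tuples $(\beta_1,\ldots,\beta_k)$ with all $\beta_i \in \root_{I,-}\setminus\root_E$ of $\tau(X_{\beta_k})\cdots\tau(X_{\beta_1})v_\tau$ (the other terms are orthogonal to $W$ and die; and no term with all $\beta_i \in \root_{I,-}$ can accidentally leave $W$). But that sum is exactly the expansion of $\tau(\mu(X))^k v_\tau$, since $\mu(X) = \sum_{\beta \in \root_{I,-}\setminus\root_E} X_\beta$ and expanding the $k$-th power of $\tau(\mu(X))$ applied to $v_\tau$ gives precisely the same ordered tuples. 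I would also note the base case $k=0$ is trivial ($\piw(v_\tau) = v_\tau$ since $v_\tau \in W$ by Proposition \ref{W-I}\ref{W-I-peso}), and that an induction on $k$ can be used instead of the direct tuple expansion if one prefers: assuming $\pii(\tau(X)^{k-1}v_\tau) = \tau(\mu(X))^{k-1}v_\tau$, one applies $\tau(X)$, splits $X = \mu(X) + (X - \mu(X))$, and shows the contribution of $X - \mu(X)$ applied to a vector in $W$ lands in $W^\perp$ (again by the support argument, since $X - \mu(X)$ involves only roots outside $\root_I$).

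The main obstacle is the support-monotonicity lemma: making precise that once a simple root $\alpha \in \simple \setminus I$ enters the support of a weight $\mut - \sum c_\gamma \gamma$ reachable from $\mut$ by subtracting positive roots in $\root_+ \setminus \root_E$, it stays there, so the weight never returns to $V_I$. This is standard highest-weight combinatorics — the coefficients $c_\gamma$ are nonnegative and non-decreasing as we subtract more positive roots — but it must be stated carefully because we are subtracting roots in $\root_- \setminus \root_E$, not arbitrary negative roots, and one must be sure the relevant roots $\beta_i$ are genuinely in $\root_I$ rather than merely having the sum $\sum \beta_i$ supported in $I$ by cancellation; the non-cancellation is exactly what the lemma provides.
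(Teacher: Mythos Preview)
Your proposal is correct and follows essentially the same route as the paper: expand $\tau(X)^k v_\tau$ as a sum over ordered $k$-tuples $(\beta_1,\ldots,\beta_k)$ of root vectors, observe that each term lies in the weight space $V_{\mut+\beta_1+\cdots+\beta_k}$, and use the no-cancellation argument (all $\beta_i$ negative, so their simple-root coefficients add without cancellation) to conclude that $\supp(\beta_1+\cdots+\beta_k)\subset I$ iff every $\beta_i\in\root_I$. The paper states this last step in one line rather than as a separate ``support-monotonicity lemma,'' and it does not bother with the inductive variant or the Lie-homomorphism remark you mention, but the substance is identical.
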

\begin{proof}
  Since $\vut\in V_I=W$, $\pii(v_\tau)=\vut$ so the statement is true
  for $k=0$.  If $k>0 $ write $ X = \sum_{\beta \in \root_- - \root_E}
  X_\beta $ with $ X_\beta \in \lieg_{\beta} $.  Then
  \begin{gather*}
    \tau(X)^k \cdot v_\tau = \sum_{\beta_1, \lds , \beta_k\in \root_-
      - \root_E} \tau(X_{\beta_1} ) \cds \tau(X_{\beta_k}) \cdot
    v_\tau
  \end{gather*}
  Since $ \tau(X_{\beta_1} ) \cds \tau(X_{\beta_k})\cdot v_\tau \in
  V_{\mut + \beta_1 + \cds + \beta_k} $,
  \begin{gather*}
    \pii \Bigl (\tau(X_{\beta_1} ) \cds \tau(X_{\beta_k}) \cdot v_\tau
    \Bigr ) =
    \begin{cases}
      \tau(X_{\beta_1} ) \cds \tau(X_{\beta_k})\cdot v_\tau & \text{if
      }
      \supp( \beta_1 +\cds + \beta_k) \subset I\\
      0 & \text{otherwise}.
    \end{cases}
  \end{gather*}
  All the $\beta_i$'s are negative roots, so if one of them has
  nonzero component in the direction of some $\alfa\in I$, this
  component survives in the sum.  Hence $ \supp( \beta_1 +\cds +
  \beta_k) \subset I $ iff $\beta_i \in \root_I$ for $i=1,\lds, k$.
  Therefore
  \begin{gather*}
    \pii(\tau(X)^k \cdot v_\tau ) = \sum_{\beta_1, \lds , \beta_k\in
      \root_{I,-} - \root_E} \tau(X_{\beta_1} ) \cds
    \tau(X_{\beta_k})\cdot v_\tau .
  \end{gather*}
  On the other hand
  \begin{gather*}
    \mu(X) = \sum_{\beta \in \root_{I,-} - \root_E} X_\beta\\
    \tau(\mu(X))^k \cdot \vut = \sum_{\beta_1, \lds , \beta_k\in
      \root_{I,-} - \root_E} \tau(X_{\beta_1} ) \cds
    \tau(X_{\beta_k})\cdot v_\tau = \pii(\tau(X)^k \cdot v_\tau ).
  \end{gather*}
  This proves the lemma.
\end{proof}
Set
\begin{gather*}
  U ^- := \exp \lium \qquad \UIM := \exp \liumi.
\end{gather*}
\begin{lemma}
  \label{dominante}
  $\lium$ and $\liumi$ are nilpotent subalgebras of $\lieg$.  $U ^-$
  and $\UIM$ are closed connected algebraic subgroups of $G$ and $S_W=S_I$
  respectively.  $U ^-\cap P=\{e\}$.  The maps $ \exp : \lium \ra U ^-
  $ and $ \exp : \liumi \ra\UIM $ are diffeomorphisms.  The map $ a
  \mapsto \tau(a) \cdot x_0 $ is a dominant map of $ U^- $ to $ M $
  and its restriction to $\UIM$ is a dominant map of $\UIM$ to
  $S_W\cdot x_0$.
\end{lemma}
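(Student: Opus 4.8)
The nilpotency of $\lium$ and $\liumi$ is standard: both are sums of root spaces $\lieg_\alfa$ with $\alfa$ ranging over a subset of $\root_-$ that is closed under addition (within $\root$), hence each is a nilpotent subalgebra normalized by $\lieh$; since $\root_{I,-}\setminus\root_E \subset \root_-\setminus\root_E$ we get $\liumi\subset\lium$. That $U^-=\exp\lium$ and $\UIM=\exp\liumi$ are closed connected unipotent algebraic subgroups, with $\exp$ a diffeomorphism onto each, is the usual statement about the exponential of a nilpotent Lie algebra of strictly upper triangular type; one checks $\UIM\subset S_I$ because $\liumi\subset\lies_I$ by construction (the roots in $\root_{I,-}$ lie in $\root_I$, whose root spaces are contained in $\lies_I=\lies_W$ by \eqref{eq:sottoalgebre-1} and Proposition \ref{W-I}\ref{W-I-ricartan}). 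For $U^-\cap P=\{e\}$: if $g=\exp X\in P=P_E$ with $X\in\lium$, then conjugation by $g$ preserves $\liep_E$; but $\lium$ is a complement to $\liep_E$ in $\lieg$ on which $P_E$ acts, and the standard argument (e.g. via the big cell decomposition $U^-\times P \to G$ being injective, \cite[p.32]{ency-lie-algebras-III}) forces $X=0$.

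The substantive assertions are the two dominance statements. First I would treat $\UIM\to S_W\cdot x_0$, $a\mapsto\tau(a)\cdot x_0$. Working inside $S_W=S_I$ with the root datum $(\lieh^I,\simple'')$ of Proposition \ref{W-I}\ref{W-I-sistemi}, the point $x_0=[\vut]$ is the highest weight line, its stabilizer in $S_W$ is the parabolic $P\cap S_W$, and $\liumi$ is precisely the sum of the negative root spaces of $\lies_I$ that are \emph{not} in the Lie algebra of that parabolic. So $\UIM$ is the unipotent radical of the opposite parabolic, and the orbit map $\UIM\to S_W\cdot x_0$ is exactly the inclusion of the big Bruhat cell into the flag variety $S_W/(P\cap S_W)$: it is an open immersion, in particular dominant. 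The global statement $U^-\to M$, $a\mapsto\tau(a)\cdot x_0$, is the identical fact one level up, for $G$ acting on $M=G/P$: $\lium$ is the sum of negative root spaces outside $\liep_E$, $U^-$ is the unipotent radical of the parabolic opposite to $P$, and $U^-\to M$ is the big cell, hence an open (dense) immersion. Both are instances of the Bruhat/Birkhoff decomposition of a flag variety.

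I do not anticipate a genuine obstacle here — the lemma is a packaging of well-known facts about big cells — but the one point needing care is the compatibility between the two pictures: one must check that the parabolic $P\cap S_W$ inside $S_W$ is indeed $P_{E\cap I}$ (up to restriction of roots) so that $\liumi$ as defined via $\root_{I,-}\setminus\root_E$ really is the nilradical of the opposite parabolic \emph{in $\lies_I$}, not merely the intersection of $\lium$ with $\lies_I$. This follows because $\root_E\cap\root_I$ describes the stabilizer of $x_0$ inside $S_I$ (by Proposition \ref{W-I}\ref{W-I-peso}, $\vut\in W$ is the highest weight vector also for $\tau_W$), and then $\root_{I,-}\setminus\root_E=\root_{I,-}\setminus(\root_E\cap\root_I)$ is exactly the negative part of $\root_I$ outside that stabilizer. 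Granting this identification, the dominance of both orbit maps is immediate, and the equality $\pii(\tau(X)^k\vut)=\tau(\mu(X))^k\vut$ of Lemma \ref{piitau} shows that $\mu$ is precisely the derivative at $e$ of the projection $U^-\dashrightarrow\UIM$ compatible with $\Pii$, which is the reason this lemma is placed here.
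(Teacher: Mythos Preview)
Your proposal is correct and is precisely the standard big-cell argument the paper has in mind; in fact the paper does not give its own proof of Lemma \ref{dominante} at all but simply refers the reader to \cite[p.~51]{ottaviani-rat} or \cite[p.~68]{akhiezer-libro}, where exactly the Bruhat/open-cell reasoning you outline is carried out. Your extra care in checking that $P\cap S_W$ corresponds to $E\cap I$ so that $\liumi$ is the nilradical of the opposite parabolic inside $\lies_I$ is the right point to verify, and your justification via Proposition \ref{W-I}\ref{W-I-peso} is sound.
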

For a proof see e.g. \cite[p. 51]{ottaviani-rat} or
\cite[p. 68]{akhiezer-libro}.

\begin{teo}
  \label{orbitina}
  For any \tauc subspace $W$, $ \Pii \left(M \setminus \PP(W^\perp)
  \right ) = S_W \cdot x_0 = M_W$.
\end{teo}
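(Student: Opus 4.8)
The plan is to prove the three claimed equalities in turn: $M_W = M\cap\PP(W)$ by definition, so we must show $S_W\cdot x_0 = M_W$ and $\Pii(M\setminus\PP(W^\perp)) = S_W\cdot x_0$. We already know from Proposition \ref{W-I}\ref{W-I-orbite} that $S_W\cdot x_0 \subset M_W$, so the containment $M_W\subset S_W\cdot x_0$ and the computation of the image of $\Pii$ are what remains. Fix a root datum $\data$, the highest weight $\mut$, the highest weight vector $\vut$, the point $x_0=[\vut]$, and the sets $I$, $J$ as in Proposition \ref{W-I}, together with $E\subset\simple$ such that $P=P_E$ is the stabilizer of $x_0$.

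The first main step is to show $\Pii(M\setminus\PP(W^\perp)) \subset S_W\cdot x_0$. By Lemma \ref{dominante} the map $a\mapsto \tau(a)\cdot x_0$ from $U^-$ to $M$ is dominant, so $\tau(U^-)\cdot x_0$ is dense in $M$; since $\Pii$ is continuous on the open set $\ratw$ and $S_W\cdot x_0$ is closed in $M$ (it is the closed orbit of the irreducible representation $\tau_W$ of the semisimple group $S_W$), it suffices to treat points of the form $[\tau(\exp X)\cdot\vut]$ with $X\in\lium$ that do not lie in $\PP(W^\perp)$. Expanding $\tau(\exp X)\vut = \sum_{k\geq 0}\frac{1}{k!}\tau(X)^k\vut$ and applying Lemma \ref{piitau} termwise gives
\begin{gather*}
  \piw\bigl(\tau(\exp X)\vut\bigr) = \sum_{k\geq 0}\frac{1}{k!}\tau(\mu(X))^k\vut = \tau(\exp\mu(X))\vut,
\end{gather*}
with $\mu(X)\in\liumi$, hence $\exp\mu(X)\in\UIM\subset S_W$. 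Thus whenever the left-hand side is nonzero we get $\Pii([\tau(\exp X)\vut]) = [\tau(\exp\mu(X))\vut] \in S_W\cdot x_0$, and by density and continuity $\Pii(M\setminus\PP(W^\perp))\subset S_W\cdot x_0$. Conversely, $\Pii$ restricted to $S_W\cdot x_0$ is the identity (since $\vut\in W$ forces $\piw$ to fix every vector of $W$, and $S_W\cdot x_0\subset\PP(W)$), and $S_W\cdot x_0\subset M\setminus\PP(W^\perp)$ because a point of $\PP(W)$ cannot lie in $\PP(W^\perp)$; so $S_W\cdot x_0\subset\Pii(M\setminus\PP(W^\perp))$, giving equality of these two sets.

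It remains to upgrade $S_W\cdot x_0\subset M_W$ to an equality; equivalently, since $M_W = M\cap\PP(W)$ and $\Pii$ is the identity on $\PP(W)$, we must show $M\cap\PP(W)\subset\Pii(M\setminus\PP(W^\perp))$. If $y\in M\cap\PP(W)$ then $y\notin\PP(W^\perp)$ (again because $W\cap W^\perp=0$), so $y\in M\setminus\PP(W^\perp)$ and $\Pii(y)=y$, whence $y\in\Pii(M\setminus\PP(W^\perp)) = S_W\cdot x_0$. Therefore $M_W\subset S_W\cdot x_0$, and combined with Proposition \ref{W-I}\ref{W-I-orbite} we conclude $M_W = S_W\cdot x_0 = \tsw\cdot x_0$, completing the proof.

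The step I expect to be the main obstacle is the first containment $\Pii(M\setminus\PP(W^\perp))\subset S_W\cdot x_0$: one must be careful that passing from the dense subset $\tau(U^-)\cdot x_0$ to all of $M\setminus\PP(W^\perp)$ really is legitimate, i.e. that the fibers of $\Pii$ behave well and that $S_W\cdot x_0$ is closed, and one must justify the termwise application of Lemma \ref{piitau} to the exponential series (a finite sum, since $\tau(X)$ is nilpotent by Lemma \ref{dominante}, so this is harmless but deserves a word). Once that is in place, the reverse inclusions are formal consequences of $\vut\in W$ and $W\cap W^\perp = \{0\}$.
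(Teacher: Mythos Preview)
Your proof is correct and follows essentially the same route as the paper's own argument: both use Lemma \ref{piitau} applied termwise to the (finite) exponential series to show $\piw(\tau(\exp X)\vut)=\tau(\exp\mu(X))\vut$, then pass from the dense open cell $U^-\cdot x_0$ to all of $M\setminus\PP(W^\perp)$ by continuity of $\Pii$ and closedness of $S_W\cdot x_0$, and finally obtain $M_W\subset S_W\cdot x_0$ by observing that $\Pii$ is the identity on $\PP(W)$. Your explicit remark that $S_W\cdot x_0$ is closed (as the unique closed $S_W$-orbit in $\PP(W)$) and that the exponential series is finite because $\tau(X)$ is nilpotent makes the density step cleaner than the paper's somewhat terse ``Hence it follows from \eqref{eq:prima-prova} that \ldots''.
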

\begin{proof}
  If $X\in \lium$ then
  \begin{gather*}
    \tau(\exp X) = \sum_{k=0}^\infty \frac{\tau(X)^k}{k!}  \qquad
    \tau(\exp\mu( X)) = \sum_{k=0}^\infty \frac{\tau(\mu (X))^k}{k!}
  \end{gather*}
  and the sums are finite since $\tau(X)$ and $\tau(\mu(X))$ are
  nilpotent. Using Lemma \ref{piitau} we get
  \begin{gather*}
    \pii(\tau(\exp X)\cdot \vut)= \sum_{k=0}^\infty \frac{1}{k!}
    \pii(\tau(X)^k \cdot \vut) = \sum_{k=0}^\infty \frac{1}{k!}
    \tau(\mu(X))^k \cdot \vut = \\= \exp\tau(\mu(X)) \cdot \vut =
    \tau(\exp\mu(X)) \cdot \vut.
  \end{gather*}
  This proves that
  \begin{gather}
    \label{eq:prima-prova}
    \pii(U^-\cdot\vut ) \subset \UIM\cdot\vut .
  \end{gather}
  Since $\vut \neq 0$, $0\not \in \pii(U^-\cdot \vut) $, i.e. $
  U^-\cdot \vut \cap W^\perp = \vacuo$. Therefore $U^-\cdot x_0
  \subset\ratw$.  By Lemma \ref{dominante} $U^-\cdot x_0$ is dense in
  $G\cdot x_0$. Hence it follows from \eqref{eq:prima-prova} that
  \begin{gather}
    \label{eq:mappettonaW}
    \Pii \left(M \setminus \PP(W^\perp) \right) \subset S_W\cdot x_0.
  \end{gather}
  On the other hand $S_W$ preserves $\PP(W)$, so $S_W\cdot \vut \cap
  W^\perp = \vacuo$ and
  \begin{gather*}
    S_W\cdot x_0 = \Pii( S_W \cdot x_0) \subset \Pii\left (M \setminus
      \PP(W^\perp) \right ).
  \end{gather*}
  This proves that $ \Pii \left(M \setminus \PP(W^\perp) \right ) =
  S_W \cdot x_0 $. If $g\in S_W$, then $g\cdot x_0 \in G\cdot x_0 =M$
  and $g\cdot x_0 \in \PP(W)$ since $S_W$ preserves $W=V_I$. Hence $
  S_W \cdot x_0 \subset M\cap \PP(W)=M_W $. Conversely, if $x\in M_W$,
  then $x\not \in \PP(W^\perp)$ and $\Pii(x) =x $. But $x\in M$, so
  $\Pii(x) \in S_W \cdot x_0$ by \eqref{eq:mappettonaW}. Therefore $
  M_W \subset S_W \cdot x_0$.  This proves that $S_W\cdot x_0 = M_W$.
\end{proof}

We can now give the interpretation in terms of rational maps.  For
technical reasons that will become clear later (Theorem
\ref{infiammata}) we prefer to take a square root. Recall the
following elementary fact.
\begin{lemma}
\label{sqrt-cont}
Let $V$ be a Hermitian vector space and set $ \semv =\{A\in \Herm(V):
A\geq 0\} $. If $A\in \semv$ there is a unique
$B\in \semv $ such that $B^2=A$. Set $\sqrt{A}:=B$. Then
$\sqrt{\cdot}$ is a homeomorphis of $\semv$ onto itself.
\end{lemma}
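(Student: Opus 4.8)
The plan is to establish the three assertions — existence of the square root, its uniqueness, and the bicontinuity of $\sqrt{\cdot}$ — in turn, relying on the spectral theorem and the finite dimensionality of $\Herm(V)$. For existence, given $A\in\semv$ I would diagonalize $A=\sum_j\lam_j P_j$ in an orthonormal eigenbasis, where the $\lam_j\geq 0$ are the distinct eigenvalues of $A$ and the $P_j$ the corresponding orthogonal spectral projections; then $B:=\sum_j\sqrt{\lam_j}\,P_j$ lies in $\semv$ and satisfies $B^2=A$.

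For uniqueness, suppose $C\in\semv$ with $C^2=A$. Then $C$ commutes with $A=C^2$, so $C$ preserves each eigenspace of $A$; the restriction of $C$ to the eigenspace for $\lam_j$ is a positive semidefinite operator whose square is $\lam_j\,\Id$, and applying the spectral theorem to this restriction forces it to equal $\sqrt{\lam_j}\,\Id$. Hence $C=B$, and we may unambiguously write $B=\sqrt{A}$.

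For bicontinuity, note first that the inverse map $B\mapsto B^2$ is polynomial, hence continuous, so it remains to prove that $\sqrt{\cdot}$ itself is continuous. Here finite dimensionality enters: if $A_n\to A$ in $\semv$ and $B_n:=\sqrt{A_n}$, then the eigenvalues of $B_n$ are the square roots of those of $A_n$ and stay bounded, so $(B_n)$ is a bounded sequence in the finite-dimensional space $\Herm(V)$. Any accumulation point $B$ of $(B_n)$, say $B_{n_k}\to B$, lies in $\semv$ (which is closed) and satisfies $B^2=\lim B_{n_k}^2=\lim A_{n_k}=A$, whence $B=\sqrt{A}$ by uniqueness. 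A bounded sequence in a finite-dimensional space all of whose accumulation points coincide converges, so $B_n\to\sqrt{A}$; this gives continuity of $\sqrt{\cdot}$, and hence that it is a homeomorphism of $\semv$ onto itself.

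The only genuinely delicate point is the uniqueness step — specifically, the observation that a positive semidefinite square root must act as $\sqrt{\lam_j}\,\Id$ on each eigenspace of $A$ — together with the harmless but essential use of compactness of bounded closed subsets of $\Herm(V)$ in the continuity argument; everything else is routine.
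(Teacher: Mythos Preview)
Your proof is correct and follows essentially the same route as the paper's. The paper is terser: it skips the details of existence and uniqueness (deferring both to the spectral theorem in one sentence) and phrases the continuity argument as showing that the squaring map $q(A)=A^2$ is \emph{proper}, using $\tr A_n^*A_n=\tr A_n^2\to\tr B$ to bound the sequence rather than your eigenvalue bound---but the underlying subsequence argument is the same as yours.
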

\begin{proof}
  It is enough to prove that $q: \semv \ra \semv $, $q(A)=A^2$ is a
  homeomorphism.  $q$ is of course continuous and using the spectral
  theorem one easily proves that it is bijective.  It is enough to
  show that $q$ is proper.  Let $\{A_n\}$ be a sequence in $ \semv$
  such that $q(A_n) =A^2_n \ra B$.  Then $ \tr A_n ^* A_n = \tr A^2_n \to \tr
  B$.  Therefore $\tr A^*_n A_n$ is bounded and $A_n$ admits a
  subsequence that converges to some $A$, which necessarily lies in
  $\semv$ since $\semv$ is closed in $\Herm(V)$.
\end{proof}
\label{radice}
Let $G, K, \tau$ and $ \scalo$ be as at p. \pageref{data}.  Then
\begin{gather*}
  \Sim:=\{g\in G : \theta(g) = g\meno\}
\end{gather*}
is a submanifold of $G$, $\exp : i\liek \ra \Sim$ is a diffeomorphism
with inverse $\log : \Sim \ra i\liek$,
the map
\begin{gather*}
  g \ra \sqrt{g}:=\exp \left( \frac{\log g}{2} \right )
\end{gather*}
is a diffeomorphism of $\Sim$ onto itself and the map $\Sim\ra X$, $g\mapsto
gK$ is a diffeomorphism. Set
\begin{gather}
\label{eq:df-rho}
  \rho : G \ra \Sim \qquad \rho(g) := \sqrt{g\theta(g\meno)}.
\end{gather}
Then $a = \rho(g)\meno g \in K$ and $g= \rho(g) \cdot a$ is the polar
decomposition of $g$.
\begin{defin}
  If $p=[A]\in \XS $, let $R_p : \PP(V) \dashrightarrow \PP(V)$ be the
  rational map induced by the endomorphism $\sqrt{A}\in \semv$,
  i.e. $R_p$ is defined outside of $\PP(\ker A)$ and for $x=[v]\not
  \in \PP(\ker A)$, $R_p(x) = [\sqrt{A} v]$.  Set
\begin{gather}
\label{eq:def-ratp}
\ratp:= R_p \restr{M} : M \dashrightarrow M.
\end{gather}
\end{defin}
For this to make sense we need to show that $M$ intersects the domain
of definition of $R_p$ and that $R_p(M) \subset M$. 
\begin{lemma}
   \label{rat-map}
   Fix $g\in S_W$ and set $p=i_W(gK_W) \in \XS$.  The following hold.
  \begin{enumerate}
  \item \label{sqrat-indeterminacy} The indeterminacy locus of $R_p$
    is $\PP(W^\perp)$.
  \item \label{sqrat-2} For $x=[v]\in \PP(V) \setminus \PP(W^\perp)$
    \begin{gather*}
      R_p(x) =\left [ \sqrt{\tau(g)\tau(g)^*} \piw(v)\right].
    \end{gather*}
  \item \label{sqrat-3} $M_W \subset M\setminus \PP(W^\perp)$. 
  \item \label{sqrat-3b} If $x\in M_W$, then $R_p(x) = \rho(g) \cdot
    x$. 
  \item \label{ratiosu} 
 $R_p\left (M\setminus \PP(W^\perp)\right) = M_W$.
\item \label{sqrat-5} $\ratp$ is a map as in \eqref{eq:def-ratp} and
  $\ratp = L_{ \rho(g)} \circ \Piw$, where $L_{\rho(g)}$ denotes the
  automorphism of $M_W$ induced by $\rho(g)$.
\item \label{sqrat-6}
If $p\in \XS$, then $p\in i_W(X_W) $ iff $\im \ratp=M_W$.
  \end{enumerate}
  \begin{proof}
    \ref{sqrat-indeterminacy} Set $A=\tau_W(g) \tau_W(g)^* \oplus
    0$. Then $p=[A]$ and $R_p$ is the rational map defined by the
    endomorphism $\sqrt{A} \in \End V$.  Therefore the indeterminacy
    locus is $\PP(\ker A) = \PP(W^\perp)$.  \ref{sqrat-2} and
    \ref{sqrat-3} are obvious.  \ref{sqrat-3b} Set $g_1=\rho(g), a=
    g_1\meno g$. Then $g\theta(g\meno) = g_1^2$, $
    A=\tau_W(g)\tau_W(g)^* = \tau_W(g_1)^2$, so
    $\sqrt{A}=\tau_W(g_1)$. If $x=[v] \in M_W$, then $R_p(x) =
    [\sqrt{A} \, \piw (v) ] = [\tau_W(g_1) v] = g_1 \cdot x$.
    \ref{ratiosu} By what we just proved and Theorem \ref{orbitina}
    $R_p\left (M \setminus \PP(W^\perp) \right ) = L_{g_1} \Piw \left
      (M\setminus \PP(W^\perp)\right) = L_{g_1}(M_W)=M_W$.
    \ref{sqrat-5} By \ref{sqrat-3} $M\not \subset \PP(W^\perp)$ so
    $\ratp=R_p\restr{M}$ is a well-defined rational map and by
    \ref{ratiosu} its image is contained in $M$. The rest is just a
    restatement of \ref{ratiosu}.  \ref{sqrat-6} This follows from
    \ref{ratiosu} and Corollary \ref{MW-determina}.
  \end{proof}
\end{lemma}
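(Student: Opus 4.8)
The plan is to reduce all seven assertions to elementary linear algebra on the orthogonal splitting $V = W \oplus W^\perp$, feeding in two ingredients already available: the polar decomposition $g = \rho(g)\,a$ with $a \in K$, which via \eqref{eq:tau-theta} and the uniqueness of positive square roots (Lemma \ref{sqrt-cont}) gives $\sqrt{\tau(g)\tau(g)^*} = \tau(\rho(g))$, and Theorem \ref{orbitina}, which computes the image of the projection $\Piw$ on $M \setminus \PP(W^\perp)$.

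First I would unwind the definitions of $i_W$ and $\psi$: $p = [A]$ with $A = \tau_W(g)\tau_W(g)^* \oplus 0$, positive definite on $W$ and zero on $W^\perp$. Since $A$ does not mix the two summands, $\sqrt A = \sqrt{\tau_W(g)\tau_W(g)^*} \oplus 0$, so $\ker\sqrt A = W^\perp$; this is \ref{sqrat-indeterminacy}. For \ref{sqrat-2}, $\sqrt A$ annihilates $W^\perp$, hence $\sqrt A v = \sqrt A\,\piw(v)$, and on $W$ both $\sqrt A$ and $\sqrt{\tau(g)\tau(g)^*}$ restrict to the positive square root of $\tau_W(g)\tau_W(g)^*$ — here one uses that the $\theta$-stability of $S_W$ makes $\tau(g)\tau(g)^*$ block-diagonal for $V = W \oplus W^\perp$ with $W$-block $\tau_W(g)\tau_W(g)^*$. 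Item \ref{sqrat-3} is immediate since $\PP(W)$ and $\PP(W^\perp)$ are disjoint.

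For the remaining items the key observation is that $\tau(\rho(g)) = \sqrt{\tau(g)\tau(g)^*}$ is itself block-diagonal: since $\rho(g) \in \Sim$, $\tau(\rho(g))$ is positive self-adjoint, and since $\rho(g) \in S_W$ ($S_W$ being $\theta$-stable) it preserves $W$, hence also $W^\perp$, with $W$-block $\tau_W(\rho(g))$. Then $\sqrt A = \tau_W(\rho(g)) \oplus 0$, so for $x = [v] \in M_W$, i.e. $v \in W$, one gets $R_p(x) = [\sqrt A v] = [\tau_W(\rho(g))v] = \rho(g)\cdot x$, which is \ref{sqrat-3b}. Combining \ref{sqrat-2} with Theorem \ref{orbitina}: for $x = [v] \in M \setminus \PP(W^\perp)$ we obtain $R_p(x) = [\tau(\rho(g))\piw(v)] = \rho(g)\cdot\Piw(x) = L_{\rho(g)}(\Piw(x))$, and since $\Piw$ maps $M \setminus \PP(W^\perp)$ onto $M_W$ and $\rho(g) \in S_W$ acts on $M_W$ by the automorphism $L_{\rho(g)}$, this gives $R_p(M \setminus \PP(W^\perp)) = M_W$, which is \ref{ratiosu}; together with the fact (from \ref{sqrat-3} and $M_W \neq \emptyset$, Proposition \ref{W-I}) that $M$ meets the domain of $R_p$, it follows that $\ratp$ is a rational self-map of $M$ as in \eqref{eq:def-ratp} and equals $L_{\rho(g)}\circ\Piw$, which is \ref{sqrat-5}.

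Finally, \ref{sqrat-6}: the implication $\Rightarrow$ is \ref{ratiosu} applied to any element of $S_W$ representing $p$; for $\Leftarrow$, Theorem \ref{ziasti} places an arbitrary $p \in \XS$ in a unique stratum $\XWp$, whence $\im\ratp = M_{W'}$ by \ref{ratiosu} applied to $W'$, and $M_{W'} = M_W$ forces $W' = W$ by Corollary \ref{MW-determina}, so $p \in \XW$. The step I expect to be the main nuisance is not a single deduction but the bookkeeping around $\rho(g)$ — verifying that $\rho(g) \in S_W$ and that $\tau(\rho(g))$ is block-diagonal for $V = W \oplus W^\perp$, which is exactly what makes $\sqrt A$, $\piw$ and the $S_W$-action on $M_W$ fit together; all the genuinely geometric content is already packaged in Theorem \ref{orbitina}.
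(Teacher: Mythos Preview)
Your proof is correct and follows essentially the same route as the paper's: identify $A=\tau_W(g)\tau_W(g)^*\oplus 0$, use the polar decomposition to get $\sqrt A=\tau_W(\rho(g))\oplus 0$, and feed in Theorem~\ref{orbitina} for the image computation, with Corollary~\ref{MW-determina} (via the stratification of Theorem~\ref{ziasti}) handling \ref{sqrat-6}. You are simply more explicit than the paper about the block-diagonality of $\tau(g)\tau(g)^*$ and the $\theta$-stability of $S_W$ (ensuring $\rho(g)\in S_W$), points the paper leaves as ``obvious''.
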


\subsection{\tauc subspaces and the moment map}
\label{tau-moment-section}

In this \S{} we study \tauc subspaces from the point of view of the
coadjoint orbit $\OO = \Mom(M)$. We will show that to each \tauc
subspace $W$ there corresponds a set of affine inequalities that are
satisfied by the points of the coadjoint orbit. Moreover the points
where the equalities hold are exactly the points of $\Mom(M_W)$.

\begin{defin}
  \label{def-W-datum}
  Let $W$ be a \tauc subspace and let $\data$ be a root datum. We say
  that $\data$ is a $W$-datum if $ \theta(\lieh) = \lieh $ and $
  \liebp \subset \liep_W$.  The set of $W$-data will be denoted by
  $\Data(W)$.
\end{defin}
By Corollary \ref{esistono-data} any \tauc subspace admits a
$W$-datum, which in general is not unique. If a $W$-datum is fixed,
then the following objects are well-defined: a subset $J\subset
\simple$ such that $\liep_W=\liep_J$, a subset $I\subset J$ such that
$W=V_I$, a highest weight $\mut \in \lieh^*$ and a line $x_0=[\vut]
\in M$ of highest weight vectors.  When we use these symbols we
understand that a $W$-datum has been chosen and that they refer to
that particular choice.  By \ref{W-I-peso} of Prop. \ref{W-I} $x_0\in
M_W$.  Any point $x_0\in M_W$ is obtained in this way from a
$W$-datum, but in general in many ways.  In other words, the map
$\data \mapsto x_0$ from the set of $W$-data to $M_W$ is surjective,
but in general non-injective.

\begin{lemma}\label{indep-W}
  Let $W \subset V $ be a $\tau$-connected subspace and let $\data$ be
  a $W$-datum.  Then the nonzero weights of the adjoint representation
  of $ \liez_\lieg(\lier_W)$ on $\liep_W$ coincide with the nonzero
  restrictions to $\liez_\lieg(\lier_W)$ of elements of $\rootp$.  In
  particular the set of these restrictions does not depend on the
  choice of $\data$, but only on $W$.
\end{lemma}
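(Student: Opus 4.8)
The plan is to analyze the decomposition of $\liep_W$ under the adjoint action of $\liez:=\liez_\lieg(\lier_W)$ by using the root datum $\data$ and the explicit description of $\liep_W=\liep_J$ from Proposition \ref{W-I}. First I would recall from \ref{W-I-scomp} that $\lier_W=\lieh_J\oplus\lies_J$ and $\liep_W=\liep_J=\liu_J\oplus\lieh_J\oplus\lies_J$, so that $\liep_W=\liu_J\oplus\lier_W$ and consequently, as a $\liez$-module, $\liep_W=\liez\oplus\lieh^J\oplus\bigoplus_{\alfa\in\root_J}\lieg_\alfa\oplus\bigoplus_{\alfa\in\root_+\setminus\root_J}\lieg_\alfa$. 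The point is that $\liez=\liez_\lieg(\lier_W)$ contains $\lieh_J$ (it is the center of $\lier_W$) and in fact $\liez\cap\lieh=\lieh_J$; one checks moreover that $\liez=\lieh_J$ if $\lies_J$ has trivial center, but in general $\liez$ may be slightly larger — I would verify that $\liez_\lieg(\lier_W)=\lieh_J$ precisely because $\lier_W$ contains the Cartan $\lieh$ (as $\lieh=\lieh_J\oplus\lieh^J\subset\lier_W$), so anything centralizing $\lier_W$ centralizes $\lieh$, hence lies in $\lieh$, hence lies in $\bigcap_{\alfa\in J}\ker\alfa=\lieh_J$ after also commuting with all of $\lies_J$. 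So $\liez=\lieh_J$.

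Next, with $\liez=\lieh_J$ identified, the adjoint action of $\lieh_J$ on $\liep_W$ decomposes into: the trivial action on $\lieh_J\oplus\lieh^J=\lieh$ and on each $\lieg_\alfa$ with $\alfa\in\root_J$ (since $\root_J=\root\cap\spam(J)$ and $J$ annihilates $\lieh_J$, so $\alfa\restr{\lieh_J}=0$ for $\alfa\in\root_J$); and the action with weight $\alfa\restr{\lieh_J}$ on $\lieg_\alfa$ for $\alfa\in\root_+\setminus\root_J$. Therefore the nonzero weights of $\lieh_J$ on $\liep_W$ are exactly the nonzero restrictions $\alfa\restr{\lieh_J}$ for $\alfa\in\root_+\setminus\root_J$, and these are among the nonzero restrictions of all elements of $\rootp$. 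Conversely, for $\alfa\in\root_+$, if $\alfa\restr{\lieh_J}\neq 0$ then $\alfa\notin\root_J$, so $\lieg_\alfa\subset\liu_J\subset\liep_W$ and $\alfa\restr{\lieh_J}$ does occur as a weight on $\liep_W$. This gives the claimed equality of sets: nonzero weights of $\liez$ on $\liep_W$ equals $\{\alfa\restr{\liez}:\alfa\in\rootp\}\setminus\{0\}$.

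Finally, for the "in particular" clause, the left-hand side — the nonzero weights of $\liez_\lieg(\lier_W)$ acting on $\liep_W$ — is manifestly independent of any choice of root datum, being defined purely in terms of $W$ (via $\lier_W$, $\liez_\lieg(\lier_W)$, $\liep_W$, all intrinsic to $W$). Hence the right-hand side, $\{\alfa\restr{\liez_\lieg(\lier_W)}:\alfa\in\rootp\}\setminus\{0\}$, which a priori could depend on the choice of $\data$, must in fact be independent of it and depend only on $W$. The main obstacle I anticipate is the bookkeeping in the first paragraph: pinning down exactly that $\liez_\lieg(\lier_W)=\lieh_J$ (rather than something larger, which would happen if $\lier_W$ failed to contain a full Cartan — but here it does, by construction in Lemma \ref{torello-dentro-KW} and Corollary \ref{esistono-data}), and carefully separating $\root_J$ from $\root_+\setminus\root_J$ when restricting roots to $\lieh_J$. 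Everything else is a routine consequence of the root-space decomposition and the structure of $\liep_J$ recalled in \eqref{eq:sottoalgebre-1} and Proposition \ref{W-I}.
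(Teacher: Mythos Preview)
Your proposal is correct and follows essentially the same approach as the paper: identify $\liez_\lieg(\lier_W)=\lieh_J$, decompose $\liep_W=\lier_W\oplus\bigoplus_{\alfa\in\root_+\setminus\root_J}\lieg_\alfa$, and observe that $\alfa\restr{\lieh_J}=0$ iff $\alfa\in\root_J$. You are in fact slightly more careful than the paper, which simply cites Proposition~\ref{W-I}\ref{W-I-scomp} for $\liez_\lieg(\lier_W)=\lieh_J$ (that item literally gives only the center $\liez(\lier_W)=\lieh_J$), whereas you supply the missing step that the centralizer in $\lieg$ is contained in $\lieh$ because $\lieh\subset\lier_W$.
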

\begin{proof}
  By \ref{W-I-scomp} of Prop.  \ref{W-I} $\liez_\lieg(\lier_W) =
  \lieh_J$.  The decomposition
  \begin{gather*}
    \liep_W= \lier_W \oplus \bigoplus _{\alfa\in \root_+ \setminus
      \root_J}\lieg_\alfa
  \end{gather*}
  is clearly $\ad\, \lieh_J$-invariant.  For $\la\in \lieh_J^*$ denote
  by $U_\la$ the corresponding weight space of $\ad : \lieh_J \ra
  \gl(\liep_W)$.  If $\alfa \in \root$, then $\alfa\restr{\lieh_J} =0$
  if and only if $\alfa\in \root_J$. So $U_0=\lier_W$ and for $\la$ a
  nonzero weight $ U_\la = \bigoplus_{\alfa \in \root_+:
    \alfa\restr{\lieh_J} = \la} \lieg_\alfa$.
\end{proof}

Let $W$ be a \tauc subspace and let $\data$ be a $W$-datum. Let
$\liek'_W$ be as in \eqref{defKW}.  Set
\begin{gather*}
  \liec_W:=\liez_\liek(\lier_W) \oplus \liek'_W \qquad \liefw: =
  \lieh_I\cap \liek.
\end{gather*}
By definition $\liec_W$ depends only on $W$, while $\liefw$ does in
general depend also on $\data$.  Since $ \liec_W = (\lieh_J \cap
\liek) \oplus \liek_{I'}$ and $ \lieh_I = \lieh_J \oplus \lieh^{I'}$
by Prop. \ref{W-I} \ref{lieh-decomposition}, it follows that $\liefw
\subset \liec_W$ for any $W$-datum $\data$.
\begin{lemma}\label{caprette-nondip}
  If $x, x'\in M_W$, then $\Mom(x)\restr{\liec_W} =
  \Mom(x')\restr{\liec_W} $.
\end{lemma}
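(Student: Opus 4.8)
The plan is to reduce everything to the equivariance of the moment map. First observe that, by Theorem~\ref{orbitina}, $M_W=S_W\cdot x_0$ is the unique closed orbit of $S_W$ in $\PP(W)$; since $S_W$ is a complex semisimple group, $K_W=S_W\cap K$ is one of its maximal compact subgroups, and $\tau_W$ is irreducible (Prop.~\ref{W-I}\ref{W-I-irreducible}), so the closed orbit coincides with the unique complex $K_W$-orbit (exactly as recalled for $\PP(V)$ at p.~\pageref{data}). Hence $K_W$ acts transitively on $M_W$: every $x\in M_W$ has the form $x=a\cdot x_0$ with $a\in K_W$.

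Second, I claim that $\Ad(K_W)$ fixes $\liec_W$ pointwise, equivalently that $\lies_W=\Lie K_W\otimes\C$ centralizes $\liec_W=\liez_\liek(\lier_W)\oplus\liek'_W$. By construction $S_W\subset\tsw\subset R_W$, hence $\lies_W\subset\lier_W$; therefore every element of $\liez_\liek(\lier_W)$ centralizes $\lies_W$, and by skew-symmetry $\lies_W$ centralizes $\liez_\liek(\lier_W)$. Moreover $S_W$ and $S'_W$ are products of distinct simple factors of $\tsw$, so they commute, and thus $\lies_W$ centralizes $\lies'_W\supset\liek'_W$. Since $K_W$ is connected, it follows that $\Ad(a\meno)v=v$ for every $a\in K_W$ and every $v\in\liec_W$.

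Now take $x,x'\in M_W$ and write $x=a\cdot x_0$, $x'=a'\cdot x_0$ with $a,a'\in K_W$. For any $v\in\liec_W$, the equivariance property \eqref{eq:mement} of $\Mom$ gives $\deo\Mom(x),v\ode=\deo\Mom(x_0),\Ad(a\meno)v\ode=\deo\Mom(x_0),v\ode$, and likewise $\deo\Mom(x'),v\ode=\deo\Mom(x_0),v\ode$. Hence $\Mom(x)\restr{\liec_W}=\Mom(x_0)\restr{\liec_W}=\Mom(x')\restr{\liec_W}$, which is the assertion.

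The argument is short; the one point that needs attention — and the reason one cannot simply write $x=g\cdot x_0$ with $g\in S_W$ and skip the transitivity step — is that $\Mom$ is equivariant only for the compact group $K$, not for the non-compact group $S_W$. So connecting $x$ and $x'$ by an element of $K_W$ rather than of $S_W$ is essential, and it is precisely the transitivity of $K_W$ on $M_W$ (via Theorem~\ref{orbitina}) that makes this possible; that is the crux of the proof.
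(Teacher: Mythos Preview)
Your proof is correct and follows the same approach as the paper's: use transitivity of $K_W$ on $M_W$, the commutation $[\liek_W,\liec_W]=0$, connectedness of $K_W$, and equivariance of $\Mom$. The paper's proof is just the terse version of yours---it asserts $x'=a\cdot x$ with $a\in K_W$ and $[\liek_W,\liec_W]=0$ without spelling out the justifications you supply.
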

\begin{proof}
  $x'=a\cdot x$ for some $a\in K_W $.  $ [\liek_W, \liec_W]=0$ and
  $K_W$ is connected. So if $v\in \liec_W$, then $\Ad(a\meno)v=v$ and
  $ \deo \Mom(x'), v\ode = \deo \Mom(x),\Ad(a\meno) v\ode = \deo
  \Mom(x), v\ode $.
\end{proof}
Pick $x\in M_W$ and set
\begin{gather*}
  E_{W} : = \{ \la\in \liek^* : \la(v) = \deo \Mom(x) , v \ode \text {
    for all } v \in \liec_W\}.
\end{gather*}
By the previous lemma $E_{W}$ does not depend on the choice of $x\in
M_W$.  Similarly, for a $W$-datum $\data$ define
\begin{gather}
  \label{def-E-wdata}
  E_\wdata : = \{ \la\in \liek^* : \la(v) = \deo \Mom(x_0) , v \ode
  \text { for all } v \in \liefw\}.
\end{gather}
(As usual $x_0$ is the line thorough the highest weight vector
determined by $\data$.)
\begin{lemma}\label{dc-op}
  \begin{gather*}
    E_W = \bigcap_{\data \in \Data(W)} E_\wdata.
  \end{gather*}
\end{lemma}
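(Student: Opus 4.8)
The plan is to reduce the equality to the purely linear statement
\begin{equation*}
  \spam\bigcup_{\data\in\Data(W)}\liefw=\liec_W
\end{equation*}
and then to prove this span identity by conjugating $W$-data inside $K'_W$.

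To set up the reduction, fix once and for all a point $x\in M_W$ and put $\mu:=\Mom(x)$. For any $W$-datum the highest weight line $x_0$ lies in $M_W$ by Proposition~\ref{W-I}, part~\ref{W-I-peso}, so Lemma~\ref{caprette-nondip} gives $\Mom(x_0)\restr{\liec_W}=\mu\restr{\liec_W}$; since $\liefw\subseteq\liec_W$, comparing with \eqref{def-E-wdata} shows $E_\wdata=\{\la\in\liek^*:\la\restr{\liefw}=\mu\restr{\liefw}\}$, while $E_W=\{\la\in\liek^*:\la\restr{\liec_W}=\mu\restr{\liec_W}\}$. In particular $E_W\subseteq E_\wdata$ for every $W$-datum, and $\bigcap_\data E_\wdata=\{\la:\la\restr{V_0}=\mu\restr{V_0}\}$ where $V_0:=\spam\bigcup_\data\liefw$. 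As each $\liefw$ lies in $\liec_W$ we have $V_0\subseteq\liec_W$, and the two sets coincide precisely when $V_0=\liec_W$; thus the Lemma is equivalent to the span identity above.

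To prove that identity, recall from Proposition~\ref{W-I} (parts~\ref{lieh-decomposition} and~\ref{W-I-ricartan}) that for every $W$-datum one has $\lieh_I=\lieh_J\operp\lieh^{I'}$, with $\lieh_J=\liez_\lieg(\lier_W)$ and $\lieh^{I'}=\lieh\cap\lies'_W$ a Cartan subalgebra of $\lies'_W$; since $\theta$ fixes $\tkw$ pointwise, $\lier_W$ and $\lies'_W$ are $\theta$-stable, so the decomposition is $\theta$-stable and intersecting with $\liek$ yields $\liefw=\liez_\liek(\lier_W)\oplus\mathfrak{t}$, where the first summand is independent of the datum and $\mathfrak{t}:=\lieh^{I'}\cap\liek$ is a \emph{maximal} torus of the compact semisimple algebra $\liek'_W$. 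Now fix one $W$-datum and let $k\in K'_W$. Because $k\in K$ commutes with $\theta$ and $K'_W\subseteq S'_W\subseteq R_W\subseteq P_W$ (the last inclusion since $P_W$ is Zariski-closed and contains $\tkw$, hence its Zariski closure $R_W$), one has $\theta(\Ad(k)\lieh)=\Ad(k)\lieh$ and $\Ad(k)\liebp\subseteq\Ad(k)\liep_W=\liep_W$, so $(\Ad(k)\lieh,\Ad(k)\simple)$ is again a $W$-datum. Moreover $k$ normalizes $\lies'_W=\Lie S'_W$, so the description $\lieh^{I'}=\lieh\cap\lies'_W$ shows the maximal torus attached to this new datum to be $\Ad(k)\lieh\cap\lies'_W\cap\liek=\Ad(k)(\lieh\cap\lies'_W\cap\liek)=\Ad(k)\mathfrak{t}$.

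Finally, $K'_W$ is compact and connected, so every element of $\liek'_W$ lies in the Lie algebra of a maximal torus and all maximal tori are $\Ad(K'_W)$-conjugate; hence $\bigcup_{k\in K'_W}\Ad(k)\mathfrak{t}=\liek'_W$, and in particular this union spans $\liek'_W$. Together with the datum-independent summand this gives $V_0\supseteq\liez_\liek(\lier_W)\oplus\liek'_W=\liec_W$, whence $V_0=\liec_W$ and the Lemma follows. The step I expect to cost the most care is the middle one: checking that conjugation by $k\in K'_W$ sends a $W$-datum to a $W$-datum and carries its Cartan piece $\mathfrak{t}$ to $\Ad(k)\mathfrak{t}$ — for this one needs both the inclusion $K'_W\subseteq P_W$ and the intrinsic identification $\lieh^{I'}=\lieh\cap\lies'_W$, which come from Proposition~\ref{W-I} and the definitions.
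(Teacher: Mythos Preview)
Your proof is correct and takes essentially the same approach as the paper: both hinge on the decomposition $\liefw=\liez_\liek(\lier_W)\oplus\mathfrak t$ with $\mathfrak t$ a maximal torus of $\liek'_W$, together with the fact that such tori sweep out $\liek'_W$. The only difference is organizational: you first reduce to the span identity and then produce enough $W$-data by conjugating one fixed datum by $K'_W$, whereas the paper argues pointwise---given $v=v_1+v_2\in\liez_\liek(\lier_W)\oplus\liek'_W$, it places $v_2$ in some maximal torus $\liet_2\subset\liek'_W$ and completes $\liez_\liek(\lier_W)\oplus(\liet_2\otimes\C)$ to a Cartan subalgebra yielding a $W$-datum with $v\in\liefw$. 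Your version is a bit more explicit about why the resulting data are genuinely $W$-data (the checks that $\Ad(k)$ preserves $\theta$-stability and $\liep_W$), which the paper leaves implicit.
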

\begin{proof}
  Since $\liefw \subset \liec_W$, $E_W \subset E_\wdata$. One
  inclusion follows. For the other take $\la\in \bigcap E_\wdata$.  If
  $v \in \liec_W$, then $v =v_1 + v_2\in \liez_{\liek}(\lier_W) \oplus
  \liek'_W$.  $v_2$ lies in the Lie algebra $\liet_2$ of a maximal
  torus of $\liek_W'$. Completing $ \liez_{\liek}(\lier_W) \oplus
  (\liet_2 \otimes \C)$ to a Cartan subalgebra of $\lieg$ we get a
  datum $\data$ such that $v\in \liefw$.  Since $\la \in 
  E_\wdata$ we get $ \la(v) = \deo \Mom(x_0), v \ode $. This proves
  that $\la \in E_W$.
\end{proof}

Next set
\begin{gather*}
  \liefwp := \{ v\in \liefw : i \alfa(v) \leq 0 \text { for all }
  \alfa \in \simple\}.
\end{gather*}
Note that $\liefwp = \{ v\in \liefw : i \alfa(v) \leq 0 \text { for
  all } \alfa \in \simple \setminus I\}$, since $\liefw =
\cap_{\alfa\in I} \ker i\alfa \cap\liek $.  Moreover $\{ i\alfa: \alfa
\in \simple\}$ is a basis of $\liet^*$ and $\{i \alfa\restr{\liefw} :
\alfa \in \simple \setminus I\}$ is a basis of $\liefw^*$.  Therefore
$\liefwp$ is just a (higher dimensional) quadrant, so in particular it
is a convex cone with nonempty interior spanned by a finite number of
rays.  Put
\begin{gather}
  \label{eq:def-CW}
  \begin{gathered}
    C_\wdata := \{ \la \in \liek^*: \la(v) \leq \deo \Mom(x_0), v \ode
    \text { for all } v\in \liefwp\}\\
    C_W :=\bigcap_{\data\in \Data (W)} C_\wdata \qquad
    \A:=\bigcup_{\data \in \Data (W)} \liefwp.
  \end{gathered}
\end{gather}
\begin{lemma}\label{rupicapra}
  Let $x$ be any point in $M_W$. Then $C_W = \{ \la\in \liek^* :
  \la(v) \leq \deo \Mom(x) , v \ode$ for all $ v \in \A\}$ and $E_W =
  \{ \la\in \liek^* : \la(v) = \deo \Mom(x) , v \ode \text { for all }
  v \in \A\}$.
\end{lemma}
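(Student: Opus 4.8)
The plan is to reduce both assertions to the already-proved identities $C_W = \bigcap_{\data} C_\wdata$, $E_W = \bigcap_{\data} E_\wdata$ (the latter being Lemma \ref{dc-op}), together with a clean matching between the cone $\A = \bigcup_\data \liefwp$ and the cones $\liefwp$ that enter each $C_\wdata$ and $E_\wdata$. First I would fix once and for all a point $x\in M_W$; by Lemma \ref{caprette-nondip} the restriction $\Mom(x)\restr{\liec_W}$ is independent of the choice, and since each $\liefwp \subset \liefw \subset \liec_W$ (the inclusion noted right before Lemma \ref{caprette-nondip}), the value $\deo \Mom(x), v\ode$ for $v\in \A$ does not depend on which $x\in M_W$ we picked. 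This already takes care of the fact that the right-hand sides in the statement are well-defined; the real content is that they equal $C_W$ and $E_W$.

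For the equality $C_W = \{\la : \la(v)\le \deo \Mom(x),v\ode\ \forall v\in \A\}$, I would argue by double inclusion. If $\la$ satisfies the inequality for all $v\in\A$, then in particular for each individual $W$-datum $\data$ it satisfies it for all $v\in\liefwp\subset\A$; but one must be careful, because in the definition \eqref{eq:def-CW} of $C_\wdata$ the test functional is $\deo \Mom(x_0),v\ode$ with $x_0$ the highest-weight line of that datum, not our fixed $x$. Here one uses that $x_0\in M_W$ (Prop. \ref{W-I}\ref{W-I-peso}) and that $v\in\liefwp\subset\liec_W$, so by Lemma \ref{caprette-nondip} $\deo\Mom(x_0),v\ode = \deo\Mom(x),v\ode$. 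Hence $\la\in C_\wdata$ for every $\data$, so $\la\in C_W$. Conversely if $\la\in C_W = \bigcap_\data C_\wdata$, then for any $v\in\A$ we have $v\in\liefwp$ for some $\data$, and $\la\in C_\wdata$ gives $\la(v)\le \deo\Mom(x_0),v\ode = \deo\Mom(x),v\ode$ again by Lemma \ref{caprette-nondip}. This proves the first identity.

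The second identity is handled in exactly the same way, replacing inequalities by equalities: one inclusion is the observation that $\A = \bigcup_\data\liefwp$ and $\liefwp$ spans $\liefw$ as a cone with nonempty interior (as noted before \eqref{eq:def-CW}), so imposing equality on all of $\A$ is the same as imposing it on $\bigcup_\data\liefw$, i.e. $\la\in\bigcap_\data E_\wdata$, which by Lemma \ref{dc-op} is $E_W$; the reverse inclusion runs through Lemma \ref{dc-op} and Lemma \ref{caprette-nondip} as above. The only point requiring a word of care is the passage from ``equality on the cone $\liefwp$'' to ``equality on the linear span $\liefw$'': since $\liefwp$ is a full-dimensional quadrant in $\liefw$, a linear functional vanishing (after subtracting $\deo\Mom(x),\cdot\ode$) on $\liefwp$ vanishes on $\liefw$, so this is immediate. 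I do not anticipate a serious obstacle here; the only thing to keep straight throughout is the bookkeeping between the datum-dependent base point $x_0$ and the fixed $x\in M_W$, which is precisely what Lemma \ref{caprette-nondip} is there to absorb.
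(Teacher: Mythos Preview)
Your proposal is correct and follows essentially the same approach as the paper: use Lemma \ref{caprette-nondip} to replace each datum-dependent base point $x_0$ by the fixed $x$, read off the $C_W$ statement from the definitions $C_W=\bigcap_\data C_\wdata$ and $\A=\bigcup_\data\liefwp$, and for $E_W$ use that $\liefwp$ spans $\liefw$ to rewrite $E_\wdata$ in terms of $\liefwp$ before applying Lemma \ref{dc-op}. The paper's proof is terser (it simply says the first statement ``follows directly from the definitions''), but the content is identical to what you have written out.
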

\begin{proof}
  The choice of $x$ is irrelevant, by Lemma \ref{caprette-nondip}
  since $v\in \liec_W$.  The first statement follows directly from the
  definitions of $C_W$ and $\A$.  Moreover $\liefwp$ spans $\liefw$,
  so $ E_\wdata = \{ \la\in \liek^* : \la(v) = \deo \Mom(x) , v \ode
  \text { for all } v \in \liefwp\}$.  So the second statement follows
  from Lemma \ref{dc-op}. Put in another way, the linear span of $\A$
  is $\liec_W$.
\end{proof}

%
%
%
%

For $v\in \liek$ set $h_v :=\deo \Mom, v\ode$. $h_v$ is a component of
the moment map and it is the Hamiltonian function of $\xi_v$, i.e.
$-i_{\xi_v} \om = dh_v$.  Let $\htau$ be as in \eqref{rappresento}.
\begin{lemma}
  Let $W\subset V$ be $\tau$-connected and let $\data$ be a $W$-datum.
  \begin{enumerate}
  \item \label{boia-critico} If $v\in \liec_W$, then $x_0$ is a
    critical point of $h_v$.
  \item If $v\in \liec_W$, $u\in \liek$ and $w=\xi_u(x_0)$, then
    \begin{gather*}
      D^2h_v(x_0) (w,w) = i B([u,\htau],[u, v]).
    \end{gather*}
  \end{enumerate}
\end{lemma}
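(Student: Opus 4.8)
The plan is to compute the Hessian of $h_v$ at the critical point $x_0$ using the standard formula for the Hessian of a moment map component at a fixed point of the corresponding Hamiltonian flow. Part \ref{boia-critico} should come essentially for free: by Proposition \ref{salute}, $\pai \Mom(x_0),X\ode = -i\mut(X^\lieh)$, and since $v\in \liec_W \subset \lieh_J \oplus \liek_{I'}$ one checks directly that the fundamental vector field $\xi_v$ vanishes at $x_0$ — indeed $x_0=[\vut]$ is fixed by the torus $\exp(\lieh_J\cap\liek)$ and by $K_{I'}=S_{I'}\cap K$ since $S_{I'}$ acts trivially on $W\ni\vut$, hence by all of $\exp(\liec_W)$. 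So $\xi_v(x_0)=0$, which means $dh_v(x_0)=-i_{\xi_v}\om=0$, i.e. $x_0$ is critical.

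For part (b), recall that at a critical point $x_0$ of $h_v$ the Hessian $D^2h_v(x_0)$ is a well-defined symmetric bilinear form on $T_{x_0}M$. The key identity is the following: since $-i_{\xi_v}\om = dh_v$ and $\xi_v(x_0)=0$, linearizing the flow of $\xi_v$ at $x_0$ gives an endomorphism $A_v := (\nabla\xi_v)(x_0)$ of $T_{x_0}M$, and for any $w\in T_{x_0}M$ one has $D^2h_v(x_0)(w,w) = -\om(A_v w, w) = \om(w, A_v w)$. Now I would take $w=\xi_u(x_0)$ for $u\in\liek$. The linearization of $\xi_v$ along the orbit direction $\xi_u$ is governed by the bracket: $A_v\,\xi_u(x_0) = [\xi_v,\xi_u](x_0) = -\xi_{[v,u]}(x_0) = \xi_{[u,v]}(x_0)$ (using that $\xi$ is a Lie algebra anti-homomorphism and that $\xi_v(x_0)=0$ kills the correction term). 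Hence
\begin{gather*}
  D^2 h_v(x_0)(w,w) = \om\bigl(\xi_u(x_0), \xi_{[u,v]}(x_0)\bigr).
\end{gather*}
It remains to evaluate this symplectic pairing of two fundamental vector fields at $x_0$. Using $-i_{\xi_u}\om = dh_u$ we get $\om(\xi_u,\xi_{[u,v]}) = dh_{[u,v]}(\xi_u) = \xi_u(h_{[u,v]})$, and since $h_{[u,v]}(x) = \pai\Mom(x),[u,v]\ode$, differentiating the equivariance relation \eqref{eq:mement} along $\xi_u$ at $x_0$ and using Proposition \ref{salute} gives $\xi_u(h_{[u,v]})(x_0) = -i\mut\bigl(([u,[u,v]])^\lieh\bigr)$. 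The final step is to massage this into $iB([u,\htau],[u,v])$: writing $\mut(X^\lieh) = B(\htau, X^\lieh) = B(\htau, X)$ (since $\htau\in\lieh$ and $B$ is zero between $\lieh$ and root spaces, and $B(H_\mut,\cdot)=\mut(\cdot)$ on $\lieh$ by \eqref{rappresento}), we have $-i\mut(([u,[u,v]])^\lieh) = -iB(\htau,[u,[u,v]]) = -iB([\htau,u],[u,v]) = iB([u,\htau],[u,v])$, using $\ad$-invariance of $B$. This yields the claimed formula.

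The main obstacle I anticipate is getting all the signs and the factor conventions right — there are several places where a sign could slip: the convention $-i_{\xi_v}\om = dh_v$ (not $+$), the anti-homomorphism property $[\xi_u,\xi_v] = -\xi_{[u,v]}$, the relation between the Hessian and the linearized flow, and the sign in Proposition \ref{salute} (the paper explicitly notes \cite{baston-eastwood} uses the opposite convention). I would double-check by testing on $\PP^1$ with $G=\Sl(2,\C)$, where everything can be written out in coordinates. A secondary point requiring care: one must verify that $[u,[u,v]]$ indeed has the stated $\lieh$-component behavior, but since we only pair against $\mut$ via $B(\htau,\cdot)$ and $\htau\in\lieh$, the off-$\lieh$ parts of $[u,[u,v]]$ are automatically annihilated, so no projection subtlety actually arises — this is the one place the computation is cleaner than it first looks.
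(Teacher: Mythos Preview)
Your overall strategy is sound and lands on the correct formula, and it is closely related to the paper's own argument: both ultimately compute the second derivative of $t\mapsto h_v(\exp(tu)\cdot x_0)$ using equivariance of $\Mom$. The paper, however, does this in one stroke --- it sets $\ga(t)=\Ad(\exp(-tu))v$, writes $h_v(\exp(tu)\cdot x_0)=\deo\Mom(x_0),\ga(t)\ode$, differentiates twice to get $\deo\Mom(x_0),[u,[u,v]]\ode$, and then applies Proposition~\ref{salute} and $\ad$-invariance of $B$. Your route through the symplectic linearization $A_v=(\nabla\xi_v)(x_0)$ is more conceptual but introduces extra sign bookkeeping, and in fact two of your intermediate signs are wrong and happen to cancel.

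Concretely: with your convention $A_v=(\nabla\xi_v)(x_0)$, one has $A_v(\xi_u(x_0))=\nabla_{\xi_u}\xi_v(x_0)=[\xi_u,\xi_v](x_0)$ (since $[\xi_v,\xi_u]=\nabla_{\xi_v}\xi_u-\nabla_{\xi_u}\xi_v$ and $\xi_v(x_0)=0$), not $[\xi_v,\xi_u](x_0)$ as you wrote; with the anti-homomorphism property this gives $-\xi_{[u,v]}(x_0)$, opposite to your claim. Second, $\xi_u(h_{[u,v]})(x_0)=\deo\Mom(x_0),-[u,[u,v]]\ode=+i\mut\bigl([u,[u,v]]^\lieh\bigr)$, not $-i\mut(\ldots)$ as you wrote. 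These two errors cancel, so your final line is correct, but the derivation as written is not. Your part~\ref{boia-critico} argument (that $\exp(\liec_W)$ fixes $x_0$, hence $\xi_v(x_0)=0$) is correct and in fact slightly stronger than what the paper proves there; the paper instead computes $dh_v(x_0)(w)=iB([v,\htau],u)$ directly and then checks $[\liec_W,\htau]=0$.
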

\begin{proof}
  Since $K$ is transitive on $M$ any $w\in T_{x_0}M $ can be written
  as $w=\xi_u(x_0)$ for some $u\in \liek$. Set $\alfa(t) =
  \exp(tu)\cdot x_0$ and $\ga(t) = \Ad(\exp(-tu))v$. Then
  \begin{gather*}
    h_v(\alfa(t)) = \deo \Mom ( \exp tu\cdot x_0) , v\ode = \deo \Mom
    ( x_0) , \Ad(\exp(-tu)) v\ode
    =\deo \Mom ( x_0) , \ga(t)\ode \\
    dh_v(x_0) (w ) = \desudtzero h_v(\alfa(t))= \deo \Mom(x_0),
    \dot{\ga}(0)\ode.
  \end{gather*}
  Clearly $ \dot{\ga}(0) = -[u, v]$, so using \eqref{eq:momento-x0}
  \begin{gather*}
    dh_v(x_0) (w ) = i \mut( [u,v]^\lieh)= B(i\htau, [u, v] ) = iB
    ([v,\htau], u).
  \end{gather*}
  Since $I'$ is orthogonal to $ \mut$, $\htau \in \lieh_{I'}$ and
  $\lieh_{I'} = \lieh_J \oplus \lieh^I \subset \lieh_J \oplus \liek_I
  $ by Prop \ref{W-I} \ref{lieh-decomposition}. Hence $[\liec_W,\htau
  ] =0$.  Thus $dh_v(x_0)=0$ and \ref{boia-critico} is proved.  To
  compute the Hessian consider again the path $\alfa$.  Since $x_0$ is
  critical
  \begin{gather*}
    D^2h_v(x_0) (w, w) = \dfrac {\mathrm{d^2}} {\mathrm{dt^2}} \bigg
    \vert _{t=0} h_v(\alfa(t)) = \deo \Mom(x_0), \ddot{\ga}(0)\ode
    = \deo \Mom(x_0), [u, [u, v]] \ode =\\
    =-iB ( \htau, [u, [u, v]] ) = iB ([u, \htau], [u, v]).
  \end{gather*}
  This proves the second statement.
\end{proof}
\begin{teo}
  \label{sticazzi}
  If $W$ is a proper \tauc subspace, then $ \OO=\Mom(M) \subset C_W $
  and $ \Mom\meno (E_W) = M_W$.
\end{teo}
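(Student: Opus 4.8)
\textbf{Proof strategy for Theorem \ref{sticazzi}.}

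The plan is to reduce everything to the moment map picture via the convexity theorem applied to the torus $\exp(\liefw)$, together with the Hessian formula just proved. First I would fix a $W$-datum $\data$ and work with the quadrant cone $\liefwp \subset \liefw$. For $v \in \liefwp$, write $v = \sum_{\alfa\in\simple\setminus I} c_\alfa v_\alfa$ where $v_\alfa$ is the corresponding coordinate vector (so $i\beta(v_\alfa) = -\delta_{\alfa\beta}$ up to positive scaling) and each $c_\alfa \geq 0$. The key point is to show that the function $h_v = \deo\Mom,v\ode$ attains its maximum on $M$ precisely at the points of $M_W$, with maximal value $\deo\Mom(x_0),v\ode$. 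From the lemma above, $x_0$ is a critical point of $h_v$ for $v\in\liec_W \supset \liefwp$, and the Hessian at $x_0$ in the direction $w = \xi_u(x_0)$ equals $iB([u,\htau],[u,v])$. I would compute this in a root basis: decomposing $u$ into root components $u = u^\lieh + \sum_\beta u_\beta$, the bracket $[u,\htau]$ picks out $\htau$-eigendirections and $[u,v]$ picks out $v$-eigendirections, and one gets $D^2 h_v(x_0)(w,w) = -\sum_{\beta\in\root_+\setminus\root_E} \mut(H_\beta)\,\beta(v)\,|u_\beta|^2$ (signs to be checked). Since $\beta(v) = -i\sum c_\alfa(\text{coefficient of }\alfa\text{ in }\beta) \cdot(\text{something}) \le 0$ for $\beta$ a positive root not in $\root_E$ and $v\in\liefwp$, while $\mut(H_\beta) \ge 0$ likewise, the Hessian is negative semidefinite; its kernel consists of directions $u_\beta$ with $\beta\in\root_E$ or $\beta(v)=0$ or $\mut(H_\beta)=0$, which one identifies with $T_{x_0}M_W$ (using that $W=V_I$ and the description of $\liefwp$). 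So $x_0$ is a maximum of $h_v$ and the critical set near $x_0$ is $M_W$.

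Next I would invoke the Atiyah–Guillemin–Sternberg convexity theorem for the Hamiltonian torus action of $T_{\liefw} = \exp(\overline{\liefw})$ on $M$: the image of the moment map $\Mom\restr{\liefw^*}$ is the convex hull of the images of the fixed points, and for each $v\in\liefwp$ the maximum of $h_v$ on $M$ is attained on a face, namely a connected component of the fixed point set, which by the Hessian computation is exactly $M_W$ (it is connected since it is a flag manifold $S_W\cdot x_0$ up to the $K_W$-orbit, cf. Theorem \ref{orbitina}). This gives $\deo\Mom(x),v\ode \le \deo\Mom(x_0),v\ode$ for all $x\in M$ and all $v\in\liefwp$, i.e. $\OO\subset C_\wdata$, with equality (for all $v\in\liefwp$ simultaneously) exactly when $x$ lies in the maximal face, i.e. $\Mom(x)\in E_\wdata$ iff $x\in M_W$ — here I use that $\liefwp$ spans $\liefw$ and that a point of $M$ maximizing every $h_v$, $v\in\liefwp$, lies in the common fixed locus. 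Intersecting over all $W$-data $\data\in\Data(W)$ and using Lemmata \ref{dc-op}, \ref{rupicapra}, this yields $\OO\subset C_W$ and $\Mom\meno(E_W)\cap M \supseteq$ — wait, rather: $\Mom(x)\in E_W$ iff $\Mom(x)\in E_\wdata$ for all $\data$ iff $x\in M_W$ (the ``only if'' needs that some single $W$-datum already detects $M_W$, which it does by the face argument; the ``if'' is Lemma \ref{caprette-nondip}). Hence $\Mom\meno(E_W) = M_W$.

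The main obstacle I anticipate is the precise identification of the maximal face: showing that the connected component of the $T_{\liefw}$-fixed point set containing $x_0$ on which $h_v$ ($v$ in the interior of $\liefwp$) is maximized is exactly $M_W = S_W\cdot x_0$, and not something larger or smaller. This requires carefully matching the kernel of the Hessian — governed by which roots $\beta\in\root_+\setminus\root_E$ satisfy $\mut(H_\beta)=0$ and $\beta\restr{\liefw}=0$ — with the tangent space $T_{x_0}M_W$, and then a connectedness argument. One must also be careful that $\liefwp$ has nonempty interior in $\liefw$ (already noted in the text, since it is a full-dimensional quadrant), so that a generic $v\in\liefwp$ has the same critical manifold as the whole cone's worth of inequalities. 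The rest — negativity of the Hessian, the convexity theorem, and the intersection over $W$-data — is then routine given the lemmas already established.
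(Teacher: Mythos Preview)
Your proposal is correct and follows essentially the same approach as the paper: compute the Hessian of $h_v$ at $x_0$ in a root basis to see it is negative semidefinite, invoke the Morse--Bott property of moment map components (Frankel) together with Atiyah's connectedness of moment-map fibers to conclude $x_0$ is a global maximum and the level set $F=\Mom_I^{-1}(\Mom_I(x_0))$ is a connected submanifold, then match $T_{x_0}F$ with $T_{x_0}M_W$ via a root-space computation. The only executional differences are that the paper identifies $T_{x_0}F$ as the fixed subspace of the $T_I$-isotropy representation (i.e.\ $\liez(\liet_I)\cap\liem$) rather than as the Hessian kernel you propose---these agree because $\beta(H_{\mut})>0$ for every $\beta\in\root_+\setminus\root_E$---and the paper appeals directly to Frankel rather than to the full convexity theorem.
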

\begin{proof}
  By the definition \eqref{eq:def-CW} of $C_W$, to prove the first
  part of the theorem we need to show that $ \OO=\Mom(M) \subset
  C_\wdata$ for any $W$-datum $\data$.  Let $\data$ be such a datum.
  It is enough to show that for any $v\in \liefwp$ the maximum of
  $h_v$ on $M$ is attained at $x_0$.  We know (see
  e.g. \cite[pp. 352ff]{knapp-beyond}) that it is possible to choose
  vectors $X_\beta \in \lieg_\beta$ for $\beta \in \root$ in such a
  way that as $\beta$ varies in $ \root_+$ the vectors
  \begin{gather}
    \label{base-compatta}
    A_\beta = \frac{i}{\sqrt{2}} (X_\beta + X_{-\beta}) \qquad B_\beta
    = \frac{1}{\sqrt{2}} (X_\beta - X_{-\beta})
  \end{gather}
  describe a $(-B)$-orthonormal basis of the orthogonal complement of
  $\liet = \liek \cap \lieh$ inside $ \liek$.  This means that
  \begin{gather}
    \label{eq:orthonormality}
    B(A_\beta, B_\ga) = 0 \qquad B(A_\beta, A_\ga) = B(B_\beta, B_\ga)
    = -\delta_{\beta\ga}.
  \end{gather}
  If $H\in \lieh$ then $ [A_\beta, H ]= -i\beta(H) B_\beta $ and $
  [B_\beta, H ]= i\beta(H) A_\beta$.  Given $w\in T_{x_0}M $, let
  $u\in \liek$ be such that $w=\xi_u(x_0)$. Write
  \begin{gather*}
    u= u^\liet + \sum_{\beta \in \root_+} ( a_\beta A_\beta + b_\beta
    B_\beta)
  \end{gather*}
  where $u^\liet \in \liet$.  Then for any $H\in \lieh$
  \begin{gather*}
    [u, H ] = i \sum_{\beta \in \root_+} \beta(H) \bigl ( - a_\beta
    B_\beta + b_\beta A_\beta \bigr).
  \end{gather*}
  Applying this formula with $H=\htau$ and then with $H=v\in
  \liefwp\subset \lieh$, using \eqref{eq:orthonormality} and applying
  the computation of the previous lemma, we get
  \begin{gather*}
    D^2h_v(x_0)(w,w) = i \sum_{\beta \in \root_+} (a_\beta ^2 + b_\beta^2 )
    \beta (H_\tau) \beta(v).
  \end{gather*}
  If $\beta \in \root_+$, then $\beta(\htau)\geq 0$ since $\mut$ is
  dominant, while $i\beta(v) \leq 0$ since $v\in \liefwp$. Therefore
  we get that the Hessian at $x_0$ is negative semidefinite.  But
  $h_v$ is a component of the moment map $\Mom : M \ra \liek^*$, hence
  it is a Morse-Bott function with critical points of even index (this
  is Frankel theorem, see e.g. \cite[Thm. 2.3,
  p. 109]{audin-torus-actions} or
  \cite[p. 186]{mcduff-salamon-symplectic}) and any local maximum
  point is an absolute maximum point (see
  e.g. \cite[p. 112]{audin-torus-actions}). Therefore $x_0$ is a
  global maximum of $h_v$ on $M$.  This proves that $\OO \subset
  C_\wdata$. Since $\data$ is an arbitrary $W$-datum, we get the first
  part of the theorem.

  Now fix a $W$-datum $\data$ and set
  \begin{gather*}
    F= \{x\in M: h_v(x) = h_v(x_0) \text{ for any } v\in \liefw\}.
  \end{gather*}
  Recalling definition \eqref{def-E-wdata} one immediately recognizes
  that
  \begin{gather}
    \label{F=baracca}
    F=\Mom\meno (E_\wdata).
  \end{gather}
  We start by showing that $F=M_W$.  Set for simplicity $\liet_I: =
  \liefw$.  The subgroup $ T_I=\exp \liet_I $ is a closed torus in $K$
  since $\liet_I\otimes \C =\lieh_I$ is a Cartan subalgebra of
  $\lies_I$.
  The moment map for the action of $T_I$ on $M$ is
  \begin{gather*}
    \Mom_I : M \ra \liet_I^* \qquad \Mom_I(x): =
    \Mom(x)\restr{\liet_I}.
  \end{gather*}
  Set $\la =\Mom_I(x_0)$.  It is immediate that $ F= \Mom_I \meno
  (\la)$.  By Atiyah theorem \cite[Thm. 1.A]{atiyah-commuting} every
  fibre of $\Mom_I$ is connected, so $F$ is connected.  Let $v_1,
  \lds, v_r$ be a basis of $\liefw$ such that each $v_j$ lies in
  $\liefwp$.  Each function $h_{v_j}$ attains its maximum at $x_0$,
  hence the point $\la$ is a vertex of $\Mom_I(M) \subset \liet_I^*$.
  By the Atiyah-Guillemin-Sternberg convexity theorem $F$ is a
  connected (symplectic) submanifold.  By Lemma \ref{caprette-nondip}
  we have $\deo \Mom(x), v\ode = \deo \Mom(x_0), v\ode $ for any $x\in
  M_W$ and any $v\in \liet_I$, so $\Mom_I(M_W)=\{\la\}$, i.e. $M_W
  \subset F$. Since $M_W$ and $F$ are compact connected submanifolds,
  to conclude that $F=M_W$ it is enough to check that $T_{x_0}F =
  T_{x_0}M_W$.  Denote by $P$ the stabilizer of $x_0$ and by $L=P\cap
  K$ its stabilizer in $K$.  Since $x_0$ is the line of highest weight
  vectors $\liebp\subset \liep$, so $P=P_E$ for some subset $E\subset
  \simple$. Let $\liem$ denote the orthogonal complement of $ \liel$
  inside $ \liek$. Then there is an isomorphism $\liem \cong T_{x_0}M$
  given by $v\mapsto\xi_v(x_0)$.  Since $F$ is the connected component
  of the fixed point set of $T_I$ through $x_0$, the tangent space
  $T_{x_0}F$ is the fixed set of the isotropy representation of $T_I$
  on $T_{x_0}M$.  Via the isomorphism $\liem \cong T_{x_0}M$ the
  isotropy representation is identified with the restriction of $\Ad
  T_I$ on $\liem$.  So $ T_{x_0} F = \liez(\liet_I) \cap \liem $.  We
  claim that $ T_{x_0} M_W = \liek_W \cap \liem $.  To check this it
  is enough to show that
  \begin{gather}
    \label{eq:scompo-liekw}
    \liek_W = \liek_W \cap \liel \oplus \liek_W \cap \liem.
  \end{gather}
  This follows from the following fact: if $T$ is the maximal torus
  corresponding to $\lieh$, then $\liek_W$ and $\liel$ are
  $T$-invariant subspaces of $\liek$ since $T\subset L$ and
  $\liek_W=\liek_I$. So also $\liem$ is $T$-invariant. Therefore both
  summands on the right in \eqref{eq:scompo-liekw} are $T$-invariant
  subspaces and $\liet \cap \liem=\{0\}$. Hence \eqref
  {eq:scompo-liekw} follows from the uniqueness of the real root
  decomposition. We have proved that $ T_{x_0} F = \liez(\liet_I) \cap
  \liem $ and $ T_{x_0} M_W = \liek_W \cap \liem $.  To show that
  $T_{x_0} F = T_{x_0} M_W $, we have to check that $\liez(\liet_I)
  \cap \liem=\liek_W\cap \liem$.  Recall \eqref{base-compatta} and set
  $ \liek_\beta = \R A_\beta + \R B_\beta $ and $ \liet^I=\lieh^I\cap
  \liek$.  Then
  \begin{gather*}
    \liek_W= \liek_I=\liet^I \oplus \bigoplus_{\beta \in \root_I\cap
      \root_+} \liek_\beta
    \\
    \liel = \liek \cap \liep_E = \liet \oplus \bigoplus_{\beta \in
      \root_E\cap \root_+} \liek_\beta\qquad \liem = \bigoplus_{\beta
      \in \root_+ \setminus \root_E} \liek_\beta
  \end{gather*}
  We claim that
  \begin{gather*}
    \liez(\lieh_I) = \lieh \oplus \bigoplus_{\beta \in \root_I}
    \lieg_\beta.
  \end{gather*}
  To prove this it is enough to check that if $\beta \in \root_+$ then
  $\beta \restr{\lieh_I} =0$ if and only if $\beta \in \root_I$. The
  collection $\{ \alfa\restr{\lieh_I} : \alf \not\in I\}$ is a basis
  of $\lieh_I^*$. If $\beta = \sum_{\alf \in \simple} m_\alfa \alf$
  and $\beta\restr{\lieh_I} =0$ then
  \begin{gather*}
    \sum_{\alfa \not \in I} m_\alfa \alfa\restr{\lieh_I} =0
  \end{gather*}
  so $m_\alf =0$ for any $\alfa \not \in I$ and $\beta \in
  \root_I$. The opposite implication is trivial, so the claim is
  proved.  From it we get
  \begin{gather*}
    \liez(\liet_I) = \liet \oplus \bigoplus_{\beta \in \root_I \cap
      \root_+} \liek_\beta\qquad \liez(\liet_I)\cap \liem =
    \bigoplus_{\beta \in \root_I\cap \root_+ \setminus \root_E}
    \liek_\beta = \liek_I \cap \liem.
  \end{gather*}
  Therefore $T_{x_0}M_W = T_{x_0}F$ and $F=M_W$. By \eqref{F=baracca}
  this means that $M_W=\Mom\meno(E_\wdata)$.  Using Lemma \ref{dc-op}
  we get
  \begin{gather*}
    M_W=\Mom\meno(\bigcap_\wdata E_\wdata) = \Mom\meno(E_W).
  \end{gather*}
  This concludes the proof of the theorem.
\end{proof}

We close this section by relating the moment map of $M_W$ to that of
$M$.  Let $W\subset V$ be $\tau$-connected. Denote by $\Mom^W : M_W
\ra \liek_W^*$ the moment map of the smaller flag manifold $(M_W,
\om)$.  Fix a $W$-datum $\data$ and write $ \htau = \zy + \zz $ where
$\zy\in i \liek_W$ and $\zz \in (i\liek_W) ^\perp$.
\begin{lemma}
  \label{zz-in-h}
  $\zz\in \lieh_I$.
\end{lemma}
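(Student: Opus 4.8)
The plan is to decompose $\htau = H_\mut$ along the $B$-orthogonal splitting $\lieh = \lieh_J \operp \lieh^I \operp \lieh^{I'}$ provided by Proposition \ref{W-I} \ref{lieh-decomposition} and to identify the resulting pieces with $\zy$ and $\zz$.

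First I would record two facts. Since $\mut$ is a dominant weight, $\htau = H_\mut$ lies in $\lia$ ($\mut$ is real valued on $\lia$). Moreover $\htau \in \lieh_{I'}$: for each $\alfa \in I'$ one has $\alfa(\htau) = B(\alfa,\mut) = 0$ because $I'$ is orthogonal to $\mut$, so $\htau \in \bigcap_{\alfa\in I'}\ker\alfa = \lieh_{I'} = \lieh_J \operp \lieh^I$, again by Proposition \ref{W-I} \ref{lieh-decomposition}. Write accordingly $\htau = h_0 + h_1$ with $h_0 \in \lieh_J$ and $h_1 \in \lieh^I$. The summands $\lieh_J$ and $\lieh^I$ are $\theta$-stable (they are compatible with the splitting $\lieh = \liet \oplus \lia$, the roots being real on $\lia$), hence from $\htau \in \lia$ we get $h_0, h_1 \in \lia$. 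In particular $h_1 \in \lieh^I \cap \lia \subset i\liek_W$: indeed $\lieh^I$ is the complexification of $\lieh^I \cap \liek$, so $\lieh^I \cap \lia = i(\lieh^I \cap \liek)$, and $\lieh^I \cap \liek \subset \liek_I = \liek_W$ because $\lieh^I$ is a Cartan subalgebra of $\lies_W = \lies_I$ (Proposition \ref{W-I}).

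Next I would check that $h_0$ is $B$-orthogonal to $i\liek_W$. Recall from the proof of Theorem \ref{sticazzi} that $\liek_W = \liek_I = (\lieh^I \cap \liek) \oplus \bigoplus_{\beta \in \root_I \cap \root_+}\liek_\beta$, so $i\liek_W$ is spanned by $i(\lieh^I \cap \liek) \subset \lieh^I$ together with the subspaces $i\liek_\beta \subset \lieg_\beta \oplus \lieg_{-\beta}$, $\beta \in \root_I$. But $h_0 \in \lieh_J$ is $B$-orthogonal to $\lieh^I$ (this is the content of $\operp$ in $\lieh = \lieh_J \operp \lieh^I \operp \lieh^{I'}$) and $B$ vanishes between $\lieh$ and every root space. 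Hence $B(h_0, v) = 0$ for all $v \in i\liek_W$. Thus $\htau = h_1 + h_0$ is exactly the decomposition of $\htau$ along $i\liek_W \oplus (i\liek_W)^\perp$, and by uniqueness $\zy = h_1$ and $\zz = h_0$. Since $h_0 \in \lieh_J \subset \lieh_I$ (because $I \subset J$), we conclude $\zz \in \lieh_I$, as wanted.

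The only point requiring care is the bookkeeping of real structures: one must know that $\htau$, and therefore its components $h_0$ and $h_1$, genuinely lie in $\lia$, so that $h_1$ actually lands in $i\liek_W$ and not merely in its complexification. Everything else is a direct consequence of the decompositions of $\lieh$ and $\liek_W$ already established in Proposition \ref{W-I} and used in the proof of Theorem \ref{sticazzi}; I anticipate no real obstacle.
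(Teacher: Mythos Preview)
Your argument is correct and follows essentially the same route as the paper: split $\htau$ inside the real part of the Cartan subalgebra and observe that one summand lies in $i\liek_W$ while the other is $B$-orthogonal to it. The paper does this more directly, writing $i\liet = i\liet^I \oplus i\liet_I$ with $\liet^I=\lieh^I\cap\liek$ and $\liet_I=\lieh_I\cap\liek$, and noting $i\liet^I\subset i\liek_W$ and $i\liet_I\subset (i\liek_W)^\perp$; your detour through $\htau\in\lieh_{I'}=\lieh_J\operp\lieh^I$ actually gives the slightly sharper conclusion $\zz\in\lieh_J$, but this extra precision is not needed.
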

\begin{proof}
  Set $ \liet = \lieh \cap \liek$, $ \liet^I = \lieh^I \cap \liek$, $
  \liet_I = \lieh_I \cap \liek$.  The two spaces $i\liet_I$,
  $i\liet^I$ are orthogonal to each other and $\htau \in i\liet = i
  \liet^I\oplus i\liet_I$. Moreover $ i\liet^I \subset i \liek_W$ and
  $i \liet_I \subset (i \liek_W)^\perp$.  Therefore $\zz\in i
  \liet_I$.
\end{proof}

Pick $x\in M_W$ and define a map $\thw : \liek_W^* \ra \liek^*$ by the
following recipe: if $\la\in \liek_W^*$ and $v\in \liek$, write $v=v_1
+ v_2$ where $v_1\in \liek_W$ and $v_2 \in \liek_W^\perp$ and set
\begin{gather}
  \label{eq:def-theta-W}
  \thw (\la) (v) := \la (v_1) + \deo \Mom(x) , v_2\ode .
\end{gather}
$\thw$ is an injective affine map.
\begin{lemma}
  \label{momento-ristretto}
  \begin{enumerate}
  \item \label{momento-ristretto-a} The map $\Theta_W$ does not depend
    on the choice of $x\in M_W$.
  \item \label{momento-ristretto-b} If $x\in M_W$, then $ \Mom(x) =
    \thw\bigl (\Momw(x) \bigr) $.
  \end{enumerate}
\end{lemma}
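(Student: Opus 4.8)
The plan is to compute, for $x\in M_W$, the restrictions of $\Mom(x)$ to $\liek_W$ and to $\liek_W^\perp$ separately. Fix a $W$-datum $\data$, put $x_0=[\vut]$ and $\liet=\lieh\cap\liek$, $\liet^I=\lieh^I\cap\liek$, $\liet_I=\lieh_I\cap\liek$. Recall from Proposition~\ref{W-I} that $\liek_W=\liek_I$ and that $\lies_I$ commutes with $\lieh_I$; moreover $\liet^I\subset\liek_W$ while $\liet_I\subset\liek_W^\perp$ (by $B$-orthogonality of $\lieh_I$ and $\lies_I$), and $\liet_I$ is pointwise fixed by $\Ad K_W$ because $[\liek_W,\liet_I]\subset[\lies_I,\lieh_I]=0$. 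Two further facts will be used: (i) $M_W=S_W\cdot x_0=K_W\cdot x_0$, the first equality being Theorem~\ref{orbitina} and the second holding since $M_W$ is compact and $S_W$ is connected with maximal compact $K_W$; (ii) by Proposition~\ref{salute} and the basis \eqref{base-compatta}, $\Mom(x_0)$ annihilates the orthogonal complement $\liet^\perp$ of $\liet$ in $\liek$ (spanned by the $A_\beta,B_\beta$, whose $\lieh$-component vanishes) and $\deo\Mom(x_0),H\ode=-i\mut(H)$ for $H\in\liet$.

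For~\ref{momento-ristretto-a} it suffices to prove that $\deo\Mom(x),v\ode$ does not depend on $x\in M_W$ for each fixed $v\in\liek_W^\perp$. Write $x=a\cdot x_0$ with $a\in K_W$ and split $v=v'+v''$, where $v'$ is the orthogonal projection of $v$ onto $\liet_I$. Then $v''\in\liek_W^\perp\cap\liet_I^\perp$, and since $\liet=\liet^I\oplus\liet_I$ with $\liet^I\subset\liek_W$ (so $\liek_W^\perp\perp\liet^I$), in fact $v''\in\liet^\perp$. As $\Ad(a\meno)$ is orthogonal, fixes $\liet_I$ pointwise and preserves $\liek_W^\perp$, it carries $v''$ back into $\liek_W^\perp\cap\liet_I^\perp\subset\liet^\perp$; hence, using fact (ii),
\begin{gather*}
  \deo\Mom(x),v\ode=\deo\Mom(x_0),\Ad(a\meno)v\ode=\deo\Mom(x_0),v'\ode=-i\mut(v'),
\end{gather*}
which is independent of $a$. (Lemma~\ref{caprette-nondip} alone does not give this, because $\liec_W$ is in general a proper subspace of $\liek_W^\perp$.) The point is that, although $\liek_W^\perp$ is not $\Ad K_W$-stable, the functional $\Mom(x_0)\restr{\liek_W^\perp}$ is $\Ad K_W$-invariant because it is supported on the centralized subspace $\liet_I$.

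For~\ref{momento-ristretto-b}, by part~\ref{momento-ristretto-a} we may take the auxiliary point in the definition of $\thw$ to be $x$ itself, so the assertion reduces to $\deo\Mom(x),v_1\ode=\deo\Momw(x),v_1\ode$ for every $v_1\in\liek_W$. This is a moment-map uniqueness argument: the map $x\mapsto\Mom(x)\restr{\liek_W}$ is $K_W$-equivariant (being the restriction of the $K$-equivariant $\Mom$) and satisfies $d\deo\Mom(\cdot),v_1\ode=-i_{\xi_{v_1}}\om$ along $M_W$, hence is a moment map for the $K_W$-action on $(M_W,\om)$; since $K_W$ is connected and semisimple, $(\liek_W^*)^{K_W}=0$, so the moment map is unique and it equals $\Momw$. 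Combining this with~\ref{momento-ristretto-a}, for $v=v_1+v_2\in\liek_W\oplus\liek_W^\perp$ one gets $\deo\Mom(x),v\ode=\deo\Momw(x),v_1\ode+\deo\Mom(x),v_2\ode=\thw\bigl(\Momw(x)\bigr)(v)$, as desired. The main obstacle is the independence claim in~\ref{momento-ristretto-a}; part~\ref{momento-ristretto-b} is then essentially bookkeeping.
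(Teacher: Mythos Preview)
Your proof is correct. For part~\ref{momento-ristretto-b} you say exactly what the paper says (with a bit more justification for the uniqueness of the moment map). For part~\ref{momento-ristretto-a} your argument is a close variant of the paper's: the paper shows directly that $\Mom(x_0)$ annihilates $[\liek_W,\liek_W^\perp]$ by writing $\htau=\zy+\zz$ with $\zz\in\lieh_I$ (Lemma~\ref{zz-in-h}) and computing $-iB(\htau,[u,v])=-iB(\zz,[u,v])=iB([u,\zz],v)=0$; you instead decompose $v\in\liek_W^\perp$ along $\liet_I\oplus\liet_I^\perp$ and use that $\Mom(x_0)$ is supported on $\liet$ while $\Ad K_W$ fixes $\liet_I$ pointwise. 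Both arguments hinge on the same fact that $\lieh_I$ commutes with $\lies_I$, so the difference is only in packaging. Your version has the virtue of making explicit why the result is stronger than Lemma~\ref{caprette-nondip}, namely that the relevant piece of $\Mom(x_0)\restr{\liek_W^\perp}$ lives on the $\Ad K_W$-fixed subspace $\liet_I$; the paper's version is slightly slicker in avoiding the decomposition of $v$.
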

\begin{proof}
  Fix $x\in M_W$.  To prove \ref{momento-ristretto-a} it is enough to
  show that $\deo\Mom(x), v \ode = \deo \Mom(x_0) , v\ode$ for any
  $v\in \liek_W^\perp$.  There exists $a\in K_W$ such that $x=a \cdot
  x_0$ and $\deo \Mom(x), v\ode = \deo \Mom(x_0),\Ad(a\meno) v\ode $.
  So we need to show that $\deo \Mom(x_0) , \Ad(a\meno) v - v \ode =0$
  for any $a\in K_W$ and any $v\in \liek_W^\perp$.  Since $K_W$ is
  connected it is enough to prove that $ \Mom(x_0)\restr{[\liek_W,
    \liek_W^\perp]} =0$.  Note that $\Ad K_W$ preserves the
  decomposition $\liek=\liek_W \operp \liek_W^\perp$. By
  differentiating we infer that $[\liek_W, \liek_W^\perp] \subset
  \liek_W^\perp$.  So if $v\in \liek_W^\perp$ and $u\in \liek_W$, then
  $[u,v]\in \liek_W^\perp$ and $[u,\zz]=0$ by Lemma \ref{zz-in-h}.
  Therefore using \eqref{eq:momento-x0}
  \begin{gather*}
    \deo \Mom(x_0), [u,v]\ode = -iB(\htau, [u,v]) = -iB(\zz, [u,v]) =
    iB([u,\zz], v) =0.
  \end{gather*}
  This proves \ref{momento-ristretto-a}.  \ref{momento-ristretto-b} is
  simply a restatement of the fact that for any $x\in M_W$, $\Mom^W(x)
  = \Mom(x) \restr{\liek_W}$.
\end{proof}

\section{The Bourguignon-Li-Yau map}

\subsection{Definition and boundary behaviour}
\label{section-BLY-general}

In this section we introduce the Bourguignon-Li-Yau map in the most
general case and we investigate its boundary behaviour in the case of
a flag manifold.  Let $K$ be a connected compact Lie group (not
necessarily semisimple) and let $G=K^\C$ be its complexification.  Set
$X=G/K$, $\liek = \Lie K$, $\lieg = \Lie G$ and denote by $\theta$ the
Cartan involution.  Let $(M,J,g,\om)$ be a compact \Keler manifold and
assume that $G$ acts holomorphically and almost effectively on $M$
(i.e. only finitely many elements of $G$ act trivially on $M$.) Assume
also that the action of $K$ is Hamiltonian with moment map $\Mom: M\ra
\liek^*$.
\begin{defin}\label{def-bly}
  Given a probability measure $\mis$ on $M$ the
  \enf{Bourguignon-Li-Yau map} $\blym : X \ra \liek^*$ is defined by
  \begin{gather}
    \label{eq:def-blym}
    \blym(gK) = \int_M \Mom\bigl(\rho(g)\cdot x \bigr) d\mis(x)
  \end{gather}
  where $\rho(g)=\sqrt{g\theta(g\meno)}$ as in \eqref{eq:df-rho}.
\end{defin}
Recall the following notation: if $F: M \ra N$ is a measurable map
between measurable spaces and $\mu$ is a measure on $M$, then $F\pf
\mu$ denotes the push-forward or image measure.  This means that for a
measurable subset $E\subset N$ we have $F\pf \mu(E): =
\mu\bigl(F\meno( E)\bigr)$.  For any measurable function $\phi$ on $N$
the following change of variable formula holds true
\begin{gather}
  \label{eq:pushforward}
  \int_N \phi(y) d ( F\pf\mu)(y) = \int_M \phi(F(x)) d\mu(x)
\end{gather}
(see e.g.  \cite[p.163]{halmos},
\cite[p.221]{knapp-advanced-analysis}). In particular, if $M$ and $N$
are differentiable manifolds, $F$ is a diffeomorphism and $\mu$ is the
measure associated to a smooth top dimensional form $\eta$, then $
F\pf \mu $ is the measure associated to the form $(F\meno)^* \eta$.
\begin{lemma}\label{lemma-convesso}
  $\blym(X)$ is contained in the convex envelope of $\Mom(M)$ inside
  $\liek^*$.
\end{lemma}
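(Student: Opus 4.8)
The plan is to recognize $\blym(gK)$ as the barycenter of a probability measure supported on the compact set $\Mom(M)$, and then to use the elementary fact that such a barycenter always lies in the closed convex hull of the support. Fix $g\in G$. Since $G$ acts on $M$, the element $\rho(g)\in G$ gives a diffeomorphism $x\mapsto\rho(g)\cdot x$ of $M$, which pushes $\mis$ forward to a Borel probability measure $\nu$ on $M$. By the change of variable formula \eqref{eq:pushforward},
\begin{gather*}
  \blym(gK)=\int_M\Mom(\rho(g)\cdot x)\,d\mis(x)=\int_M\Mom(y)\,d\nu(y),
\end{gather*}
so $\blym(gK)$ is the barycenter of $\Mom\pf\nu$, a probability measure on $\liek^*$ supported on $\Mom(M)$.

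Next I would prove the required fact about barycenters. Since $\Mom(M)$ is compact, $C:=\conv(\Mom(M))$ is compact (Carath\'eodory), hence closed, hence the intersection of all the closed half-spaces $H=\{p\in\liek^*:\ell(p)\leq c\}$ that contain it, where $\ell$ ranges over linear functionals on $\liek^*$ and $c\in\R$. For any such $H$ one has $\ell(p)\leq c$ for all $p\in\Mom(M)$, so, $\nu$ being a probability measure,
\begin{gather*}
  \ell\bigl(\blym(gK)\bigr)=\int_M\ell\bigl(\Mom(y)\bigr)\,d\nu(y)\leq\int_M c\,d\nu(y)=c,
\end{gather*}
that is, $\blym(gK)\in H$. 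Since this holds for every half-space containing $\Mom(M)$, we conclude $\blym(gK)\in C$; and as $g$ was arbitrary, $\blym(X)\subset\conv(\Mom(M))$.

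The whole argument is routine and I do not expect any genuine obstacle. The only points deserving a word of care are that $\rho(g)$ really does act on $M$ (which holds because $G$ acts on $M$ in the present setup), so that the pushforward of a probability measure is again a probability measure, and that the convex envelope of the compact set $\Mom(M)$ is itself compact, hence closed — this last point is what legitimizes the supporting-halfspace description of $C$ and so underlies the separation step above.
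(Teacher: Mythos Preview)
Your proof is correct and follows essentially the same approach as the paper: push forward $\mis$ by the action of $\rho(g)$ and then by $\Mom$ to obtain a probability measure on $\liek^*$ supported on $\Mom(M)$, and observe that $\blym(gK)$ is its barycenter. The only difference is that the paper simply asserts that the barycenter ``of course lies in the convex envelope of $\Mom(M)$'', whereas you spell out this elementary fact via supporting half-spaces; your extra care is harmless and the argument is sound.
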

\begin{proof}
  Let $F: M \ra M$ denote multiplication by $\rho(g)$.  By formula
  \eqref{eq:pushforward}
  \begin{gather*}
    \bly_\mis(gK) = \int_M \Mom(F(x)) d\mis(x) = \int_M \Mom(y) d
    (F\pf\mis)(y) = \int_{\Mom(M)} z \ d\left((\Mom\circ F \right) \pf
    \mis)(z).
  \end{gather*}
  $(\Mom\circ F) \pf \mis$ is a probability measure supported on
  $\Mom(M)$ and $\blym(gK)$ is its barycenter, which of course lies in
  the convex envelope of $\Mom(M)$.
\end{proof}
Let $G, K, \tau, \scalo$ be as at p. \pageref{data}, and let $M$ be
the associated flag manifold. Then $M$ is endowed with all the
structures needed to define the Bourguignon-Li-Yau map, so for any
probabily measure $\mis$ on $M$ there is a map $ \blym: X:=G/K \ra
\liek^* $ as in \eqref{eq:def-blym}.  Recall that $ \OO :=\Mom(M)
\subset \liek^* $ is a coadjoint orbit and that $\Mom: M \ra \OO$ is
an equivariant symplectomorphism. Let $\convo$ denote the convex hull
of $\OO$ and let $\intec $ denote the interior of $\convo$.
\begin{lemma}
  \label{intec-nonvuoto}
  (a) $0\in \intec$, (b) $\intec$ is a nonempty open convex domain,
  (c) $\overline{\intec}=\convo$, (d) $\convo$ is compact.
\end{lemma}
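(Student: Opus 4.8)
The four assertions are of mixed nature: (a) is a symmetry statement, (b) and (c) follow from (a) together with general convexity facts, and (d) is compactness of $\OO$. I would organize the proof around (a) and (d), deducing (b) and (c) as consequences.

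First I would prove (d). Since $M$ is compact and $\Mom : M \to \liek^*$ is continuous, $\OO = \Mom(M)$ is compact, hence bounded; therefore its convex hull $\convo$ is bounded, and in finite dimension the convex hull of a compact set is compact (e.g.\ by Carath\'eodory's theorem, $\convo$ is the continuous image of the compact set $\OO^{\,\dim\liek^* + 1} \times \Delta^{\dim\liek^*}$ under the barycenter map). This gives (d). Next, for (a) I would use that $K$ is semisimple, so $\liek = [\liek,\liek]$ and $\liek^*$ carries the coadjoint action with no nonzero fixed vectors other than $0$; equivalently $(\liek^*)^K = 0$. Now $\OO$ is a single coadjoint orbit, hence $\Ad^*(K)$-invariant, so $\convo$ is $\Ad^*(K)$-invariant as well (convex hull of an invariant set). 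Averaging: the barycenter $b := \int_K \Ad^*(a)\,z_0 \, da$ of $\OO$ with respect to Haar measure, for any fixed $z_0 \in \OO$, lies in $\convo$ and is $K$-fixed, hence $b = 0$; so $0 \in \convo$. To upgrade ``$0 \in \convo$'' to ``$0 \in \intec$'', I would argue that $\OO$ spans $\liek^*$ as an affine space (indeed as a linear space): the linear span of $\OO$ is an $\Ad^*(K)$-invariant subspace of $\liek^*$, and since $\liek$ is semisimple the coadjoint representation decomposes into nontrivial irreducibles, each of which must meet $\OO$ nontrivially because $\OO$ generates — more directly, if $\OO$ lay in a proper subspace $H$, then $\Mom$ composed with a nonzero linear functional vanishing on $H$ would be a nonconstant (since $\Mom$ is an embedding onto $\OO$, which then can't be a point unless $\liek^* = 0$) moment-map component that is constant, a contradiction once one knows $\OO \neq \{0\}$. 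Having $\OO$ affinely spanning $\liek^*$, its convex hull has nonempty interior; combined with the $K$-invariance and $0 \in \convo$, one concludes $0$ is in the interior by a standard averaging argument (if $\convo$ has nonempty interior and is $K$-invariant, then averaging any interior point over $K$ produces a $K$-fixed interior point, which must be $0$). This proves (a).

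With (a) in hand, (b) is immediate: $\intec$ is by definition the interior of the convex set $\convo$, it is nonempty (it contains $0$), open, and convex (the interior of a convex set in $\R^n$ is convex). Finally (c): for any convex set $C$ in $\R^n$ with nonempty interior one has $\overline{\inte C} = \overline{C}$, and since $\convo$ is closed — it is compact by (d) — we get $\overline{\intec} = \overline{\convo} = \convo$. This handles (c).

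The only genuine point requiring care is the claim that $\OO$ affinely spans $\liek^*$, i.e.\ that $\convo$ has nonempty interior; everything else is either a one-line invocation of a standard convexity lemma or the averaging trick. I would phrase that step as: the linear span $L$ of $\OO$ in $\liek^*$ is $\Ad^*(K)$-invariant; if $L \neq \liek^*$, pick $0 \neq \xi \in \liek$ annihilating $L$, so the component $h_\xi = \langle \Mom, \xi\rangle$ of the moment map vanishes identically on $M$; but then $i_{\xi_\xi}\om = -dh_\xi = 0$, and since $\om$ is nondegenerate the fundamental vector field $\xi_\xi$ vanishes, i.e.\ $\xi$ acts trivially on $M$; as the $K$-action on $M$ is almost effective and $\liek$ is semisimple this forces $\xi = 0$, a contradiction. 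Hence $L = \liek^*$ and $\convo$ is full-dimensional. I expect this $i_{\xi_\xi}\om = 0 \Rightarrow \xi_\xi = 0 \Rightarrow \xi = 0$ argument, using almost-effectiveness and semisimplicity, to be the main (though still short) obstacle.
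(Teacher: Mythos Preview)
Your argument is correct, but the paper proves (a) by a different and more direct route. Instead of your three-step approach (averaging to get $0\in\convo$, almost-effectiveness to get full-dimensionality, averaging again to get $0\in\intec$), the paper shows in one stroke that for every nonzero $v\in\liek$ the component $h_v=\langle\Mom,v\rangle$ changes sign on $M$: since $\liek=[\liek,\liek]$, one writes $v=\sum_i[u_i,w_i]$ and uses equivariance of $\Mom$ to express $h_v=\sum_i\{h_{w_i},h_{u_i}\}$ as a sum of Poisson brackets, which have zero mean on a compact symplectic manifold; hence $h_v$ cannot keep a constant sign. This immediately says $\OO$ is not contained in any closed halfspace through the origin, i.e.\ $0\in\intec$. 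Your approach has the merit of being more geometric and of not invoking the Poisson-bracket identity, relying instead only on compact-group averaging and the nondegeneracy of $\om$; the paper's approach is shorter and yields the stronger intermediate statement $\int_M h_v\,\om^n=0$ for all $v$. Parts (b), (c), (d) are handled the same way in both.
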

\begin{proof}
  (a) For $v\in \liek$ set $h_v=\deo \Mom, v\ode$.  Since $\liek =
  [\liek, \liek]$, $v=\sum_{i=1}^k[u_i,w_i]$ for some $u_i, w_i \in
  \liek$, so by the equivariance of the moment map
  \begin{gather*}
    h_v = \sum_{i=1}^k \deo \Mom, [u_i,w_i]\ode = \sum_{i=1}^k
    \xi_{w_i} h_{u_i} = \sum_{i=1}^k \{ h_{w_i}, h_{u_i}\}.
  \end{gather*}
  On the other hand, on a compact symplectic manifold Poisson brackets
  have zero mean:
  \begin{gather*}
    \int_M \{f,g\} \om^n = \int _M (X_f g) \om^n = \int_M L_{X_f} (
    g\om^n) = \int_M d\, i_{X_f} ( g\om^n) =0.
  \end{gather*}
  Hence any component $h_v$ of the moment map has zero mean.
  Therefore if $v\neq 0$ the function $h_v$ necessarily changes
  sign. This shows that $\OO$ is not contained in any halfspace with
  the origin on its boundary, which proves (a). (b) and (c)
  immediately follow.  To prove (d) it is enough to observe that $\OO$
  is compact.
\end{proof}

\begin{defin}
  We say that a probability measure $\mis$ on $M$ is \enf{
    $\tau$-admissible} if it does not charge the hyperplane sections
  of $M\subset \PP(V)$, i.e. if $\mis(H \cap M) =0$ for any hyperplane
  $H\subset \PP(V)$.
\end{defin}
This condition is of course satisfied by any measure that is
absolutely continuous with respect to the Riemannian measure. More
generally, if $Z\subset M$ is a complex submanifold such that
$Z\subset \PP(V)$ is full (i.e. $Z$ is not contained in any
hyperplane) and $\mis$ is a smooth measure on $Z$, then $\mis$, seen
as a measure on $M$, is $\tau$-admissible.

\begin{lemma}\label{convergenza-qo}
  Assume that $p_n \to p$ in $\XS$ and let $\ratp_n, \ratp$ be as in
  \eqref{eq:def-ratp}. If $\mis $ is $\tau$-admissible, then $ \ratp_n
  \to \ratp$ $\mis$-a.e.
\end{lemma}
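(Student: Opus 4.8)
The plan is to reduce the statement to the continuity of the operator square root (Lemma~\ref{sqrt-cont}) together with the $\tau$-admissibility of $\mis$, which is exactly what renders the relevant indeterminacy loci negligible.  First I would use Theorem~\ref{ziasti} to write $p\in\XW$ for the unique $\tau$-connected subspace $W$, so that $p=i_W(gK_W)$ for some $g\in S_W$, and $p_n\in\XWn$ for some $\tau$-connected $W_n$.  By the definition of $i_W$ the point $p$ is represented by $B_W:=\tau_W(g)\tau_W(g)^*\oplus 0$, which is positive definite on $W$ and zero on $W^\perp$; hence every positive semidefinite representative of $p$ has kernel exactly $W^\perp$, and similarly for $p_n$ with $W_n^\perp$.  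Consequently, by the definition of $\ratp$ and Lemma~\ref{rat-map}\ref{sqrat-indeterminacy},\ref{sqrat-5}, for any positive semidefinite representative $A$ of $p$ the map $\ratp$ is the restriction to $M$ of the rational map $R_p$ given by $R_p([v])=[\sqrt{A}\,v]$, which sends $M\setminus\PP(W^\perp)$ into $M$ and has indeterminacy locus $\PP(W^\perp)$; likewise $\ratp_n=R_{p_n}|_M$ with $R_{p_n}([v])=[\sqrt{A_n}\,v]$ and indeterminacy locus $\PP(W_n^\perp)$.

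Next I would normalise.  Since $\XS\subset\overline{\Pos(V)}$ and, by Lemma~\ref{prop-PV}(c), $\pi$ restricts to a homeomorphism of $\{B\in\Herm(V):B\geq 0,\ \tr B=1\}$ onto $\overline{\Pos(V)}$, I may choose the representatives $A$ of $p$ and $A_n$ of $p_n$ to have unit trace; then, through the inverse of that homeomorphism, $p_n\to p$ yields $A_n\to A$ in $\Herm(V)$, and hence $\sqrt{A_n}\to\sqrt{A}$ in $\semv$ by Lemma~\ref{sqrt-cont}.  (Passing to a positive multiple of a representative alters neither its kernel nor the induced rational map, so this normalisation is harmless.)  Now fix $x=[v]\in M$ with $v\notin W^\perp$, i.e.\ a point off the indeterminacy locus of $\ratp$; then $\sqrt{A}\,v\neq 0$.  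From $\sqrt{A_n}\,v\to\sqrt{A}\,v\neq 0$ it follows that $\sqrt{A_n}\,v\neq 0$ for all large $n$, i.e.\ $v\notin W_n^\perp$, so $\ratp_n(x)=[\sqrt{A_n}\,v]$ is defined for such $n$ and, by continuity of the projection $V\setminus\{0\}\to\PP(V)$, it converges to $[\sqrt{A}\,v]=\ratp(x)$.  As all these points lie in the closed subset $M\subset\PP(V)$, this is convergence in $M$.  Thus $\ratp_n(x)\to\ratp(x)$ for every $x\in M\setminus\PP(W^\perp)$.

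Finally I would check that the exceptional set $\PP(W^\perp)\cap M$ is $\mis$-null.  If $W=V$ it is empty; if $W\subsetneq V$, then $W^\perp\neq\{0\}$, so $\PP(W^\perp)$ is a proper linear subspace of $\PP(V)$ and is contained in some hyperplane $H$, whence $\mis(\PP(W^\perp)\cap M)\leq\mis(H\cap M)=0$ by $\tau$-admissibility.  This proves $\ratp_n\to\ratp$ $\mis$-a.e.  The one genuinely delicate point — and the main obstacle — is that the indeterminacy loci $\PP(W_n^\perp)$ move with $n$, so one cannot fix a single full-measure set in advance; the argument gets around this via the relation $\sqrt{A_n}\,v\to\sqrt{A}\,v\neq 0$, which forces every fixed $x\notin\PP(W^\perp)$ to lie outside $\PP(W_n^\perp)$ for all large $n$.
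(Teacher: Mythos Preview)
Your proof is correct and follows essentially the same approach as the paper: normalise the representatives via Lemma~\ref{prop-PV}(c), invoke Lemma~\ref{sqrt-cont} to get $\sqrt{A_n}\to\sqrt{A}$, and then read off convergence in $\PP(V)$ outside a $\mis$-null set. The only organisational difference is in how the exceptional set is handled: the paper simply discards the countable union $N=\bigcup_n\PP(W_n^\perp)\cup\PP(W^\perp)$ at the outset (which is $\mis$-null as a countable union of hyperplane sections), whereas you excise only $\PP(W^\perp)$ and argue that any fixed $x\notin\PP(W^\perp)$ eventually lies outside $\PP(W_n^\perp)$ because $\sqrt{A_n}v\to\sqrt{A}v\neq 0$. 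Both are valid; your version has the mild advantage of making explicit why the moving indeterminacy loci cause no trouble, while the paper's version is shorter.
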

\begin{proof}
  By Lemma \ref{prop-PV} (c) we can find unique $A_n, A\in \Herm(V)$
  such that $\tr A_n= \tr A=1$, $p_n=[A_n]$, $p=[A]$ and $A_n\to A$ in
  $\End V$.  Assume $p_n\in \XWn$ and $p\in \XW$. The set
  $N=\bigcup_n\PP(W_n^\perp)\cup \PP(W^\perp)$ has $\mis$-measure zero
  since $\mis$ is $\tau$-admissible.  By Lemma \ref{sqrt-cont}
  $\sqrt{A_n} \to \sqrt{A}$.  If $x=[v]\in M\setminus N$, then
  $\ratp_n(x)= [\sqrt{A_n} v] \to [\sqrt{A}v]=\ratp(x)$.  This
  concludes the proof.
\end{proof}

\begin{teo}
  \label{estendo}
  For any $\tau$-admissible probability measure $\mis$ on $M$ the map
  $\blym$ admits a continuous extension to $\XS$ that we still denote
  by $\blym$.  The extension is unique and satisfies $\blym(\XS)
  \subset \convo$.  If $p\in X_W$, then
  \begin{gather}
    \label{eq:blym-ext}
    \blym(p) = \int_M \Mom(\ratp (x)) d\mis (x).
  \end{gather}
\end{teo}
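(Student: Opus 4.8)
The plan is to establish the extension formula \eqref{eq:blym-ext} first as a candidate definition on the boundary and then show it is continuous. First I would define, for $p\in\XS$ arbitrary, the function $\tilde\bly(p):=\int_M \Mom(\ratp(x))\,d\mis(x)$, where $\ratp$ is the rational self-map of $M$ from \eqref{eq:def-ratp}; by Lemma \ref{rat-map} \ref{sqrat-5} this is defined $\mis$-a.e.\ because $\mis$ is $\tau$-admissible and the indeterminacy locus of $\ratp$ is contained in a hyperplane section $M\cap\PP(W^\perp)$. On the open part $X=i_V(X_V)\subset\XS$ one has $\ratp(x)=\rho(g)\cdot x$ for $p=i_V(gK)$ (by Lemma \ref{rat-map} \ref{sqrat-3b} with $W=V$), so $\tilde\bly$ agrees with $\blym$ on $X$. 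Thus it suffices to prove $\tilde\bly$ is continuous on all of $\XS$; uniqueness of the extension is then automatic since $X$ is dense in $\XS$, and the inclusion $\blym(\XS)\subset\convo$ follows from Lemma \ref{lemma-convesso} applied pointwise together with the fact that $\convo$ is closed (Lemma \ref{intec-nonvuoto}).

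Second, I would prove continuity by a dominated-convergence argument. Take $p_n\to p$ in $\XS$. By Lemma \ref{convergenza-qo}, $\ratp_n\to\ratp$ $\mis$-almost everywhere on $M$. Since $\Mom:M\to\liek^*$ is continuous and $M$ is compact, $\Mom$ is bounded, so $\Mom(\ratp_n(x))\to\Mom(\ratp(x))$ for $\mis$-a.e.\ $x$ and the integrands are uniformly bounded by a constant (the measure $\mis$ is a probability measure). The dominated convergence theorem then gives $\tilde\bly(p_n)=\int_M\Mom(\ratp_n(x))\,d\mis(x)\to\int_M\Mom(\ratp(x))\,d\mis(x)=\tilde\bly(p)$. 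This shows $\tilde\bly$ is continuous, hence is the desired continuous extension of $\blym$, and that \eqref{eq:blym-ext} holds for $p\in X_W$.

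The main obstacle is making precise the almost-everywhere convergence $\ratp_n\to\ratp$ when the $p_n$ lie in possibly different boundary components $X_{W_n}$ with indeterminacy loci $\PP(W_n^\perp)$ varying with $n$; this is exactly the content of Lemma \ref{convergenza-qo}, whose proof hinges on choosing the representatives $A_n,A\in\Herm(V)$ with $\tr A_n=\tr A=1$ so that $A_n\to A$ genuinely in $\End(V)$ (Lemma \ref{prop-PV}(c)), then on the continuity of the square root (Lemma \ref{sqrt-cont}), and finally on the observation that the countable union $\bigcup_n\PP(W_n^\perp)\cup\PP(W^\perp)$ is $\mis$-null because each $\PP(W_k^\perp)$ meets $M$ in a hyperplane section and $\mis$ is $\tau$-admissible. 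Once that lemma is in hand the rest is routine measure theory, so I expect the write-up to be short: cite Lemma \ref{convergenza-qo} for a.e.\ convergence, invoke compactness of $M$ for the uniform bound, apply dominated convergence, and deduce uniqueness and the inclusion in $\convo$ from density of $X$ and closedness of $\convo$.
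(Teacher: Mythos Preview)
Your proposal is correct and follows essentially the same route as the paper: define the extension by \eqref{eq:blym-ext}, check it agrees with $\blym$ on $X$, invoke Lemma \ref{convergenza-qo} for $\mis$-a.e.\ convergence of $\ratp_n\to\ratp$, and conclude by dominated convergence; uniqueness follows from density of $X$, and the inclusion in $\convo$ from Lemma \ref{lemma-convesso} plus continuity. The only cosmetic difference is that the paper cites parts \ref{sqrat-indeterminacy} and \ref{ratiosu} of Lemma \ref{rat-map} rather than \ref{sqrat-5} and \ref{sqrat-3b}, and deduces $\blym(\XS)\subset\convo$ by passing to the closure rather than by a direct barycenter argument at boundary points.
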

\begin{proof}
  Let $p\in \XW$.  By Lemma \ref{rat-map} \ref{sqrat-indeterminacy}
  the rational map $\ratp $ is defined outside $\PP(W^\perp)$. Since
  $\mis$ is $\tau$-admissible $M \setminus \PP(W^\perp)$ has full
  measure. By Lemma \ref{rat-map} \ref{ratiosu} $\ratp(M\setminus
  \PP(W)) \subset M_W \subset M$, so the function $x\mapsto
  \Mom(\ratp(x))$ is defined $\mis$-a.e. on $M$ and is bounded, hence
  integrable w.r.t. $\mis$. This shows that formula
  \eqref{eq:blym-ext} yields a well-defined map from $\XS$ to
  $\liek^*$.  If $W=V$ and $x=[v] \in M$, then $\ratp([v]) =
  [\sqrt{\tau(g) \tau(g)^*} v] = \rho(g)\cdot x$. Therefore
  \eqref{eq:blym-ext} agrees with \eqref{eq:def-blym} and yields an
  extension to $\XS$ of the map previously defined on $X$.  Let
  $\{p_n\} \subset \XS$ be a sequence converging to some point $p$.
  By Lemma \ref{convergenza-qo} $ \Mom\circ\ratp_n \to \Mom\circ
  \ratp$ $\mis$-a.e.  Since $\Mom$ is bounded on $M$ the integrals
  converge by the dominated convergence theorem. This shows that the
  map defined by \eqref{eq:blym-ext} is continuous on $\XS$.  The
  uniqueness of a continuous extension is obvious since $X$ is dense
  in $\XS$.  By Lemma \ref{lemma-convesso} $\blym(X) \subset \convo$,
  hence by continuity
  $\blym(\XS)\subset \convo$.
\end{proof}

Now fix a \tauc subspace $W\subsetneq V$.  If $\nu$ is a probability
measure on $M_W$ denote by
\begin{gather*}
  \bly_\nu^W : X_W \ra \liek_W^*
\end{gather*}
the Bourguignon-Li-Yau map of $M_W$ endowed with the structures
induced from $M$.
\begin{lemma}
  \label{lemma-traslo}
  For any $q \in X_W$,
  \begin{gather}
    \label{eq:blycmis}
    \blym(i_W(q)) = \thw\Bigl ( \bly^W_{\cmis} (q) \Bigr)
  \end{gather}
  where $\cmis : = (\Pii)\pf \mis$ is the push-forward probability
  measure on $M_W$ and $\Theta_W$ is defined in
  \eqref{eq:def-theta-W}.  If $\mis$ is the $K$-invariant probability
  measure on $M$, then $\cmis$ is the $K_W$-invariant probability
  measure on $M_W$.
\end{lemma}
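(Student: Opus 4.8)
The plan is to verify \eqref{eq:blycmis} by unwinding all the definitions and reducing to the relation $R_p = L_{\rho(g)}\circ \Piw$ on $M_W$ established in Lemma~\ref{rat-map}~\ref{sqrat-5}, together with the compatibility of moment maps $\Mom(x)=\thw(\Momw(x))$ for $x\in M_W$ from Lemma~\ref{momento-ristretto}~\ref{momento-ristretto-b}. First I would write $q=gK_W$ for some $g\in S_W$, so that $i_W(q)=[\tau_W(g)\tau_W(g)^*\oplus 0]\in \XW\subset\XS$. By the extension formula \eqref{eq:blym-ext} of Theorem~\ref{estendo},
\begin{gather*}
  \blym(i_W(q)) = \int_M \Mom\bigl(\ratp(x)\bigr)\, d\mis(x),
\end{gather*}
where $p=i_W(q)$. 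Since $\mis$ is $\tau$-admissible, the set $\PP(W^\perp)\cap M$ is $\mis$-null, so on a set of full $\mis$-measure we may use Lemma~\ref{rat-map}~\ref{sqrat-5} to write $\ratp(x)=L_{\rho(g)}(\Piw(x))$, with $\Piw(x)\in M_W$ by Theorem~\ref{orbitina}. Hence $\ratp(x)\in M_W$ for $\mis$-a.e.\ $x$, and we may apply the moment map compatibility: $\Mom(\ratp(x)) = \thw\bigl(\Momw(\ratp(x))\bigr)$.

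Next I would push the computation entirely inside $M_W$. Since $\thw$ is an affine (indeed injective affine) map from $\liek_W^*$ to $\liek^*$ and $\mis$ is a probability measure, the integral commutes with $\thw$:
\begin{gather*}
  \blym(i_W(q)) = \thw\Bigl( \int_M \Momw\bigl(L_{\rho(g)}\Piw(x)\bigr)\, d\mis(x) \Bigr).
\end{gather*}
Here the subtle point is that $\thw$ is only affine, not linear, so one must use that $\int_M d\mis = 1$ when pulling it out of the integral; I would make that explicit. Now apply the change of variables formula \eqref{eq:pushforward} with the map $\Piw : M\setminus\PP(W^\perp)\to M_W$ and $\cmis := (\Piw)\pf\mis$: the inner integral becomes $\int_{M_W}\Momw(L_{\rho(g)}(y))\, d\cmis(y)$. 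By the definition of the Bourguignon-Li-Yau map for $M_W$ (Definition~\ref{def-bly} applied to $M_W$ with its induced structures, noting $\rho_{S_W}(g)=\rho(g)$ since $\Sim\cap S_W$ is cut out by the same condition $\theta(g)=g\meno$), this is exactly $\bly^W_{\cmis}(gK_W)=\bly^W_{\cmis}(q)$. Combining, $\blym(i_W(q)) = \thw\bigl(\bly^W_{\cmis}(q)\bigr)$, which is \eqref{eq:blycmis}.

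For the last assertion, suppose $\mis$ is the $K$-invariant probability measure on $M$. I would argue that $\cmis=(\Piw)\pf\mis$ is $K_W$-invariant on $M_W$: the map $\Piw$ restricted to $M\setminus\PP(W^\perp)$ is $S_W$-equivariant (as noted after \eqref{eq:def-ratw}, since $K_W$ preserves the splitting $V=W\oplus W^\perp$), hence $K_W$-equivariant; and since $K_W\subset K$ and $\mis$ is $K$-invariant, $a\pf\mis=\mis$ for every $a\in K_W$, so $a\pf\cmis = a\pf(\Piw\pf\mis) = \Piw\pf(a\pf\mis)=\Piw\pf\mis=\cmis$. As $\cmis$ is a $K_W$-invariant probability measure on the homogeneous space $M_W$, it is \emph{the} $K_W$-invariant probability measure by uniqueness of Haar measure. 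The main obstacle, though a minor one, is bookkeeping the null sets and the affine-versus-linear distinction correctly; there is no deep difficulty, since Theorem~\ref{estendo}, Lemma~\ref{rat-map}, Lemma~\ref{momento-ristretto} and Theorem~\ref{orbitina} already do the substantive work.
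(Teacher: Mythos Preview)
Your proof is correct and follows essentially the same approach as the paper's: write $p=i_W(gK_W)$, use $\ratp=L_{\rho(g)}\circ\Piw$ from Lemma~\ref{rat-map}~\ref{sqrat-5}, combine the push-forward formula with the moment-map compatibility $\Mom=\thw\circ\Momw$ on $M_W$ from Lemma~\ref{momento-ristretto}, and pull the affine map $\thw$ through the probability integral. The only cosmetic difference is the order of operations (the paper pushes forward to $M_W$ first and then applies $\thw\circ\Momw$, whereas you apply $\thw\circ\Momw$ first and then push forward), and you are slightly more careful than the paper in spelling out why the affine map commutes with the integral and why $\cmis$ is $K_W$-invariant.
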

\begin{proof}
  Assume $q=gK_W$ for $g\in S_W$. Then
  $p:=i_W(q)=[\tau_W(g)\tau_W(g)^* \oplus 0]\in \XS$.  Recall from
  Lemma \ref{rat-map} \ref{sqrat-5} that for any $x\in M \setminus
  \PP(W)$ we have $\ratp (x) = \rho(g) \cdot \Piw(x)$. Therefore
  \begin{gather}
    \label{eq:blymgtheta}
    \blym(p) = \int_M \Mom (\ratp(x)) d\mis (x) = \int_M \Mom \bigl
    (\rho(g)\cdot \Pii(x) \bigr) d\mis (x) = \int_{M_W} \Mom
    (\rho(g)\cdot y) d\cmis (y)
  \end{gather}
  Using Lemma \ref{momento-ristretto} we get
  \begin{gather*}
    \blym(p) = \int_{M_W} \Mom (\rho(g)\cdot y) d\cmis (y)= \int_{M_W}
    \thw \Bigl ( \Mom^W (\rho(g) \cdot y) \Bigr )
    d\cmis (y) = \\
    = \thw \biggl ( \int_{M_W} \Mom^W (\rho(g) \cdot y) d\cmis (y)
    \biggr ) = \thw \bigl ( \bly_{\cmis}^W(q) \bigr).
  \end{gather*}
  This proves \eqref{eq:blycmis}. To prove the last statement it is
  enough to take into account that the map $\Piw $ is
  $K_W$-equivariant.
\end{proof}
\begin{remark}
  If we use the scalar product $-B$ to identify $\liek^*$ with
  $\liek$, then formula \eqref{eq:blycmis} becomes simply $ \blym(p) =
  \bly_{\cmis}^W (p) + iZ_W$.  This means that the restriction of
  $\blym$ to any piece of the form $\XW$ is just a translation (by
  $iZ_W$) of the Bourguignon-Li-Yau map on the corresponding $M_W$
  with respect to a suitable measure.  In the case where $\ga$ is the
  $K$-invariant probability measure on $M$, this is particularly neat,
  since in that case $\cmis$ is just the $K_W$-invariant probability
  measure on $M_W$.
\end{remark}

\begin{prop}
  \label{bordo-bordo}
  Let $W$ be a proper $\tau$-connected subspace.  Then $\blym(\XS)
  \subset \ccim$ and
  \begin{gather}
    \label{eq:bb}
    \blym\meno (E_W) =\bigsqcup_{W'\subset W} \XWp = \overline{ \XW} .
  \end{gather}
\end{prop}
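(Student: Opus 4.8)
The plan is to prove the two assertions separately, the first being immediate and the second resting on one key equivalence. Since $\mis$ is $\tau$-admissible, $\blym$ extends continuously to $\XS$ and $\blym(p)=\int_M\Mom(\ratp(x))\,d\mis(x)$ (Theorem~\ref{estendo}). For the first assertion: by Theorem~\ref{estendo} one has $\blym(\XS)\subset\convo$, and by Theorem~\ref{sticazzi} the orbit $\OO=\Mom(M)$ lies in $\ccim$, which is closed and convex since by \eqref{eq:def-CW} it is an intersection of half\-spaces; hence $\convo\subset\ccim$ and $\blym(\XS)\subset\ccim$. The content of \eqref{eq:bb} is the equivalence, valid for every $p\in\XS$,
\begin{gather}
\label{eq:bbplan}
\blym(p)\in E_W\quad\Longleftrightarrow\quad\ratp(x)\in M_W\ \text{ for $\mis$-a.e.\ }x\in M.
\end{gather}

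To prove \eqref{eq:bbplan} I would observe that $\blym(p)$ is the barycentre of the probability measure $\nu:=(\Mom\circ\ratp)\pf\mis$, which is carried by $\OO\subset\ccim$. By Lemma~\ref{rupicapra}, $E_W=\{\la\in\liek^*:\la(v)=\deo\Mom(x),v\ode\text{ for all }v\in\A\}$ for any fixed $x\in M_W$. Fix $v\in\A$; then $v\in\liefwp$ for some $W$-datum, and from $\OO\subset\ccim\subset C_\wdata$ (Theorem~\ref{sticazzi}) we get $\deo\Mom(y),v\ode\le\deo\Mom(x),v\ode$ for all $y\in M$; integrating against $\mis$, $\deo\blym(p),v\ode\le\deo\Mom(x),v\ode$, with equality iff the function $x\mapsto\deo\Mom(\ratp(x)),v\ode$ is $\mis$-a.e.\ constant. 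Picking $v_1,\dots,v_m\in\A$ spanning $\liec_W$ and intersecting the corresponding conull sets, one concludes that $\blym(p)\in E_W$ iff $\Mom(\ratp(x))\in E_W$ for $\mis$-a.e.\ $x$, i.e.\ iff $\ratp(x)\in\Mom\meno(E_W)=M_W$ for $\mis$-a.e.\ $x$, the last equality being Theorem~\ref{sticazzi} once more.

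Next I would identify the right\-/hand side of \eqref{eq:bbplan}. By Theorem~\ref{ziasti} there is a unique $\tau$-connected $W'$ with $p\in\XWp$; write $p=i_{W'}(gK_{W'})$, $g\in S_{W'}$, and set $g_1=\rho(g)\in S_{W'}$. If $W'\subset W$, then $M_{W'}\subset M_W$ by Corollary~\ref{MW-determina}, while $\ratp(x)\in M_{W'}$ for every $x\notin\PP((W')^\perp)$ by Lemma~\ref{rat-map}\,\ref{ratiosu} --- a $\mis$-conull set --- so the right side of \eqref{eq:bbplan} holds. Conversely, by Lemma~\ref{rat-map}\,\ref{sqrat-5} the map $\ratp$ is $L_{g_1}\circ\check\pi_{W'}$, with $\check\pi_{W'}:M\setminus\PP((W')^\perp)\ra M_{W'}$ the projection and $L_{g_1}$ the automorphism of $M_{W'}$ induced by $g_1$; hence for $x=[v]$ with $v=v'+v''$, $v'\in W'$, $v'\neq0$, $v''\in(W')^\perp$, one has $\ratp(x)=[\tau_{W'}(g_1)\,v']$, so $\ratp(x)\in M_W$ iff $\tau_{W'}(g_1)\,v'\in W\cap W'$, iff $v'\in W'':=\tau_{W'}(g_1\meno)(W\cap W')$, iff $x\in\PP(W''\oplus(W')^\perp)$. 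Thus $\{x\notin\PP((W')^\perp):\ratp(x)\in M_W\}=\bigl(M\cap\PP(W''\oplus(W')^\perp)\bigr)\setminus\PP((W')^\perp)$. If $W'\not\subset W$, then $W\cap W'\subsetneq W'$, so $W''\subsetneq W'$ (as $\tau_{W'}(g_1\meno)$ is invertible on $W'$), so $W''\oplus(W')^\perp$ is a proper subspace of $V$ and $M\cap\PP(W''\oplus(W')^\perp)$ is contained in a hyperplane section of $M$, hence $\mis$-null; so the right side of \eqref{eq:bbplan} fails. With \eqref{eq:bbplan} and Theorem~\ref{ziasti} this yields $\blym\meno(E_W)=\bigsqcup_{W'\subset W}\XWp$. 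Finally, $\bigsqcup_{W'\subset W}\XWp=\overline{\XW}$: as $\psi$ is a closed embedding and $\XW=\psi(i_{\tau_W}(X_W))$, the closure of $\XW$ in $\XS$ is the image under $\psi$ of the Satake compactification of $X_W=S_W/K_W$ attached to $\tau_W$, which by Theorem~\ref{ziasti} for the data $(S_W,K_W,\tau_W)$ is the union of the $\XWp$ over the $\tau_W$-connected $W'\subset W$; and from \S\ref{tauconnessi} and Proposition~\ref{W-I} these are precisely the $\tau$-connected subspaces of $V$ contained in $W$. (The inclusion $\overline{\XW}\subset\blym\meno(E_W)$ is anyway automatic, since $\blym\meno(E_W)$ is closed and contains $\XW$.)

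I expect the main obstacle to be the converse part of the third paragraph, namely deducing $W'\subset W$ from $\blym(p)\in E_W$: it relies on the explicit description of $\ratp$ in Lemma~\ref{rat-map} and, crucially, on the fact that $M\cap\PP(W''\oplus(W')^\perp)$ is a hyperplane section of $M$ when $W''\subsetneq W'$, which is exactly where $\tau$-admissibility of $\mis$ enters. One must also take care, in the barycentre argument for \eqref{eq:bbplan}, to pass from ``the barycentre of $\nu$ lies in $E_W$'' to ``$\nu$ is carried by $E_W$''; this is legitimate because $E_W$ is cut out of $\ccim$ by finitely many of the half\-spaces defining $\ccim$ (a reduction to a finite spanning subset of $\A$, using $\dim\liek<\infty$).
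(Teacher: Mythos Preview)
Your proof is correct and follows essentially the same route as the paper's: both reduce the first assertion to $\convo\subset C_W$ via Theorem~\ref{sticazzi}, and for \eqref{eq:bb} both establish the equivalence you call \eqref{eq:bbplan} by the barycentre argument (integrating the inequality $h_v\le h_v(x_0)$ for $v\in\liefwp$ and invoking $\Mom\meno(E_W)=M_W$), then deduce $W'\subset W$ from the $\mis$-a.e.\ inclusion $\ratp(x)\in M_W$. The only real difference is cosmetic: for this last deduction the paper shows that the linear span of $\check\pi_{W'}(M\setminus F)$ is all of $\PP(W')$ (using that $M\setminus F$ cannot lie in a hyperplane), whereas you argue contrapositively that if $W'\not\subset W$ the relevant set lies in the hyperplane section $M\cap\PP(W''\oplus(W')^\perp)$ --- these are the same idea viewed from opposite ends.
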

\begin{proof}
  By Theorem \ref{sticazzi} $\OO\subset \ccim$ so $\convo \subset
  \ccim$.  By Theorem \ref{estendo} $\blym(\XS) \subset \convo $ so
  $\blym(\XS) \subset \ccim$.  This proves the first part of the
  theorem.  Next we wish to show that $\blym(\XWp) \subset \ci$ if
  $W'\subset W$.  If $p\in \XWp$, then $p=gK_{W'}$ for some $g\in
  S_{W'}$ and by \eqref{eq:blymgtheta}
  \begin{gather*}
    \blym(p) = \int_{M_{W'}} \Mom(\rho(g) \cdot y) d\cmis(y).
  \end{gather*}
  Now, $\rho(g) M_{W'} \subset M_{W'} \subset M_W $, $\Mom(M_W)
  \subset \ci$ by Theorem \ref{sticazzi} and $\ci$ is obviously
  convex.  Hence $\blym(p) \in E_W$.  Conversely, fix $p\in \XWp$ for
  some \tauc subspace $W'\subset V$ and assume that $\blym(p) \in
  \ci$. We wish to prove that $W'\subset W$.  Fix a datum $\data$ and
  let $x_0$ be the line through the highest weight vector. Then by the
  definition of $E_W$ we have
  \begin{gather*}
    0= \deo \Mom(x_0), v\ode - \deo \blym(p) , v \ode = \\
    = \int_M \deo \Mom(x_0) - \Mom(\ratp(x)) , v \ode d\mis (x) =
    \int_M \Bigl ( h_v(x_0) - h_v(\ratp(x)) \Bigr ) d\mis (x).
  \end{gather*}
  Let $v_1, \lds, v_r$ be a basis of $\liet_I$ such that $v_j \in
  \liefwp$ for any $j$.  By Theorem \ref{sticazzi} $h_{v_j} \leq
  h_{v_j}(x_0)$, so $h_{v_j}(\ratp(x) ) = h_{v_j}(x_0)$
  $\mis$-a.e. for any $j$.  For any $j$ there is a subset $E_j\subset
  M$ such that $h_{v_j} \circ\ratp = h_{v_j}(x_0)$ on $ M\setminus
  E_j$ and $\mis(E_j)=0$.  Set
  \begin{gather*}
    E:= \cup_{j=1}^n E_j \cup \PP(W'^\perp).
  \end{gather*}
  Then $\mis(E)=0$. By linearity $h_v\circ \ratp = h_v(x_0)$ on $
  M\setminus E$ for any $v\in \liet_I$. By Theorem \ref{sticazzi} this
  means that $\ratp\left( M\setminus E \right) \subset M_W\subset
  \PP(W)$.  If $p = i_{W'}(gK_W)$, then $\ratp = \rho(g) \circ
  \check{\pi}_{W'} = \check{\pi}_{W'} \circ \rho(g) $. Set $F =
  \rho(g)(E)$.  Then $\check{\pi}_{W'} (M-F) = \ratp(M-E) \subset
  \PP(W)$.  We claim that the projective subspace generated by $
  \check{\pi}_{W'} (M-F) $ coincides with $\PP(W')$.  Of course $
  \check{\pi}_{W'} (M-F) \subset \PP(W')$, so the subspace generated
  is contained in $\PP(W')$.  On the other hand if $U\subset W'$ is a
  subspace such that $ \check{\pi}_{W'}(M\setminus F) \subset \PP(U)$,
  then
  \begin{gather*}
    M \setminus F \subset \PP(U + (W')^\perp) \qquad
    \rho(g)\meno F = E \\
    M - E = \rho(g)\meno ( M- F)\subset \PP((\rho(g))\meno ( U +
    (W')^\perp)).
  \end{gather*}
  Since $\mis(M\setminus E)=1$ we get
  \begin{gather*}
    \rho(g)\meno ( U + (W')^\perp)=V\qquad U + (W')^\perp =V.
  \end{gather*}
  Since $U\subset W'$ this yields $ U=W'$. The claim is proved.  Now
  from $\check{\pi}_{W'} (M-F) \subset \PP(W)$ we get $\PP(W') \subset
  \PP(W)$ so $W'\subset W$.  This proves the first equality in
  \eqref{eq:bb}.  The second one follows from Satake analysis of the
  boundary components.
\end{proof}

It is useful to recall a few definitions and results regarding convex
bodies (see e.g. \cite{schneider-convex-bodies}). If $V$ is a real
vector space and $E\subset V$ the \emph{relative interior} of $E$,
denoted $\relint E$ is the interior of $E$ in its affine hull. If $E$
is a convex set in $V$ a \emph{face} of $E$ is a convex subset
$F\subset E$ with the following property: if $x,y\in E$ and
$\relint[x,y]\cap F\neq \vacuo$ then $[x,y]\subset F$.  The
\emph{extreme points} of $E$ are the points $x\in E$ such that $\{x\}$
is a face.  If $E$ is compact the faces are closed
\cite[p. 62]{schneider-convex-bodies}.  If $F$ is a face of $E$ we say
that $\relint F$ is an \emph{open face} of $E$.
\begin{lemma}
  \label{facciotta}
  Let $V$ be a finite-dimensional real vector space and let $E\subset
  V$ be a compact convex set. Let $\{\la_\alf\} _{\alf\in A} $ be a
  family of linear functionals on $V$ and let $a_\alf \in \R $ be such
  that $E\subset \{v\in V: \la_\alf (v) \leq a_\alf $ for any $\alf\in
  A \}$. Then
  \begin{gather*}
    F:=\{v\in E: \la_\alf(v) = a_\alf \text { for any } \alf \in A\}
  \end{gather*}
  is empty or a face of $E$.
\end{lemma}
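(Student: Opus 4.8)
The plan is to show that $F$ is a face in the sense defined just above the lemma, namely that if $x, y \in E$ and $\relint[x,y] \cap F \neq \vacuo$, then $[x,y] \subset F$. Assume $F \neq \vacuo$, otherwise there is nothing to prove. Since the family of constraints $\la_\alf \leq a_\alf$ is satisfied by every point of $E$ and $F$ is their common equality set, $F$ is obviously convex (an intersection of $E$ with affine hyperplanes), so the only point to check is the face condition.

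First I would pick $x, y \in E$ and a point $z \in \relint[x,y] \cap F$, so that $z = (1-t)x + ty$ for some $t \in (0,1)$. Fix an index $\alf \in A$. Since $x, y \in E$ we have $\la_\alf(x) \leq a_\alf$ and $\la_\alf(y) \leq a_\alf$. By linearity of $\la_\alf$,
\begin{gather*}
  a_\alf = \la_\alf(z) = (1-t)\la_\alf(x) + t\la_\alf(y) \leq (1-t)a_\alf + t a_\alf = a_\alf,
\end{gather*}
so equality holds throughout. Because $1-t > 0$ and $t > 0$, the equality $(1-t)\la_\alf(x) + t\la_\alf(y) = (1-t)a_\alf + ta_\alf$ together with the two inequalities $\la_\alf(x) \leq a_\alf$, $\la_\alf(y) \leq a_\alf$ forces $\la_\alf(x) = a_\alf$ and $\la_\alf(y) = a_\alf$: if one of them were strictly less, the convex combination would be strictly less than $a_\alf$. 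Hence both $x$ and $y$ lie in $F$.

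Finally, since $F$ is convex and $x, y \in F$, the whole segment $[x,y]$ is contained in $F$; as $\alf$ was arbitrary this holds for all constraints simultaneously, which is exactly the face property. This completes the proof. There is no real obstacle here: the statement is a standard and elementary fact about exposed-type faces of convex bodies, and the only thing to be slightly careful about is that $t$ is a genuine interior parameter ($t \in (0,1)$), which is guaranteed by the hypothesis $z \in \relint[x,y]$ rather than merely $z \in [x,y]$.
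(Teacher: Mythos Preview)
Your proof is correct and follows the same idea as the paper's: an affine function that attains its maximum over $E$ at an interior point of a segment must be constant on that segment. The paper states this in one sentence, while you unpack the convex-combination computation explicitly; the content is identical.
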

\begin{proof}
  Fix $\alf \in A$ and $x, y \in E $. If $\relint [x,y]\cap F\neq
  \vacuo$, the affine function $\la_\alf$ takes its maximum at an
  interior point of $[x,y] $. Therefore it must be constant on
  $[x,y]$.
\end{proof}

\begin{teo}
  [\protect{\cite[p. 62]{schneider-convex-bodies}}] If $E$ is a
  compact convex set and $F_1,F_2$ are distinct faces of $E$ then
  $\relint F_1 \cap \relint F_2=\vacuo$. If $G$ is a nonempty convex
  subset of $ E$ which is open in its affine hull, then $G
  \subset\relint F$ for some face $F$ of $E$. Therefore $E$ is the
  disjoint union of its open faces.
\end{teo}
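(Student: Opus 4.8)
The plan is to reduce all three assertions to one lemma: for every $z\in E$ there is a smallest face $F_z$ of $E$ containing $z$, and moreover $z\in\relint F_z$. As preparation I would record three routine consequences of the definition of a face, each proved directly: an arbitrary intersection of faces of $E$ is a face of $E$; a face of a face of $E$ is a face of $E$; and if a hyperplane $H$ supports a convex set $C$, then $C\cap H$ is a face of $C$. The first fact shows that $F_z:=\bigcap\{F : F\text{ is a face of }E,\ z\in F\}$ is a well-defined face of $E$ (the family is nonempty because $E$ is a face of itself), and it is by construction the smallest face containing $z$.

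To prove $z\in\relint F_z$ I would argue by contradiction. If $z$ lay in the relative boundary of the convex set $F_z$, then, working inside the affine hull of $F_z$, the supporting hyperplane theorem produces a hyperplane $H$ through $z$ that supports $F_z$ with $F_z\not\subset H$. Then $F_z\cap H$ is a face of $F_z$, hence a face of $E$; it contains $z$ and is strictly contained in $F_z$, contradicting the minimality of $F_z$. This is the step I expect to be the main obstacle: it rests on the separation theorem applied at a relative boundary point and on the strict drop of dimension, whereas everything else is bookkeeping with the definition of a face.

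Granting the lemma, I would first show that whenever $F$ is a face with $z\in\relint F$ one has $F=F_z$. Here $F_z\subset F$ by minimality, and conversely for $x\in F$ with $x\ne z$ the point $y:=z+t(z-x)$ lies in the affine hull of $F$ and tends to $z$ as $t\to0^+$, so $y\in F\subset E$ for small $t>0$; then $z\in\relint[x,y]$ and $z\in F_z$, so $[x,y]\subset F_z$ and $x\in F_z$. Consequently, if $z\in\relint F_1\cap\relint F_2$ for faces $F_1,F_2$ then $F_1=F_z=F_2$, so distinct faces have disjoint relative interiors; this is the first assertion.

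For the second assertion, fix $z\in G$ and set $F:=F_z$. For an arbitrary $x\in G$, the openness of $G$ in its affine hull lets me extend the segment $[z,x]$ slightly past $x$ to a point $y\in G\subset E$ with $x\in\relint[z,y]$; since $x\in F_x$ and $F_x$ is a face, $[z,y]\subset F_x$, hence $z\in F_x$ and $F_z\subset F_x$ by minimality. Extending $[x,z]$ past $z$ symmetrically yields $x\in F_z$ and $F_x\subset F_z$, so $F_x=F_z=F$, and therefore $x\in\relint F_x=\relint F$ by the lemma; thus $G\subset\relint F$. Finally, the lemma gives $E=\bigcup_F\relint F$ with the union over all faces $F$ of $E$, and the first assertion makes this union disjoint, which is the third assertion.
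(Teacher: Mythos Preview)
Your proof is correct. Note, however, that the paper does not supply its own proof of this theorem: it is quoted from Schneider's book (\cite[p.~62]{schneider-convex-bodies}) and used as a black box, so there is nothing in the paper to compare your argument against. Your approach---defining the smallest face $F_z$ containing a given point and showing $z\in\relint F_z$ via a supporting-hyperplane argument in the affine hull---is in fact the standard one found in that reference. One small point worth making explicit: the supporting-hyperplane step uses that $F_z$ is closed, which holds because $E$ is compact (the paper records this just before the statement).
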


\begin{lemma}\label{relinquo}
  Let $V$ be a finite-dimensional real vector space, $E\subset V$ a
  compact convex set and $A\subset E$ a convex set. Assume that $A$ is
  dense in $E$ and is open in its affine hull. Then $A=\relint E$.
\end{lemma}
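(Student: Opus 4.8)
The plan is to reduce to the case where the ambient space is the affine hull of $E$ (so that $\relint$ becomes ordinary interior) and then to establish the two inclusions $A\subseteq\relint E$ and $\relint E\subseteq A$ separately. If $E=\vacuo$ then $A=\vacuo$ and there is nothing to prove, so from now on $E\ne\vacuo$, and hence $A\ne\vacuo$ since $A$ is dense in $E$. First I would check that $\operatorname{aff} A=\operatorname{aff} E$: the inclusion $\operatorname{aff} A\subseteq\operatorname{aff} E$ is clear from $A\subseteq E$, and conversely $\operatorname{aff} A$ is a closed affine subspace (affine subspaces of a finite-dimensional space are closed) containing $A$, hence containing $\overline A$; since $E$ is closed and $A\subseteq E$ is dense in $E$ we have $\overline A=E$, so $E\subseteq\operatorname{aff} A$ and thus $\operatorname{aff} E\subseteq\operatorname{aff} A$. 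Replacing $V$ by $\operatorname{aff} E$ (translated to pass through the origin) we may assume $V=\operatorname{aff} E=\operatorname{aff} A$; if $\dim V=0$ the claim is trivial, so assume $\dim V\ge1$. Then $\relint E=\inte E$, and the hypothesis that $A$ is open in its affine hull just says that $A$ is open in $V$.

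With this reduction one inclusion is immediate: $A$ is an open subset of $V$ contained in $E$, so $A\subseteq\inte E=\relint E$.

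For the reverse inclusion I would argue by contradiction: suppose $x\in\inte E$ but $x\notin A$. Since $A$ is a nonempty open convex subset of $V$ not containing $x$, the Hahn--Banach separation theorem yields a nonzero linear functional $\ell$ on $V$ with $\ell(a)<\ell(x)$ for every $a\in A$. By density of $A$ in $E$ this passes to the closure, giving $\ell(e)\le\ell(x)$ for all $e\in E$; in other words $\ell$ attains its maximum over $E$ at the interior point $x$. Choosing a ball $B\subseteq\inte E$ centred at $x$, we get $\ell(x+v)\le\ell(x)$, i.e. $\ell(v)\le0$, for all $v$ in a neighbourhood of $0$, and scaling gives $\ell\equiv0$, contradicting $\ell\ne0$. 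Hence $\inte E\subseteq A$, and together with the previous step $A=\inte E=\relint E$.

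I do not expect a genuine obstacle, since the whole argument is elementary convex geometry; the only points needing a little care are the passage to the affine hull (so that relative interior becomes interior) and the degenerate cases $E=\vacuo$ or $E$ a single point, which are disposed of at the outset.
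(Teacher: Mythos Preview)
Your proof is correct. The paper itself states Lemma~\ref{relinquo} without proof, presumably regarding it as a standard fact of convex geometry, so there is no ``paper's own proof'' to compare against.

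One minor remark: the appeal to Hahn--Banach for the inclusion $\inte E\subseteq A$ is a bit heavier than necessary. After your reduction (so that $A$ is open in $V$ and $\overline A=E$), pick any $a_0\in A$ and let $x\in\inte E$. Since $x$ is interior to $E$, the point $y:=x+s(x-a_0)$ lies in $E=\overline A$ for some small $s>0$. Then $x=\tfrac{s}{1+s}\,a_0+\tfrac{1}{1+s}\,y$ is a proper convex combination of a point of $\relint A=A$ and a point of $\overline A$, hence lies in $\relint A=A$ by the standard line-segment principle for relative interiors. This avoids the separation argument, but your approach is perfectly valid.
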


\begin{lemma}
  \label{spacco-la-faccia}
  \begin{enumerate}
  \item[(a)] For any \tauc subspace $W$ the set $ F_W:=\convo\cap \ci
    $ is a face of $\convo$.
  \item[(b)] If $W$ and $W'$ are $\tau$-connected subspaces of $V$ and
    $F_W\subset F_{W'} $, then $W\subset W' $.
  \item[(c)] $\relint F_W = \relint F_{W'}$ if and only if $W=W'$.
  \end{enumerate}
\end{lemma}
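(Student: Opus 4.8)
The plan is to derive all three parts from Theorem \ref{sticazzi}, Lemma \ref{rupicapra} and Corollary \ref{MW-determina}, using the convex-geometry facts recalled just above (Lemma \ref{facciotta} and \cite[p.~62]{schneider-convex-bodies}). The common mechanism is that the affine inequalities defining $C_W$ hold on all of $\convo$, so the locus $F_W$ where they become equalities is a face of $\convo$, while Corollary \ref{MW-determina} translates inclusions among the submanifolds $M_W$ into inclusions among the subspaces $W$.

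For part (a) I would first observe that $\convo$ is a compact convex body (Lemma \ref{intec-nonvuoto}) and that, by Theorem \ref{sticazzi} together with convexity of $C_W$, $\convo = \conv(\OO) \subset C_W$. By Lemma \ref{rupicapra}, fixing any $x \in M_W$, the set $C_W$ consists of the $\la \in \liek^*$ with $\la(v) \le \deo \Mom(x), v \ode$ for all $v \in \A$, and $\ci$ is the subset where all these inequalities are equalities; hence $F_W = \convo \cap \ci$ is exactly the locus in $\convo$ where the evaluation functionals $\la \mapsto \la(v)$, $v \in \A$, attain the values $\deo \Mom(x), v \ode$. This locus is nonempty: since $\Mom\meno(\ci) = M_W$ (Theorem \ref{sticazzi}) and $M_W \neq \vacuo$ (Proposition \ref{W-I} \ref{W-I-orbite}), we have $\vacuo \neq \Mom(M_W) = \OO \cap \ci \subset F_W$. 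Lemma \ref{facciotta}, applied with $E = \convo$, then shows that $F_W$ is a face of $\convo$. The case $W = V$ is trivial, since then $\ci = \liek^*$ and $F_V = \convo$, a face of itself.

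For part (b), assume $F_W \subset F_{W'}$. From Theorem \ref{sticazzi} one has $\Mom(M_W) = \OO \cap \ci$ and $\Mom(M_{W'}) = \OO \cap \cip$, so
\begin{gather*}
  \Mom(M_W) = \OO \cap \ci \subset F_W \subset F_{W'} \subset \cip ,
\end{gather*}
and therefore $\Mom(M_W) \subset \OO \cap \cip = \Mom(M_{W'})$. As $\Mom$ is a bijection of $M$ onto $\OO$, this gives $M_W \subset M_{W'}$, hence $W \subset W'$ by Corollary \ref{MW-determina}. (With the conventions $\ci = \liek^*$ and $M_V = M$ the same computation handles the cases $W = V$ or $W' = V$, where the conclusion is in any case obvious.)

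Finally, for part (c) one implication is trivial, so suppose $\relint F_W = \relint F_{W'}$. By part (a) these are the relative interiors of faces of the compact convex set $\convo$, and both $F_W$ and $F_{W'}$ are nonempty (they contain $\Mom(M_W)$, resp.\ $\Mom(M_{W'})$); since a nonempty convex subset of a finite-dimensional vector space has nonempty relative interior, $\relint F_W = \relint F_{W'}$ is nonempty. By \cite[p.~62]{schneider-convex-bodies}, distinct faces of $\convo$ have disjoint relative interiors, so the two faces, sharing a relative-interior point, must coincide; applying part (b) in both directions then yields $W = W'$. I do not anticipate a real obstacle: the only points needing attention are the nonemptiness of $M_W$ and $F_W$, required in order to invoke Lemma \ref{facciotta} and the open-face theorem, and keeping track of the degenerate case $W = V$.
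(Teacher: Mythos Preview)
Your proof is correct and follows essentially the same route as the paper: Theorem \ref{sticazzi} plus Lemma \ref{rupicapra} feed into Lemma \ref{facciotta} for (a), then Theorem \ref{sticazzi} and Corollary \ref{MW-determina} give (b), and (c) follows from (b) once $F_W=F_{W'}$. The only cosmetic difference is in (c): the paper simply takes closures of the equal relative interiors (faces of a compact convex set are closed), whereas you invoke the disjointness of relative interiors of distinct faces; both arguments are standard and equivalent, and your extra checks of nonemptiness and of the degenerate case $W=V$ are harmless additions.
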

\begin{proof}
  (a) It follows from Theorem \ref{sticazzi} that $\convo \subset
  C_W$. By Lemma \ref{rupicapra} $E_W$ is the affine subspace where
  the inequalities defining $C_W$ become equalities.  Hence Lemma
  \ref{facciotta} ensures that $F_W$ is a face of $\convo$.  (b) If $
  F_W\subset F_{W'}$ then $\ci\cap\OO \subset \cip \cap\OO$. Using
  again Theorem \ref{sticazzi} we get that
  \begin{gather*}
    M_W= \Mom\meno(\ci) \subset \Mom\meno(\cip) = M_{W'}.
  \end{gather*}
  By Corollary \ref{MW-determina} this implies that $W\subset W'$.  (c) If
  $\relint F_W = \relint F_{W'}$ also $F_W=F_{W'}$ by taking the
  closures.  Hence $W=W'$ by (b).
\end{proof}

\begin{teo}
  \label{borda}
  Let $\mis$ be a $\tau$-admissible measure.  Then $\blym (\partial
  \XS) \subset \partial \convo$.
\end{teo}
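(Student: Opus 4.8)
The plan is to reduce the statement to the individual boundary components and then read off the conclusion from the convex geometry already set up. By Theorem~\ref{ziasti} one has $\partial\XS=\bigsqcup_W\XW$, the union being taken over all proper $\tau$-connected subspaces $W\subsetneq V$, so it suffices to show that $\blym(\XW)\subset\partial\convo$ for each such $W$.

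First I would fix a proper $\tau$-connected subspace $W$ and locate $\blym(\XW)$. Theorem~\ref{estendo} gives $\blym(\XS)\subset\convo$, while Proposition~\ref{bordo-bordo} yields $\blym\meno(E_W)=\overline{\XW}$, so in particular $\blym(\XW)\subset E_W$. Combining the two inclusions,
\begin{gather*}
  \blym(\XW)\subset\convo\cap E_W=\FW ,
\end{gather*}
and by Lemma~\ref{spacco-la-faccia}(a) the set $\FW=\convo\cap\ci$ is a face of $\convo$. So everything comes down to showing that this face is a \emph{proper} one.

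Next I would argue that $\FW\neq\convo$. Suppose the contrary. Then $\OO=\Mom(M)\subset\FW\subset E_W$, so Theorem~\ref{sticazzi} gives $M=\Mom\meno(\OO)\subset\Mom\meno(E_W)=M_W$; since $M_W\subset M$ always holds, this forces $M_W=M$, i.e.\ $M\subset\PP(W)$. But $M$ is nondegenerate in $\PP(V)$: the linear span of the affine cone over $M=G\cdot x_0$ is a nonzero $G$-invariant subspace of $V$, hence equals $V$ because $\tau$ is irreducible (equivalently, apply Corollary~\ref{MW-determina}). Thus $W=V$, contradicting $W\subsetneq V$, and therefore $\FW$ is a proper face.

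Finally, a proper face of a compact convex body with nonempty interior is contained in its boundary: if some $x\in\FW$ lay in $\intec$, which is nonempty by Lemma~\ref{intec-nonvuoto}, then for every $y\in\convo$ a sufficiently small push $z=x+\eps(x-y)$ would still lie in $\convo$, so that $x\in\relint[y,z]\cap\FW$ would force $[y,z]\subset\FW$ and hence $y\in\FW$; this would give $\FW=\convo$, a contradiction. Therefore $\FW\subset\convo\setminus\intec=\partial\convo$, whence $\blym(\XW)\subset\FW\subset\partial\convo$ for every proper $\tau$-connected $W$, i.e.\ $\blym(\partial\XS)\subset\partial\convo$. The only genuinely substantive point is the properness of $\FW$, which is exactly where the nondegeneracy of $M\subset\PP(V)$ enters; everything else is bookkeeping with the results of \S\ref{tau-moment-section} and \S\ref{section-BLY-general}.
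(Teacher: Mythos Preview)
Your proof is correct and follows essentially the same line as the paper's: decompose $\partial\XS$ into the pieces $\XW$, use Proposition~\ref{bordo-bordo} together with Theorem~\ref{estendo} to land each piece in $F_W=\convo\cap E_W$, invoke Lemma~\ref{spacco-la-faccia} to see that $F_W$ is a face, and conclude $F_W\subset\partial\convo$. The only minor difference is in how properness of $F_W$ is justified: the paper simply cites Lemma~\ref{spacco-la-faccia} (implicitly relying on part~(b), since $F_V=\convo$ would force $V\subset W$), whereas you argue directly from Theorem~\ref{sticazzi} and the nondegeneracy of $M\subset\PP(V)$; both routes are valid and essentially equivalent.
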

\begin{proof}
  By Theorems \ref{Satakone} and \ref{ziasti} the boundary of $ \XS$
  is the disjoint union of $ \XW$ as $W$ varies over all proper \tauc
  subspaces. By Proposition \ref{bordo-bordo} for any such $W$, $\blym
  (\XW) \subset E_W\cap \convo=F_W$ and by Lemma
  \ref{spacco-la-faccia} $F_W$ is a proper face of $\convo$, so
  $F_W\subset \partial \convo$.
\end{proof}

\subsection{The case of the $K$-invariant measure}
\label{section-K-inv}

Let $G, K, \tau. \scalo$ be as at p. \pageref{data}, and let $M$ be
the associated flag manifold.  Let $\mu$ denote the unique
$K$-invariant probability measure on $M$, i.e.  the Riemannian volume
normalized to be of unit mass. Set
\begin{gather*}
  \bly:=\bly_\mu.
\end{gather*}

\begin{teo}
\label{infiammata}
  $ \bly\restr{ X} $ is a diffeomorphism of $X$ onto the interior of
  $\convo$.
\end{teo}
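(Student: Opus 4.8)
The plan is to show that $\bly=\bly_\mu$ restricted to $X$ is (i) a local diffeomorphism, (ii) injective, and (iii) has image exactly $\intec=\inte\convo$; properness of the map $X\to\intec$ (or rather the fact that a proper local homeomorphism onto a connected manifold is a covering, combined with simple connectedness of $\intec$) then upgrades this to a global diffeomorphism. The symmetry of the situation under the Cartan involution and the $K$-action will be used repeatedly to reduce computations to the base point $eK$.

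First I would establish that $\bly$ is $K$-equivariant: since $\mu$ is $K$-invariant and $\Mom$ is equivariant, for $a\in K$ one has $\bly(agK)=\Ad^*(a)\bly(gK)$, using that $\rho(ag)=a\rho(g)a\meno$ (as $\theta(a)=a$) and the change-of-variables formula \eqref{eq:pushforward} with the $\mu$-preserving diffeomorphism $x\mapsto a\meno x$. Consequently, to prove that $d\bly$ is everywhere injective it suffices to check this at a single point of each $K$-orbit; writing $g=\exp(i\xi)\cdot a$ with $\xi\in\liek$, $a\in K$ via the polar decomposition, and using $K$-equivariance, it is enough to compute $d\bly$ at points of the form $\exp(i\xi)K$, and in fact by a further reduction (conjugating $\xi$ into $\liet$) at points $\exp(i\xi)K$ with $\xi\in\liet$. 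The key computation is then: for $\xi\in\liet$ and $v\in\liek$,
\begin{gather*}
  \frac{d}{dt}\Big|_{t=0}\bly\bigl(\exp(i\xi)\exp(tv)K\bigr)
  = \int_M \frac{d}{dt}\Big|_{t=0}\Mom\bigl(\rho(\exp(i\xi)\exp(tv))\cdot x\bigr)\,d\mu(x),
\end{gather*}
and one must show this vanishes for all $v$ only if... actually one shows the differential is an isomorphism of $\liek$ (identified with $T_{\exp(i\xi)K}X$ via fundamental vector fields and with $\liek^*$ via $-B$). The natural way is to exhibit $d\bly$ at such a point as a symmetric, negative-definite (or positive-definite) bilinear form: differentiating $h_v(\rho(g)\cdot x)=\deo\Mom(\rho(g)x),v\ode$ and integrating against $\mu$, one gets an expression governed by the Hessian-type quantities appearing in the lemma preceding Theorem \ref{sticazzi} (the formula $D^2h_v(x_0)(w,w)=iB([u,\htau],[u,v])$ and its relatives), averaged over $M$. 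I expect this averaged Hessian to be the gradient of a convex function on $\lia=i\liet$ — concretely, $\bly(\exp(i\xi)K)$ should be, up to identification, the differential at $\xi$ of the convex function $F(\xi)=\int_M \log\|\tau(\exp(i\xi/2))v\|^2\,d\mu([v])$ or a close variant, whose Hessian is the "variance" of the moment-map components and hence positive-definite precisely because $\mu$ is $\tau$-admissible (so no component $h_v$ is $\mu$-a.e.\ constant unless $v=0$). Establishing this positive-definiteness is the main obstacle: it requires identifying the right convex potential, computing its Hessian as a genuine variance of functions on $(M,\mu)$, and invoking $\tau$-admissibility of $\mu$ (Lemma \ref{convergenza-qo} context) to see the variance is nondegenerate.

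Once $d\bly$ is everywhere an isomorphism, $\bly\restr X$ is a local diffeomorphism, and by Lemma \ref{lemma-convesso} its image lies in $\convo$; since $X$ is open and $\bly$ is open, the image lies in $\intec$ (a boundary point would contradict openness, given Theorem \ref{estendo} sends $\partial\XS$ into $\partial\convo$ via Theorem \ref{borda}, so the image of the open part cannot touch $\partial\convo$... more directly, openness of $\bly\restr X$ forces the image into $\inte\convo=\intec$). For injectivity and surjectivity onto $\intec$: I would use the convex-potential picture again. Restricting to the flat $\exp(i\liet)K$, the map is the gradient of a strictly convex function $F$ on $i\liet$; its image is an open convex set, and one identifies it with the interior of the relevant polytope (a section of $\convo$). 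Combined with $K$-equivariance and the fact that every point of $\convo$ is $K$-conjugate to a point of the Weyl-chamber slice of $\OO$'s convex hull, this gives that $\bly(X)\supset\intec$. For global injectivity, note $\bly$ is the "moment map" of an appropriate action or, more elementarily, the gradient of a strictly convex exhaustion-type function on $X$ (a $K$-invariant strictly convex function on the symmetric space pulled back along $\bly$), and strictly convex functions have injective gradients; alternatively, a proper local diffeomorphism onto the simply connected manifold $\intec$ is a diffeomorphism. Properness follows because as $g$ leaves every compact set of $X$, $\rho(g)$ degenerates and $\bly(gK)$ approaches $\partial\convo$ (this is essentially the content of Theorem \ref{borda} together with continuity of the extension from Theorem \ref{estendo}: the extended $\blym$ maps the compact $\XS$ onto a compact set containing $\intec$ and sends $\partial\XS$ into $\partial\convo$, so preimages of compact subsets of $\intec$ are compact in $X$). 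Assembling these three ingredients — local diffeomorphism, properness, and simple connectedness of $\intec$ — yields that $\bly\restr X\colon X\to\intec$ is a diffeomorphism, which is the assertion of Theorem \ref{infiammata}.
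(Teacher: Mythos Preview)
Your overall architecture is exactly that of the paper: show $\bly\restr X$ is a local diffeomorphism, use Lemma \ref{lemma-convesso} and openness to land in $\intec$, use Theorem \ref{borda} (together with the continuous extension of Theorem \ref{estendo}) to get properness of $\bly\restr X:X\to\intec$, and conclude by the covering argument over the simply connected set $\intec$. That part is fine.

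The gap is in your local-diffeomorphism step. Your convex-potential idea, with $F(\xi)=\int_M\log\|\tau(\exp(i\xi/2))v\|^2\,d\mu$ on $i\liet$, controls only the derivative of $\bly$ in the \emph{flat} directions $i\liet$; it says nothing about the directions transverse to the flat inside $T_{\exp(i\xi)K}X\cong i\liek$. You write ``for $\xi\in\liet$ and $v\in\liek$'', so you are aware the full tangent space must be handled, but the potential $F$ lives on $i\liet$ and its Hessian is a form on $\liet$, not on $\liek$. The appeal to the Hessian formula $D^2h_v(x_0)(w,w)=iB([u,\htau],[u,v])$ preceding Theorem \ref{sticazzi} is also misplaced: that is a second derivative of $h_v$ in the $M$-variable at a single critical point $x_0$, whereas what is needed here is a \emph{first} derivative in the $G$-variable, integrated over $M$.

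The paper bypasses all of this with a one-line computation valid at every $g\in G$, with no reduction to the torus and no potential. Set $F(g)=\int_M\Mom(g\cdot x)\,d\mu(x)$ and, for $v\in\liek$, let $w=(dR_g)_e(iv)$. Then, using $\xi_{iv}=J\xi_v$ (holomorphic action of $K^\C$) and $dh_v=-i_{\xi_v}\om$,
\[
\bigl\langle dF(g)(w),\,v\bigr\rangle
=\int_M\frac{d}{dt}\Big|_{t=0}h_v\bigl(\exp(itv)\cdot y\bigr)\,d(g_\#\mu)(y)
=-\int_M|\xi_v|^2\,d(g_\#\mu).
\]
Since $g_\#\mu$ has smooth strictly positive density, this vanishes only when $\xi_v\equiv 0$, i.e.\ $v=0$. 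Hence $dF(g)$ is injective on $(dR_g)_e(i\liek)$, so $F$ is a submersion; and since $F=\bly\circ\pi$ (using $a_\#\mu=\mu$ for $a\in K$), $\bly\restr X$ is a local diffeomorphism. This replaces your entire ``main obstacle'' with a single negative-definite pairing.
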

\begin{proof}
Consider the map
\begin{gather*}
  F: G\ra \liek^* \qquad F(g) :=\int_M \Mom(g\cdot x)d\mu(x).
\end{gather*}
Fix $g\in G$, $v\in \liek$ and set $\tmu = g\pf \mu$.  Denote by $R_g
: G \ra G$ the right translation and set $w:=(dR_g)(e)(iv)\in T_g G$.
\begin{gather*}
  dF(g)\left ( w \right ) = \desudtzero F( \exp(itv) g) = \desudtzero
  \int_M \Mom(\exp(itv \cdot y ) d\tmu(y)
  \\
  \deo dF(g)\left (w \right ) , v \ode = \int_M
  \desudtzero \deo \Mom(\exp(itv \cdot y ) , v \ode d\tmu(y)\\
  \desudtzero \deo \Mom(\exp(itv \cdot y ) , v \ode = d \deo \Mom, v
  \ode(y) ( \xi_{iv}(y))= - i_{\xi_v} \om (J\xi_v(y)) = - |\xi_v(y)|^2\\
  \deo dF(g)\left ( w \right ) , v \ode = - \int_M |\xi_v|^2 d\tmu.
\end{gather*}
Since $\tmu$ is a smooth measure with strictly positive density, if
$dF(g)(w)=0$, then $\xi_v \equiv 0$ and $v=0$. This shows that the
restriction of $dF(g)$ to $( dR_g)(e)(i\liek) \subset T_gG$ is an
isomorphism. In particular $dF(g)$ is onto and $F$ is a submersion.
Let $g=\rho(g) a$ be the polar decomposition, i.e.  $a\in K$.  Since
$a\pf \mu=\mu$
\begin{gather*}
  F(g) :=\int_M \Mom(g \cdot x)d\mu(x)= \int_M \Mom(\rho(g)\cdot
  x)d(a\pf\mu)(x)= \int_M \Mom(\rho(g)\cdot x)d\mu(x)= \\
  =\bly( gK).
\end{gather*}
So if $\pi: G \ra X$ is the canonical map $\pi(g) = gK$, then
$\bly\restr{X} \circ \pi = F$.  Since $F$ is a submersion,
$\bly\restr{X}$ is a local diffeomorphism and in particular an open
map.  So $\bly(X)$ is open in $\liek^*$.  Since $\bly(X) \subset
\convo$ by Lemma \ref{lemma-convesso}, it follows that $\bly(X)
\subset \inte \convo = \Omega$.  On the other hand, by Theorem
\ref{borda}, $\bly(\partial \XS) \subset \partial \convo$.  Therefore
$\bly\restr{X} : X \ra \Omega$ is also a proper map. It follows that
it is a covering of $\Omega$ and therefore a diffeomorphism.
\end{proof}

\begin{teo}
  \label{omeo-K}
    \begin{enumerate}
    \item [(a)] The Bourguignon-Li-Yau map $\bly$ is a homeomorphism
      of $\XS$ onto $\convo$.
  \item [(b)] For any \tauc subspace $W$, $\bly(\XW) =\relint \FW$ and
    $ \bly \circ i_W $ is a diffeomorphism of $X_W$ onto $\relint
    F_W$.
  \end{enumerate}
\end{teo}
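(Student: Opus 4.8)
The plan is to establish (b) first, one $\tau$-connected subspace at a time, and then to assemble the pieces into (a). Fix a $\tau$-connected $W\subset V$. The data $S_W,K_W,\tau_W,\scalo$ are again of the type considered at p.~\pageref{data} (recall $S_W$ is connected complex semisimple, $K_W=S_W\cap K$ is a maximal compact subgroup, and $\tau_W$ is irreducible by Proposition~\ref{W-I}~\ref{W-I-irreducible} and nontrivial on each simple factor of $S_W$ by the very choice of $S_W$), with associated flag manifold $M_W$ and moment map $\Momw:M_W\ra\liek_W^*$. Write $\mu_W$ for the $K_W$-invariant probability measure on $M_W$, set $\OO_W:=\Momw(M_W)$ and let $\convo_W$ be its convex hull. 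Applying Theorem~\ref{infiammata} to $M_W$, the map $\bly^W_{\mu_W}\restr{X_W}$ is a diffeomorphism of $X_W$ onto $\inte\convo_W$. On the other hand Lemma~\ref{lemma-traslo}, applied with $\mis=\mu$, identifies $\cmis=(\Pii)\pf\mu$ with $\mu_W$ and gives $\bly\comp i_W=\thw\comp\bigl(\bly^W_{\mu_W}\restr{X_W}\bigr)$. Since $\thw$ is an injective affine map, it restricts to a diffeomorphism of the affine hull of $\convo_W$ onto an affine subspace; hence $\bly\comp i_W$ is a diffeomorphism of $X_W$ onto $\thw(\inte\convo_W)$.

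The next step is to show $\thw(\inte\convo_W)=\relint\FW$. As $\thw$ is affine and injective, $\thw(\convo_W)=\conv\bigl(\thw(\OO_W)\bigr)$; by Lemma~\ref{momento-ristretto}~\ref{momento-ristretto-b} one has $\thw(\OO_W)=\Mom(M_W)$, and by Theorem~\ref{sticazzi} $\Mom(M_W)=\Mom(\Mom\meno(E_W))=\OO\cap E_W$, so $\thw(\convo_W)=\conv(\OO\cap E_W)$. Now $\FW=\convo\cap E_W$ is a face of $\convo=\conv\OO$ (Lemma~\ref{spacco-la-faccia}~(a)), hence compact; since $\OO$ is compact the extreme points of $\convo$ lie in $\OO$, so the extreme points of $\FW$, being extreme points of $\convo$ that lie in $E_W$, belong to $\OO\cap E_W$, and Krein--Milman yields $\FW=\conv(\OO\cap E_W)=\thw(\convo_W)$. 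Because $\convo_W$ is full-dimensional in $\liek_W^*$ (Lemma~\ref{intec-nonvuoto}~(a) applied to $M_W$, $K_W$ being semisimple) and an injective affine map carries relative interiors to relative interiors, $\thw(\inte\convo_W)=\thw(\relint\convo_W)=\relint\thw(\convo_W)=\relint\FW$. This proves (b); for $W=V$ it reduces to Theorem~\ref{infiammata}, since then $F_V=\convo$ and $\relint F_V=\intec$.

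Part (a) then follows formally. By Theorem~\ref{estendo} the extension $\bly:\XS\ra\liek^*$ is continuous with $\bly(\XS)\subset\convo$. It is onto: $\bly(X)=\intec$ by Theorem~\ref{infiammata}, so $\intec\subset\bly(\XS)\subset\convo$, and since $\bly(\XS)$ is compact, hence closed, $\convo=\overline{\intec}\subset\bly(\XS)$. It is injective: if $\bly(p)=\bly(p')$ then by Theorem~\ref{ziasti} $p\in\XW$ and $p'\in\XWp$ for some $\tau$-connected $W,W'$, and by (b) the common value lies in $\relint\FW\cap\relint F_{W'}$; since $\convo$ is the disjoint union of its open faces this forces $\relint\FW=\relint F_{W'}$, hence $W=W'$ by Lemma~\ref{spacco-la-faccia}~(c), and then $p=p'$ because $\bly\comp i_W$ is injective. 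A continuous bijection from a compact space onto a Hausdorff space is a homeomorphism, which gives (a).

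Most of this is bookkeeping: checking that $M_W$ satisfies the hypotheses of Theorem~\ref{infiammata} and tracking the affine map $\thw$. The one step that requires genuine care is the convex-geometric identity $\FW=\conv(\OO\cap E_W)$ --- equivalently, that the face $\FW$ is spanned by the orbit points it contains. It is exactly this that guarantees $\thw$ sends $\inte\convo_W$ \emph{onto} the relative interior of the face $\FW$, and not merely onto some smaller relatively open convex subset; once it is in hand, the stratum-by-stratum matching of $\XS$ with the open faces of $\convo$ is forced.
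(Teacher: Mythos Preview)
Your proof is correct and the overall architecture matches the paper's, but the key identification $\thw(\inte\convo_W)=\relint\FW$ is obtained differently. The paper first establishes surjectivity of $\bly$ on $\XS$, then invokes Proposition~\ref{bordo-bordo} to get $\bly(\overline{\XW})=E_W\cap\convo$; feeding this into the commutative diagram \eqref{diagrammone} (extended by continuity to closures) gives that $\thw(\intec_W)$ is a convex set dense in $F_W$ and open in its affine hull, and Lemma~\ref{relinquo} finishes. You instead bypass Proposition~\ref{bordo-bordo} and Lemma~\ref{relinquo} entirely by proving $\thw(\convo_W)=F_W$ directly: $\thw(\OO_W)=\Mom(M_W)=\OO\cap E_W$ via Lemma~\ref{momento-ristretto} and Theorem~\ref{sticazzi}, and then Krein--Milman applied to the face $F_W$ (whose extreme points are extreme in $\convo$, hence lie in $\OO$) gives $F_W=\conv(\OO\cap E_W)$. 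Your argument is slightly more self-contained and makes the convex-geometric content of the statement transparent; the paper's route has the virtue of not needing to identify the extreme points of $\convo$ but pays for this by going through the full boundary analysis of Proposition~\ref{bordo-bordo}.
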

\begin{proof}
  By Theorem \ref{estendo} the map $\bly: \XS\ra \liek^* $ is
  continuous and its image is contained in $\convo$.  By Theorem
  \ref{borda} $\bly( \partial \XS) \subset \partial \convo$.  Since
  $\XS$ is compact and $\bly(\XS) \supset \bly(X) = \intec$,
  $\bly(\XS) = \convo$, so $\bly$ is surjective.  Next consider a
  proper $\tau$-connected subspace $W$.  By Proposition
  \ref{bordo-bordo} $\bly(\XW ) \subset \convo \cap \ci$.  By Lemma
  \ref{lemma-traslo} the diagram
  \begin{gather}
    \label{diagrammone}
    \begin{diagram}
      \node[1] {X_W} \arrow[2]{e,t}{\bly^W}
      \arrow[1]{s,l}{i_W} \node[2]{\liek_W^*} \arrow{s,r}{\thw}
      \\
      \node[1] {\XW} \arrow[2]{e,t}{\bly }\node[2] {\liek^*}
    \end{diagram}
  \end{gather}
  commutes.  (Here $\bly^W$ is the Bourguignon-Li-Yau map of $M_W$
  with the invariant probability measure.)  Set $\OO_W : = \Mom^W
  (M_W) \subset \liek_W^*$, $\convo_W : = $ convex hull of $ \OO_W$
  and $\Omega_W := \inte \convo_W$.  By what we have just proved
  applied to the flag manifold $M_W$, $\bly^W(\overline{X}_W) =
  \convo_W$ and $\bly^W\restr{X_W}$ is a diffeomorphism onto
  $\intec_W$.  Since $\thw $ is an affine embedding, it follows that
  $\bly\circ i_W$ is a diffeomorphism of $X_W$ onto $\thw(\intec_W)$.
  We wish to show that
  \begin{gather}
    \label{eq:Theta-leccamelo}
    \thw(\intec_W)=\relint \left ( \ci \cap \convo \right ).
  \end{gather}
  Let $\overline{X}_W$ denote the Satake compactification of $X_W$
  associated to $\tau_W$ and let $\overline{\XW}$ denote the closure
  of $\XW$ in $\XS$.  If in \eqref{diagrammone} we replace $X_W$ by
  $\overline{X}_W$ and $\XW$ by $\overline{\XW}$, then by continuity
  the diagram still commutes.  Since $\bly$ is surjective, Proposition
  \ref{bordo-bordo} implies that $\bly(\overline{\XW}) = \ci \cap
  \convo$, hence
  \begin{gather*}
    \thw\circ \bly^W (\overline{X}_W) = \bly(\overline{\XW})
    = \ci \cap \convo.
  \end{gather*}
  So $\thw(\intec_W) =\thw\circ \bly^W (X_W) $ is dense in
  $\ci\cap\convo$.  As $\intec_W$ is open in $\liek_W^*$,
  $\thw(\intec_W) $, is an open convex domain in $\thw(\liek_W^*)$
  which is its affine hull.  Lemma \ref{relinquo} implies that
  \eqref{eq:Theta-leccamelo} holds true and concludes the proof of
  (b).  Now we wish to prove that $\bly$ is injective on the
  boundary. By the above it is injective on $X$ and also on each
  boundary component $\XW$. Moreover each boundary component $\XW$ is
  mapped to $\relint F_W$.  If $\XW \neq \XWp$, then $W\neq W'$ hence,
  by Lemma \ref{spacco-la-faccia} (c), $\relint F_W \neq \relint
  F_{W'}$. But $F_W$ and $F_{W'}$ are faces of $ \convo$, by Lemma
  \ref{spacco-la-faccia} (a), hence $\relint F_W \cap \relint
  F_{W'}=\vacuo$. It follows that $\bly$ is injective. Since $\XS$ is
  compact and $\convo$ is Hausdorff $\bly$ is a homeomorphism. This
  finally proves (a).
\end{proof}

\begin{teo}
  \label{grado}
  Let $\mis$ be a $\tau$-admissible measure.  Then the
  Bourguignon-Li-Yau map is $\blym : \XS\ra \convo$ is surjective.
\end{teo}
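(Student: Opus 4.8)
The plan is to deform the measure $\mis$ into the $K$-invariant measure $\mu$, for which Theorem~\ref{omeo-K} already gives that $\bly_\mu$ is a homeomorphism of $\XS$ onto $\convo$, and to conclude by a homotopy (degree) argument. Set $\mis_t:=(1-t)\mu+t\mis$ for $t\in[0,1]$. The measure $\mu$, being absolutely continuous with respect to the Riemannian measure, is $\tau$-admissible, and a convex combination of probability measures not charging the hyperplane sections of $M\subset\PP(V)$ still does not charge them; hence every $\mis_t$ is a $\tau$-admissible probability measure. By Theorem~\ref{estendo} each $\bly_{\mis_t}$ extends continuously to $\XS$ with $\bly_{\mis_t}(\XS)\subset\convo$, and the extension formula \eqref{eq:blym-ext} shows that for every $p\in\XS$
\begin{gather*}
  \bly_{\mis_t}(p)=(1-t)\,\bly_\mu(p)+t\,\blym(p).
\end{gather*}
Consequently $H:\XS\times[0,1]\to\convo$, $H(p,t):=\bly_{\mis_t}(p)$, is continuous (it is $\convo$-valued because $\convo$ is convex).

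The key step is to verify that $H$ carries $\partial\XS\times[0,1]$ into $\partial\convo$. Let $p\in\partial\XS$. By Theorem~\ref{ziasti} there is a proper $\tau$-connected subspace $W$ with $p\in\XW$. Applying Proposition~\ref{bordo-bordo} both to $\mu$ and to $\mis$ yields $\bly_\mu(\XW),\blym(\XW)\subset E_W\cap\convo=F_W$, and since $F_W$ is convex we get $H(p,t)\in F_W$ for all $t$. By Lemma~\ref{spacco-la-faccia} $F_W$ is a face of $\convo$, and it is a proper face (otherwise $F_W=\convo=F_V$ would force $W=V$ by that same lemma); a proper face of a compact convex body is contained in its boundary. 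Hence $H(\partial\XS\times[0,1])\subset\partial\convo$.

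Finally, $\XS$ is homeomorphic to $\convo$ (Theorem~\ref{omeo-K}) and $\convo$ is a compact convex body with nonempty interior (Lemma~\ref{intec-nonvuoto}), so both are homeomorphic to a closed ball of some dimension $N$; thus $H_N(\XS,\partial\XS)\cong H_N(\convo,\partial\convo)\cong\Zeta$. Moreover $\bly_\mu$ is a homeomorphism of pairs $(\XS,\partial\XS)\to(\convo,\partial\convo)$ (by Theorems~\ref{omeo-K} and \ref{infiammata}, which give $\bly_\mu(X)=\intec$), hence induces an isomorphism on $H_N(-,\partial)$. Since $H$ is a homotopy of pairs, $(\blym)_*=(\bly_\mu)_*$ is again an isomorphism $H_N(\XS,\partial\XS)\to H_N(\convo,\partial\convo)$. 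If some $z_0\in\intec$ were not in $\blym(\XS)$, then $\blym$ would factor through $\convo\setminus\{z_0\}$, which deformation retracts onto $\partial\convo$, so that $H_N(\convo\setminus\{z_0\},\partial\convo)=0$ and $(\blym)_*$ would vanish --- a contradiction. Therefore $\intec\subset\blym(\XS)$; as $\blym(\XS)$ is compact, $\convo=\overline{\intec}\subset\blym(\XS)\subset\convo$, i.e. $\blym(\XS)=\convo$.

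The main obstacle is the second step: the straight-line homotopy of Bourguignon-Li-Yau maps must keep $\partial\XS$ inside $\partial\convo$ for every intermediate value of $t$, and this is not a consequence of Theorem~\ref{borda} alone, since a convex combination of two boundary points of a convex body can be an interior point. What saves the argument is the sharper information that each boundary stratum $\XW$ is carried into a single proper face $F_W$ of $\convo$, which is exactly what Proposition~\ref{bordo-bordo} together with Lemma~\ref{spacco-la-faccia} provides.
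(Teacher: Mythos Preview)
Your proof is correct and follows essentially the same homotopy/degree strategy as the paper. One simplification: since each $\mis_t$ is itself $\tau$-admissible, Theorem~\ref{borda} applies directly to $\bly_{\mis_t}$ and already gives $H(\partial\XS\times[0,1])\subset\partial\convo$ without the face-by-face argument you flag as the ``main obstacle''; your detour via Proposition~\ref{bordo-bordo} and convexity of $F_W$ works, but is not needed.
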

\begin{proof}
  Set $\mis_t:=(1-t)\mu + t\mis$ and
  \begin{gather*}
    H: \XS \times [0,1] \ra \convo \qquad H(p,t) := \bly_{\mis_t} (p).
  \end{gather*}
  $\mis_t$ is $\tau$-admissible for every $t\in [0,1]$, $H$ is
  continuous and $H(\partial\XS \times [0,1] ) \subset \partial
  \convo$ by Theorem \ref{borda}. Since $\bly\restr{\partial\XS} =
  H(\cdot, 0) \restr{\partial\XS}$ is a homeomorphism, it has degree
  $1$. Hence the same holds for $H(\cdot, 1)
  \restr{\partial\XS}=\blym\restr{\partial\XS} $. By a classical
  topological argument this yields the surjectivity of $H(\cdot, 1) =\blym$.
\end{proof}

\subsection{Furstenberg compactifications}
\label{section-furst}

Another way to compactify $X=G/K$ was found by Furstenberg
\cite{furstenberg-Poisson} in his search for an analogue of the
Poisson formula for the unit disc.  We recall very briefly the
definition in the case of type IV symmetric spaces (see \cite[\S
I.6]{borel-ji-libro} for the general case).  Let $G$ be a connected
complex semisimple Lie group and $K$ a maximal compact subgroup. A
homogeneous space $M=G/P$ is called a \emph{Furstenberg boundary} of
$G$ if for every probability measure $\nu$ on $M$, there exists a
sequence $g_j \in G$ and a point $x\in M$ such that ${g_j}\pf \nu
\rightharpoonup \delta_x$. Using Iwasawa structure theory, Moore
\cite[Thm. 1]{moore-compactifications} proved that $G/P$ is a boundary
if and only if $P$ is parabolic. In this case $K$ acts transitively
and $M$ has a unique $K$-invariant probability measure $\mu$.  For any
topological space $Z$, let $\proba(Z)$ denote the set of Borel
probability measures on $Z$ provided with the weak topology.  Since
$\mu$ is $K$-invariant the map
\begin{gather*}
  G \ra \proba(M) \qquad g \mapsto g\pf \mu
\end{gather*}
descends to a continuous map $i_{M} : X=G/K \ra \proba(M)$, which is
injective iff $P$ does not contain simple factors of $G$ (see
\cite[Thm. 4]{moore-compactifications} or
\cite[Prop. I.6.16]{borel-ji-libro}).  In this case $M$ is called a
\enf{faithful} Furstenberg boundary and the set
\begin{gather}
  \label{eq:def-furstenberg}
  \XF: = \overline {i_{M} (X)}
\end{gather}
is called the \emph{Furstenberg compactification} of $X$ associated to
the the faithful boundary $M$.  Fix an irreducible complex
representation $\tau : G \ra \Gl(V)$ such that $P$ is the stabilizer
of some $x_0\in \PP(V)$ and $\ker \tau$ is finite. Such
representations always exist. Then $M$ can be identified with the
orbit $G\cdot x_0$, which is the unique closed orbit in $\PP(V)$.  If
a $K$-invariant Hermitian product is fixed on $V$, $M$ becomes a flag
manifold, so we are back to the previous setting.
\begin{teo}
  \label{furst}
  The map
  \begin{gather*}
    \Ga : \XS \ra \XF \qquad \Ga(p) := \ratp\pf \mu
  \end{gather*}
  is a $G$-equivariant homeomorphism of $\XS$ onto $\XF$ such that
  $i_M=\Ga \circ i_\tau$ (compare \eqref{eq:imbeddoinpos}).
\end{teo}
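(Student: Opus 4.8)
The plan is to check, in order, that $\Gamma$ is well defined and continuous, that it satisfies $i_M=\Gamma\comp i_\tau$, that it is $G$-equivariant, that its image is exactly $\XF$, and finally that it is injective; only the last step requires a real idea, and it will come from Theorem \ref{omeo-K}.

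Well-definedness: for $p\in\XS$ let $W$ be the unique \tauc subspace with $p\in\XW$. By Lemma \ref{rat-map} the rational map $\ratp$ is defined on $M\setminus\PP(W^\perp)$ with values in $M_W\subset M$, and $\PP(W^\perp)\cap M$ has measure zero for the (smooth, in particular $\tau$-admissible) Riemannian probability measure $\mu$, so $\ratp\pf\mu$ is a well defined element of $\proba(M)$. Continuity: if $p_n\to p$ in $\XS$, then $\ratp_n\to\ratp$ $\mu$-almost everywhere by Lemma \ref{convergenza-qo}; hence for every $\phi\in C(M)$, $\int_M\phi\comp\ratp_n\,d\mu\to\int_M\phi\comp\ratp\,d\mu$ by dominated convergence, i.e. $\ratp_n\pf\mu\rightharpoonup\ratp\pf\mu$ in $\proba(M)$. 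Since $\proba(M)$ is metrizable ($M$ being a compact metric space), $\Gamma$ is continuous; once $\Gamma(\XS)\subset\XF$ is known it is continuous as a map $\XS\to\XF$.

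Next I would identify $\Gamma$ over $X$. Taking $W=V$ (for which $\PP(W^\perp)=\vacuo$, $S_V=G$, $\tau_V=\tau$, $i_V=i_\tau$), Lemma \ref{rat-map} \ref{sqrat-5} shows that for $p=i_\tau(gK)$ the map $\ratp$ is the automorphism $L_{\rho(g)}$ of $M$ induced by $\rho(g)$; concretely $\tau(\rho(g))^2=\tau(g)\tau(g)^*$ by \eqref{eq:tau-theta}. Using the polar decomposition $g=\rho(g)a$ with $a\in K$ and the $K$-invariance of $\mu$, this gives $\Gamma(i_\tau(gK))=\rho(g)\pf\mu=(\rho(g)a)\pf\mu=g\pf\mu=i_M(gK)$, i.e. $i_M=\Gamma\comp i_\tau$. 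Both $i_\tau$ and $i_M$ are $G$-equivariant by construction, so $\Gamma$ is $G$-equivariant on the dense subset $i_\tau(X)\subset\XS$: $\Gamma(h\cdot i_\tau(gK))=\Gamma(i_\tau(hgK))=i_M(hgK)=h\cdot\Gamma(i_\tau(gK))$. For each fixed $h\in G$ the two maps $p\mapsto\Gamma(h\cdot p)$ and $p\mapsto h\cdot\Gamma(p)$ are continuous $\XS\to\proba(M)$ and agree on the dense set $i_\tau(X)$; as $\proba(M)$ is Hausdorff they agree everywhere, so $\Gamma$ is $G$-equivariant.

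It remains to see that $\Gamma$ is a homeomorphism onto $\XF$. Surjectivity is formal: $\Gamma(\XS)$ is compact, hence closed in the Hausdorff space $\proba(M)$, and contains $i_M(X)=\Gamma(i_\tau(X))$, hence contains $\overline{i_M(X)}=\XF$; conversely $\Gamma(\XS)\subset\overline{\Gamma(i_\tau(X))}=\XF$ by continuity. Injectivity is the main obstacle, and the key point is that $\bly$ factors through $\Gamma$: letting $B\colon\proba(M)\to\liek^*$ be the barycenter map $B(\nu):=\int_M\Mom\,d\nu$, formula \eqref{eq:blym-ext} of Theorem \ref{estendo} gives $\bly(p)=\int_M\Mom(\ratp(x))\,d\mu(x)=\int_M\Mom\,d(\ratp\pf\mu)=B(\Gamma(p))$ for every $p\in\XS$, so $\bly=B\comp\Gamma$. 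By Theorem \ref{omeo-K} the map $\bly\colon\XS\to\convo$ is a homeomorphism, in particular injective, and therefore $\Gamma$ is injective. A continuous bijection from the compact space $\XS$ onto the Hausdorff space $\XF$ is a homeomorphism, so $\Gamma$ is the desired $G$-equivariant homeomorphism, and $i_M=\Gamma\comp i_\tau$ was checked above. In short, everything except injectivity is bookkeeping with weak convergence and compactness, and injectivity falls out for free from the already-established Theorem \ref{omeo-K} via the factorization $\bly=B\comp\Gamma$ through the barycenter map.
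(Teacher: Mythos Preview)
Your proof is correct and follows essentially the same route as the paper: well-definedness and continuity from Lemma \ref{convergenza-qo} and dominated convergence, the identity $i_M=\Gamma\circ i_\tau$ via Lemma \ref{rat-map}, surjectivity by compactness and density, and injectivity by factoring $\bly$ through $\Gamma$ via a barycenter map and invoking Theorem \ref{omeo-K}. The only cosmetic difference is that the paper writes the barycenter factorization as a two-step composite $\BB\circ F$ through $\proba(\convo)$, which is exactly your single map $B$.
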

\begin{proof}
  For any $p\in \XS$ the rational map $\ratp: M \dashrightarrow M $ is
  defined $\mu$-a.e., since $\mu$ does not charge linear sections of
  $M \subset \PP(V)$. Therefore $\Ga(p) = \ratp\pf \mu$ is
  well-defined for any $p\in \XS$.  If $p_ n \to p$, then $\ratp_n \to
  \ratp$ $\mu$-a.e.  by Lemma \ref{convergenza-qo}.  It follows that
  $\Ga(p_n) \rightharpoonup \Ga(p)$. So $\Ga$ is continuous.  Moreover
  $\Ga\circ i_\tau = i_M$ (see \eqref{eq:imbeddoinpos} for the
  definition of $i_\tau$). Indeed by Lemma \ref {rat-map}
  \ref{sqrat-5}, if $p=i_\tau (gK) $, then $\ratp = L_{\rho(g)} : M
  \ra M $ so $\Ga (p) = \rho(g)\pf \mu = g\pf \mu = i_M (gK)$.
  Therefore $\Ga(M)=i_M(X)$ is dense in $\XF$. Since $\XS$ is compact
  it follows that $\Ga$ is surjective. Consider now the following maps
  \begin{gather*}
    F : \proba(M) \ra \proba(\convo) \qquad F(\mis) = \Mom\pf \mis\\
    \BB : \proba(\convo) \ra \convo \qquad \BB(\nu) = \int_{\convo}
    zd\nu(z).
  \end{gather*}
  $\BB(\nu)$ is just the barycenter of $\nu$. Now $\BB \circ F \circ
  \Ga = \bly$.  Since $\bly$ is a homeomorphism, $\Ga$ is
  injective. As $\proba(M)$ is a Hausdorff space, $\Ga$ is a
  homeomorphism.  Moreover $i_M=\Ga \circ i_\tau$, so $\Ga$ is
  $G$-equivariant on $X$. By continuity it is equivariant also on the
  compactifications.
\end{proof}

\section{Application to eigenvalue estimates}
\label{sezione-autovalore}

In this section we apply Theorem \ref{grado} to a problem in spectral
geometry. Let $M$ be a complex manifold and $g$ a \Keler metric.
Denote by $\Delta_g : \cinf(M) \ra \cinf(M)$ the Laplace-Beltrami
operator
\begin{equation*}
  \Delta_g f = -d^* d f = 
  2 \  g^{i\bar{j}}\frac{\partial^2 f}{\partial z^i \partial \bar{z}^j}.
\end{equation*}
It is well-known that $-\Delta_g$ is a positive elliptic operator with
eigenvalues $0 \leq \la_1(g) \leq \la_2(g) \leq \cdots$.  Assume that $M$ is a
Fano manifold and $g_{KE}$ is a \KE metric with \Keler class $\om_{KE}
\in 2\pi\chern_1(M)$ (i.e. $\Ric(g_{KE}) = g_{KE}$).  Denote by
$\liek$ the Lie algebra of Killing vector fields and by $\lieg$ the
algebra of holomorphic vector fields (considered as real fields). The
map $v\mapsto Jv$ endows $\lieg$ with the structure of complex Lie
algebra.  For $u \in \cinf(M,\R)$ let $X_u$ denote the Hamiltonian
vector field such that $du = -i_{X_u}\om_{KE}$.

\begin{teo}
  \label{futaki}
  Let $M$ be a Fano manifold and let $g_{KE}$ be a \KE metric as
  above.  Then (a) $\liek$ is a real form of $\lieg$; (b) if $\lieg
  \neq \{0\}$ then $\la_1(g_{KE}) =2$. (c) Set $\Lambda_1 =\{u\in
  \cinf(M,\R):- \Delta_{KE} u =2u\}$.  If $u \in \Lambda_1$, then
  $X_u\in \liek$. (d) Define $ L : \Lambda_1 \ra \liek $ by $ L(u) =
  X_u $. Then $L$ is an isomorpism and the map $\Mom: M \ra \liek^*$
  defined by
  \begin{gather*}
    \deo \Mom(x), v\ode := L\meno(v) (x)
  \end{gather*}
  is a moment map for the action of $\operatorname{Isom}(M, g_{KE})$.
\end{teo}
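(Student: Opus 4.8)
The plan is to deduce everything from the classical Bochner--Lichnerowicz--Matsushima analysis of \KE Fano manifolds: isolate the one analytic input, turn it into a dictionary between $\Lambda_1$ and holomorphic vector fields, and then read off (a)--(d). For $u\in\cinf(M,\C)$ let $\nabla^{1,0}u$ denote the complex gradient (the type-$(1,0)$ field $g_{KE}^{i\bar j}(\partial_{\bar j}u)\,\partial_i$); recall that a real vector field preserves $J$ iff its $(1,0)$-part is holomorphic. Since $\Ric(g_{KE})=g_{KE}$, one has the integral Bochner identity, for $u$ with $\int_M u\,\om_{KE}^n=0$,
\begin{gather*}
  \int_M\bigl|\bar\partial\,\nabla^{1,0}u\bigr|^2\,\om_{KE}^n
  = c\int_M\bigl(-\Delta_{KE}u\bigr)\bigl(-\Delta_{KE}u-2u\bigr)\,\om_{KE}^n\qquad(c>0).
\end{gather*}
Applied to an eigenfunction $-\Delta_{KE}u=\lambda u$ with $\lambda>0$ the right side equals $c\lambda(\lambda-2)\int_Mu^2\,\om_{KE}^n$; nonnegativity of the left side gives the \Keler Lichnerowicz bound $\lambda_1(g_{KE})\geq 2$, with equality $\lambda=2$ precisely when $\bar\partial\nabla^{1,0}u=0$. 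Since $X_u$ always preserves $\om_{KE}$, this yields the dictionary: \emph{for real zero-mean $u$ one has $u\in\Lambda_1\iff\nabla^{1,0}u\in H^0(M,T^{1,0}M)\iff X_u$ is Killing} ($X_u$ preserving $J$ and $\om_{KE}$ forces it to preserve $g_{KE}$; conversely a Killing $X_u$ preserves $g_{KE}$ and, being holomorphic by Matsushima, $J$). I will also use two standard facts about a \KE Fano $M$: $b_1(M)=0$, so $H^1(M,\R)=H^1(M,\OO_M)=0$; and Matsushima's theorem, which says $\lieg$ is reductive and every Killing field is holomorphic, i.e. $\liek\subset\lieg$.

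Parts (a)--(c) now follow quickly. By Hodge theory and $H^1(M,\OO_M)=0$, every $Z\in H^0(M,T^{1,0}M)$ is $\nabla^{1,0}f$ for a unique zero-mean $f\in\cinf(M,\C)$, and the dictionary (applied to $\mathrm{Re}\,f$ and $\mathrm{Im}\,f$) upgrades this to a complex-linear isomorphism $\lieg\cong\Lambda_1\otimes_\R\C$ carrying the real holomorphic fields onto $\Lambda_1$; by Matsushima the latter are exactly $\liek$, so $\lieg=\liek\oplus J\liek$, which is (a). For (b), if $\lieg\neq\{0\}$ take $0\neq Z=\nabla^{1,0}f\in H^0(M,T^{1,0}M)$ with $f$ non-constant; then $-\Delta_{KE}f=2f$, so $\mathrm{Re}\,f$ or $\mathrm{Im}\,f$ is a nonzero element of $\Lambda_1$, hence $2$ is an eigenvalue of $-\Delta_{KE}$ and, with $\lambda_1\geq2$, this gives $\lambda_1(g_{KE})=2$. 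For (c), $u\in\Lambda_1$ makes $\nabla^{1,0}u$ holomorphic, hence $X_u$ preserves $J$ and $\om_{KE}$, hence $X_u\in\liek$.

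For (d): $L(u)=X_u$ is $\R$-linear and injective ($X_u=0\Rightarrow du=0\Rightarrow u$ constant $\Rightarrow u=0$ by zero mean); it is surjective because any $Y\in\liek$ is holomorphic (Matsushima), hence preserves $\om_{KE}$, so $i_Y\om_{KE}$ is closed and, as $H^1(M,\R)=0$, equals $-du$ for a unique zero-mean $u$ with $Y=X_u$, and then $\nabla^{1,0}u$ is holomorphic so $u\in\Lambda_1$. Thus $L$ is a linear isomorphism. Setting $\deo\Mom(x),v\ode:=L\meno(v)(x)$ and writing $v=X_u=L(u)$, the fundamental field is $\xi_v=X_u$ and $d\deo\Mom,v\ode=du=-i_{X_u}\om_{KE}=-i_{\xi_v}\om_{KE}$, the first moment-map axiom; equivariance (it is enough to check it on the connected group $\operatorname{Isom}(M,g_{KE})_0$) amounts to $v\mapsto L\meno(v)$ being a Lie algebra homomorphism into $(\cinf(M),\{\cdot,\cdot\})$, which holds since $[X_u,X_w]=X_{\{u,w\}}$, the bracket $\{u,w\}$ has zero mean (Poisson brackets integrate to $0$, as in the proof of Lemma \ref{intec-nonvuoto}), and $X_{\{u,w\}}=[L(u),L(w)]\in\liek$, so $\{u,w\}=L\meno([X_u,X_w])\in\Lambda_1$; in particular $L$ is even a Lie algebra isomorphism. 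The only genuine content is the Bochner identity of the first paragraph and the input (Matsushima) that Killing fields on a \KE Fano are holomorphic, both classical; the rest is bookkeeping with the paper's sign conventions ($-\Delta_{KE}$ is the positive operator, $d\deo\Mom,v\ode=-i_{\xi_v}\om$) and with the harmless constant $c$ --- and the main obstacle, such as it is, lies in getting these signs and normalizations right rather than in any new idea.
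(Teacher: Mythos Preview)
The paper does not actually prove this theorem: immediately after the statement it simply records that ``this statement combines results of Matsushima and Futaki'' and gives references to Kobayashi, Futaki's book, and Futaki's paper. There is therefore no argument in the paper to compare yours against.

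Your reconstruction is precisely the classical route those references take: the Bochner--Lichnerowicz identity on a \KE manifold with $\Ric=g$ gives $\la_1\geq 2$ with equality characterized by $\bar\partial\nabla^{1,0}u=0$; Kodaira vanishing gives $H^1(M,\OO_M)=0$ so every holomorphic vector field is a complex gradient; and Matsushima's theorem identifies $\liek$ inside $\lieg$. From this dictionary parts (a)--(d) follow as you indicate. The argument is correct. One small remark: in your ``converse'' step of the dictionary (Killing $X_u\Rightarrow u\in\Lambda_1$) you do not actually need to invoke Matsushima, since a Hamiltonian $X_u$ already preserves $\om_{KE}$, and if it is also Killing it preserves $g_{KE}$, hence $J$, hence $\nabla^{1,0}u$ is holomorphic directly; Matsushima is genuinely needed only for (a) and for the surjectivity of $L$ in (d), where you must know that an arbitrary Killing field (not a priori Hamiltonian) is holomorphic.
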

This statement combines results of Matsushima and Futaki, see
\cite[p. 95]{kobayashi-trans}, \cite[Thm.  2.4.3, p.41]{futaki-libro},
\cite[Lemma 4.2]{futaki-ridotto}.

Assume now that $M$ is a Hermitian symmetric space of the compact
type.  We are interested in upper bounds for $\la_1(g)$ that hold for
any \Keler metric in $2\pi\chern_1(M)$.  Using the notation
\eqref{iaap} of the introduction, we wish to estimate
$I(2\pi\chern_1(M))$.  Let $g_{KE}$ be the symmetric \KE metric with
$\Ric(g_{KE}) = g_{KE}$ and let $\om_{KE}$ be its \Keler form.  Let
$K$ denote the connected component of the identity of
$\operatorname{Isom}(M,g_{KE})$. $K$ acts transitively on $M$.  Set
$n:=\dim_\C M$.
\begin{lemma}
  \label{lemmazk}
  Let $e_1, \lds, e_l$ be an orthonormal basis of $\liek:=\Lie \cK$
  with respect to the scalar product $-B/2$ ($B$ is the Killing form).
  Set $f_j=\deo \Mom, e_j\ode$ and
  \begin{gather*}
    \alfa : = \sum_{j=1}^l i \partial f_j \wedge \debar f_j .
  \end{gather*}
  Then $ \alfa = \om_{KE} $ and $ |\Mom|^2 =n$.
\end{lemma}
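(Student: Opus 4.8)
The plan is to reduce to the irreducible case and then argue directly from the symmetric-space structure; since the statement fails for general flag manifolds, a structural proof is the natural route. First note that $\alfa$ and $|\Mom|^2=\sum_j f_j^2$ are unchanged under an orthogonal change of the basis $\{e_j\}$, so they do not depend on the chosen orthonormal basis of $\liek$. Writing the de Rham splitting $M=M_1\times\cdots\times M_r$ into irreducible Hermitian symmetric spaces of the compact type, scale invariance of $\Ric$ turns $\Ric(g_{KE})=g_{KE}$ into $\Ric(g_i)=g_i$ on each factor, while $\liek=\bigoplus\liek_i$ ($B$-orthogonally), $\om_{KE}=\sum\mathrm{pr}_i^*\om_i$, $\Mom=\bigoplus\Mom_i$ and $n=\sum n_i$. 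Choosing $\{e_j\}$ adapted to this splitting, $\alfa$ and $|\Mom|^2$ become sums over the factors, so it suffices to prove the lemma when $M=K/L$ is irreducible.

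For the irreducible case I would fix a point $p$, let $\liek=\liel\operp\liem$ be the Cartan decomposition with $\liem\cong T_pM$ via $v\mapsto\xi_v(p)$ (so $\dim_\R\liem=2n$), and recall two standard facts: (i) the center of $\liel$ is a line $\R Z_0$, where $Z_0$ satisfies $\ad(Z_0)\restr{\liem}=J$, hence $\ad(Z_0)\restr{\liel}=0$ and $\ad(Z_0)^2\restr{\liem}=-\Id$; and (ii) since $\Ric(-B\restr{\liem})=-\tfrac12 B\restr{\liem}$ for a compact symmetric space, scale invariance of $\Ric$ and $\Ric(g_{KE})=g_{KE}$ force $g_{KE}\restr{\liem}=-\tfrac12 B\restr{\liem}$, i.e. $g_{KE}$ is the restriction of $-B/2$ to $\liem\cong T_pM$. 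Then $|\Mom|^2$ is constant, because $\Mom$ is equivariant, $K$ is transitive, and $-B/2$ is $\Ad$-invariant. Identifying $\liek^*$ with $\liek$ via $-B/2$, equivariance forces $\Mom(p)\in(\liek)^L=\R Z_0$ (the isotropy representation on $\liem$ has no nonzero fixed vector, and $(\liel)^L=\liez(\liel)=\R Z_0$); comparing $\desudtzero\deo\Mom(\exp(tw)p),v\ode=\deo\Mom(p),[v,w]\ode$ with $d\deo\Mom,v\ode(p)(\xi_w(p))=-\om_p(v,w)$ for $v,w\in\liem$, and plugging in (i)--(ii), pins down $\Mom(p)=\pm Z_0$. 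Therefore $|\Mom|^2=|\Mom(p)|^2=-\tfrac12 B(Z_0,Z_0)=-\tfrac12\tr(\ad(Z_0)^2)=\tfrac12\dim_\R\liem=n$, since $\ad(Z_0)$ kills $\liel$ and squares to $-\Id$ on $\liem$.

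For $\alfa=\om_{KE}$ I would first establish the stronger identity $\sum_j df_j\otimes df_j=g_{KE}$ between symmetric $2$-tensors. This tensor is $K$-invariant: for $a\in K$ one has $f_j\comp a=\sum_l O_{jl}f_l$ with $O$ orthogonal (because $\Ad(a\meno)$ preserves $-B/2$), and an orthogonal substitution leaves $\sum_j df_j\otimes df_j$ fixed; as $K$ is transitive it suffices to verify the identity at $p$. There $df_j(p)=-i_{\xi_{e_j}(p)}\om_p=-\om_p(m_j,\cdot)$, where $m_j:=\xi_{e_j}(p)$ is the image of the orthogonal projection of $e_j$ onto $\liem$. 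By (ii) the $m_j$ form a tight frame of $(T_pM,g_{KE})$, i.e. $v=\sum_j g_{KE}(v,m_j)m_j$, and so do the $Jm_j$ as $J$ is an isometry; hence $\sum_j df_j(p)\otimes df_j(p)=\sum_j g_{KE}(Jm_j,\cdot)\otimes g_{KE}(Jm_j,\cdot)=g_{KE}(p)$. Finally write $df_j=\partial f_j+\debar f_j$ with $\debar f_j=\overline{\partial f_j}$; since $g_{KE}$ has type $(1,1)$, the pure-type parts of $\sum_j df_j\otimes df_j=g_{KE}$ give $\sum_j\partial f_j\otimes\partial f_j=0$, while the $(1,1)$-part reads $\sum_j(f_j)_i\overline{(f_j)_k}=g_{i\bar k}$ in local holomorphic coordinates, whence $\alfa=i\sum_j(f_j)_i\overline{(f_j)_k}\,dz^i\wedge d\bar z^k=\om_{KE}$.

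The main obstacle is not conceptual but the careful bookkeeping of normalizations and signs: the factor $\tfrac12$ in $g_{KE}\restr{\liem}=-\tfrac12 B\restr{\liem}$ dictated by the Einstein condition, the conventions linking $\om$, $J$ and the moment map, and the exact form of the two structural facts about $Z_0$ and the isotropy representation. In particular it is precisely the Einstein normalization that makes the constant in $\sum_j df_j\otimes df_j=g_{KE}$ equal to $1$ rather than some multiple $c\,g_{KE}$, so that is the step to treat with most care.
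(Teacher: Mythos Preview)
Your argument is correct, and the core ingredients are the same as the paper's: the element $Z_0$ in the centre of the isotropy with $\ad Z_0\restr{\liem}=J$ (the paper calls it $H$), and the normalization $g_{KE}\restr{\liem}=-\tfrac12 B$ forced by the Einstein condition. The execution differs in two respects. First, the paper does not reduce to the irreducible case: it writes down the moment map explicitly as $\deo\Mom(a\cdot x_0),v\ode=\tfrac12 B(H,\Ad(a^{-1})v)$ and verifies the defining property directly, which works without assuming $\liez(\liel)$ is one\nobreakdash-dimensional; your equivariance argument to pin down $\Mom(p)\in\R Z_0$ genuinely needs irreducibility, hence the preliminary de~Rham splitting. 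Second, and more substantively, for $\alfa=\om_{KE}$ the paper expands $i\partial f\wedge\debar f(w_1,w_2)=\tfrac12(w_2 f\cdot Jw_1 f - w_1 f\cdot Jw_2 f)$ in real variables and sums over the basis using the explicit formula for $\Mom$, whereas you prove the stronger symmetric-tensor identity $\sum_j df_j\otimes df_j=g_{KE}$ via the tight-frame property of the projections $m_j$ and then extract the $(1,1)$ part. Your route is cleaner and yields a bit more (the full metric, not just the form); the paper's route is more computational but avoids the reduction step and the type-decomposition bookkeeping.
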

\begin{proof}
  Fix a point $x_0\in M$ and let $K_0$ be its stabilizer.  If $\liek =
  \liek_0 \oplus \liem$, then $\liem$ is $\ad\ K_0$-invariant and we
  identify it with $T_{x_0}M$ via the mapping $v\mapsto \xi_v(x_0)$.  By
  \cite[p. 261]{KN-I} there is $H\in \liez(\liek_0)$ such that $J_{x_0} =
  \ad H\restr{\liem}$.  The \KE metric $g_{KE}$ is the one associated
  with the scalar product $\hh=- \demi B$ on $\liem$, see
  \cite[p. 262]{KN-I}.  Define a map $ \Mom: M \ra \liek^*$ by
  \begin{gather}
    \deo \Mom(a \cdot x_0), v \ode = \frac{1}{2} B ( H, \Ad(a\meno) v
    ).\label{momzk}
  \end{gather}
  This map is well defined since $H \in \liez(K_0)$. Moreover $\Mom$
  is $K$-equivariant.  We claim that $\Mom$ is the moment map of $(M,
  \om_{KE})$. Indeed for $w\in T_{x_0}M$
  \begin{gather}
    \label{zk}
    \begin{gathered}
      d \deo \Mom, v\ode(x_0)( w) = \desudtzero \deo \Mom ( e^{tw}
      \cdot x_0) ,
      v\ode = \demi \desudtzero  B(H,\Ad(  e^{-tw}) v)  = \\
      = - \demi B(H, [w,v]) =-\demi B ([v, H], w).
    \end{gathered}
  \end{gather}
  Let $ v= v_1 + v_2 $ with $v_1\in \liek_0$, $ v_2 \in \liem$. Then $
  [v, H] = [v_2, H]$ and $\xi_v(x_0) = v_2$, so
  \begin{gather*}
    d \deo \Mom, v\ode(x_0)( w) = \demi B([H, v_2], w) = -g_{KE}
    (J_{x_0} v_2, w) = -\om_{KE} (\xi_{v_2}(x_0), w) =\\
    = - i_{\xi_v}\om_{KE}(w).
  \end{gather*}
  This shows that $d\deo \Mom, v\ode = - i{\xi_v}\om_{KE}$ at
  $x_0$. By $K$-equivariance this holds on all of $M$, so $\Mom$ is
  the moment map.  A simple computation shows that for $f\in \cinf(M,
  \R)$ and tangent vectors $X,Y$
  \begin{gather*}
    i\partial f \wedge \debar f (X,Y) = \demi ( Yf\cdot JXf - Xf \cdot
    JYf).
  \end{gather*}
  Apply this to $X=w_1, Y=w_2$ and $f=\deo \Mom , v \ode$, where $w_i
  \in \liem$ and $v\in \liek$.  Using \eqref{zk} and $[H, [H,w_i]] =
  J^2 w_i = -w_i$ we get
  \begin{gather*}
    i\partial f \wedge \debar f (w_1, w_2) = \\
    = \demi \biggl ( \demi B\left (H, [v,w_2] \right) \demi B (H, [v,[H,w_1]]) -
    \demi B(H, [v,w_1]) \demi B (H, [v,[H,w_2]]) \biggr )
    =\\
    =\demi \biggl ( \hh ([H,w_2], v) \hh (v, [H,[H,w_1]]) - \hh
    ([H,w_1], v) \hh (v, [H,[H,w_2]]) \biggr ) = \\
    =\demi ( \hh ([H,w_1], v) \hh (v,w_2) - \hh ([H,w_2], v) \hh
    (v,w_1) ) .
  \end{gather*}
  Since $e_j$ is an orthonormal basis of $(\liek, \hh)$
  \begin{gather*}
    \alfa(w_1, w_2) = \demi \sum_{j=1}^l \biggl ( \hh ([H,w_1], e_j)
    \hh (e_j, w_2) - \hh ([H,w_2], e_j) \hh (e_j, w_1) \biggr )=
    \\
    = \demi \biggl ( g_{KE} (Jw_1, w_2)- g_{KE} ( Jw_2, w_1) \biggr )
    = \om_{KE}(w_1, w_2).
  \end{gather*}
  This proves that $\alfa=\omega_{KE}$.  The norm $|\Mom|$ is constant
  since $K$ is transitive.  Using \eqref{momzk} we get
  \begin{gather*}
    |\Mom|^2(x_0)= \sum_{j=1}^l \deo \Mom, e_j \ode^2 = \sum_j \bigl [
    \demi B ( H,e_j) \bigr ] ^2 = \sum_j [ \hh(H, e_j)]^2 = \\ = \hh
    (H, H) = -\demi B(H,H).
  \end{gather*}
  But $(\ad H)^2$ is $0$ on $\liek_0$ and is $-1$ on $\liem$. Since
  $\dim_\R \liem = 2n$, $ B(H,H) = -2n$ and $|\Mom|^2=n$.
\end{proof}
\begin{teo}
  \label{teo-sotto}
  Let $M$ be a Hermitian symmetric space of the compact type. Let $N$
  be a \Keler manifold and let $a\in H^2(N)$ be a \Keler class.  Let
  $F: N \ra M$ be a holomorphic map and assume that no nontrivial
  section of $-K_M$ vanishes on $F(N)$.  Let $n=\dim_C M$ and
  $d=\dim_C N$. Then for any K\"ahler metric $g$ with K\"ahler form
  $\om \in a$
  \begin{gather}
    \label{stima}
    \la_1(g) \leq \frac{4 \pi d}{n} \cdot \frac{ \int_N
      F^*(\chern_1(M) ) \cup a^{d-1}} { \int_N a^{d}}.
  \end{gather}
\end{teo}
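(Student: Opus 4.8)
The plan is to follow the method of Bourguignon, Li and Yau: build a family of test functions on $N$ out of the moment map of $M$, center them by means of the Bourguignon--Li--Yau map, and estimate the resulting Rayleigh quotients with Lemma \ref{lemmazk}.

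First I would fix the flag manifold picture for $M$. Being Hermitian symmetric of the compact type, $M$ is Fano; with $K:=\operatorname{Isom}_0(M,g_{KE})$, $G:=K^\C$ and $V:=H^0(M,-K_M)^*$ (equipped with a $K$-invariant Hermitian product) the space $(M,\om_{KE})$ is the flag manifold (\S\ref{section-flags}) associated to $G,K$ and the anticanonical representation $\tau$ of $G$ on $V$, and its moment map is the map $\Mom:M\ra\liek^*$ of Theorem \ref{futaki}, so that $|\Mom|^2\equiv n$ and $\sum_j i\,\partial\langle\Mom,e_j\rangle\wedge\debar\langle\Mom,e_j\rangle=\om_{KE}$ by Lemma \ref{lemmazk}, for any basis $e_1,\lds,e_l$ of $\liek$ orthonormal with respect to $-B/2$. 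Write $\OO=\Mom(M)$, let $\convo$ be its convex hull and $\intec$ its interior. Under the duality $\PP(V)=\PP\bigl(H^0(M,-K_M)^*\bigr)$ a hyperplane $H\subset\PP(V)$ meets $M$ along the zero divisor of the corresponding section of $-K_M$, so the hypothesis that no nontrivial section of $-K_M$ vanishes on $F(N)$ says precisely that $F(N)$ lies in no hyperplane of $\PP(V)$; hence $F\meno(H)$ is, for every such $H$, a proper analytic subset of $N$ and has zero volume, so the probability measure
\begin{gather*}
  \mis:=\frac{1}{\vol(N,g)}\,F\pf(\dvol_g)
\end{gather*}
on $M$ is $\tau$-admissible.

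Next I would center. By Theorem \ref{grado} the Bourguignon--Li--Yau map $\blym:\XS\ra\convo$ is surjective; since $0\in\intec$ by Lemma \ref{intec-nonvuoto} and $\blym(\partial\XS)\subset\partial\convo$ by Theorem \ref{borda}, every preimage of $0$ lies in $X=G/K$. Fix $g_0\in G$ with $\blym(g_0K)=0$ and put $\psi:=\rho(g_0)\circ F:N\ra M$, which is holomorphic because $\rho(g_0)$ is a biholomorphism of $M$. The change of variables formula \eqref{eq:pushforward} gives
\begin{gather*}
  \int_N\Mom\bigl(\psi(x)\bigr)\,\dvol_g(x)=\vol(N,g)\,\blym(g_0K)=0.
\end{gather*}
Now set $f_j:=\langle\Mom\circ\psi,e_j\rangle\in\cinf(N,\R)$. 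By the last display $\int_N f_j\,\dvol_g=0$ for every $j$, so the variational characterization of $\la_1(g)$ gives $\la_1(g)\int_N f_j^2\,\dvol_g\leq\int_N|df_j|^2\,\dvol_g$; summing over $j$ and using $\sum_j f_j^2=|\Mom|^2\circ\psi\equiv n$ we obtain
\begin{gather*}
  n\,\vol(N,g)\,\la_1(g)\leq\sum_{j=1}^l\int_N|df_j|^2\,\dvol_g=\frac{2}{(d-1)!}\int_N\Bigl(\sum_{j=1}^l i\,\partial f_j\wedge\debar f_j\Bigr)\wedge\om^{d-1},
\end{gather*}
the last equality being the pointwise Kähler identity $|df_j|^2\,\dvol_g=\frac{2}{(d-1)!}\,i\,\partial f_j\wedge\debar f_j\wedge\om^{d-1}$. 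Since $\psi$ is holomorphic, $\partial f_j=\psi^*\partial\langle\Mom,e_j\rangle$ and $\debar f_j=\psi^*\debar\langle\Mom,e_j\rangle$, so $\sum_j i\,\partial f_j\wedge\debar f_j=\psi^*\om_{KE}$ by Lemma \ref{lemmazk}. As $\rho(g_0)$ is a biholomorphism it fixes $\chern_1(M)$, hence $[\psi^*\om_{KE}]=F^*\bigl(2\pi\chern_1(M)\bigr)$ and
\begin{gather*}
  n\,\vol(N,g)\,\la_1(g)\leq\frac{2}{(d-1)!}\int_N\psi^*\om_{KE}\wedge\om^{d-1}=\frac{4\pi}{(d-1)!}\int_N F^*\bigl(\chern_1(M)\bigr)\cup a^{d-1}.
\end{gather*}
Dividing by $n\,\vol(N,g)=\frac{n}{d!}\int_N a^d$ and using $d!/(d-1)!=d$ yields \eqref{stima}.

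The computational half is routine once Lemma \ref{lemmazk} is available; the delicate points are the two ingredients of the opening paragraphs. The first is the identification of hyperplane sections of $M\subset\PP(V)$ with zero divisors of anticanonical sections, which is what converts the hypothesis on $-K_M$ into $\tau$-admissibility of $\mis$. The second, and the real crux, is that a preimage of $0$ under $\blym$ cannot sit on $\partial\XS$: this uses that $0$ is an \emph{interior} point of $\convo$ (Lemma \ref{intec-nonvuoto}) together with $\blym(\partial\XS)\subset\partial\convo$ (Theorem \ref{borda}). Without the surjectivity of $\blym$ for a general $\tau$-admissible $\mis$ (Theorem \ref{grado}) the centering point $g_0$ need not exist in $G$ and the argument would break down.
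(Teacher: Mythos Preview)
Your proof is correct and follows essentially the same route as the paper's: realize $(M,\om_{KE})$ as the flag manifold for the anticanonical representation, push forward $\dvol_g$ via $F$ to get a $\tau$-admissible measure, center the moment map components using surjectivity of $\blym$, and evaluate the Rayleigh quotient with Lemma~\ref{lemmazk}. You are in fact slightly more careful than the paper in one place: you explicitly invoke Theorem~\ref{borda} to guarantee that the preimage of $0$ lies in $X$ rather than on $\partial\XS$, whereas the paper asserts $\blym(X)=\intec$ directly from Theorem~\ref{grado} (which, strictly, only gives surjectivity onto $\convo$ and needs Theorem~\ref{borda} to conclude).
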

\begin{proof}
  Let $g_{KE}$, $\om_{KE}$, $K$, $\liek$ and $\lieg$ have the same
  meaning as above. Let $G$ denote the connected component of the
  identity of the automorphism group of $M$. By Theorem \ref{futaki}
  $K$ is a maximal compact subgroup of $G$.  Set $X:=G/K$.  Since $K$
  acts transitively the norm $|\Mom|$ is constant. Therefore, using
  the notation of Lemma \ref{lemmazk} and setting $l:=\dim K$,
  \begin{gather*}
    \sum_{j=1}^l \int_M f_j^2 \frac{\om_{KE}^n}{n!} = |\Mom|^2
    \vol(M,g_{KE}).
  \end{gather*}
  The anticanonical bundle $-K_M$ is very ample.  Set $V:=H^0 (M,
  -K_M)^*$.  Since $-K_M$ is $G$-equivariant there is an obvious
  representation $\tau : G \ra \Gl(V)$.  The metric $g_{KE}$ induces
  an $L^2$-scalar product $\scalo$ on $V$ which is $K$-invariant. By
  the Borel-Weil theorem the representation $\tau$ is irreducible and
  the map $ \phi : M \ra \PP(V)$ induced by the linear system $|-K_M|$
  is a biholomorphism onto the unique closed orbit in $\PP(V)$.  Let
  $g_{FS}$ be the Fubini-Study metric on $\PP(V)$ induced by $\scalo$,
  and let $\om_{FS}$ be its \Keler form. Since $M$ is a compact
  symmetric space any cohomology class contains a unique invariant
  form \cite[Thm. 8.5.8 p. 250]{wolf-spaces}. Hence $\phi^* \om_{FS} =
  \om_{KE}$. So the data $G, K, \tau, \scalo$ satisfy the same
  assumption set forth at p. \pageref{data} and $(M, \om_{KE})$ is the
  associated flag manifold as defined in \S \ref{section-flags}.  Let
  $\Mom: M \ra \liek^*$ be the moment map of $M$ provided with these
  structures, set $\OO=\Mom(M)$ and let $\intec$ be the interior of
  $\convo$.  Set
  \begin{gather*}
    \mis : = \frac{1}{\vol(N,g)} F\pf \vol_g.
  \end{gather*}
  Any hyperplane $H\subset \PP(V)$ is defined by the vanishing of a
  section $s\in V^*$, so by hypothesis $F\meno (H)$ is a proper
  complex subvariety of $N$.  Therefore $\mis$ is a $\tau$-admissible
  measure on $M$.  By Theorem \ref{grado} $\blym(X) = \intec$ and by
  Lemma \ref{intec-nonvuoto} $0\in \intec$. This means that there some
  $b\in G$ such that
  \begin{gather}
    \label{eq:sottosistema}
    \blym(bK)=\int_M \Mom(\rho(b)\cdot x)d\mis(x) =0.
  \end{gather}
  If we set $a=\rho(b) $ and $h_j = f_j \circ a \circ F$, then
  the above formula yields
  \begin{gather*}
    \int_N h_j (y) \vol_g(y)=0
  \end{gather*}
  for $j=1,\lds, l$. So the functionss $h_j$ have zero mean. By the
  Rayleigh theorem we get
  \begin{gather}
    \label{eq:rayleigh}
    \la_1 (N,g) \leq \frac{\sum_{j=1}^l \int_N |\nabla h_j|^2_g
      \frac{\om^d}{d!} } {\sum_{j=1}^l \int_N h_j^2 \frac{\om^d}{d!}}.
  \end{gather}
  We have to estimate the right hand side.  Since $K$ is transitive,
  the norm $|\Mom|$ is constant. By Lemma \ref{lemmazk} 
  \begin{gather}
    \label{eq:denominatore}
    \sum_{j=1}^l \int_N h_j^2 \frac{\om^d}{d!} = |\Mom|^2 \vol(N,g) =
    \frac{n}{d!} \int_N a^d.
  \end{gather}
  Recall that for any $u\in \cinf(N,\R)$
  \begin{gather*}
    i \partial u \wedge \debar u \wedge \frac{\om^{d-1}} {(d-1)!} =
    \frac{1}{2} |\nabla u |^2 \frac{\om^d}{d!}.
  \end{gather*}
  Hence
  \begin{gather*}
    \sum_{j=1}^l \int_N |\nabla h_j|^2_g \frac{\om^d}{d!}  =
    \frac{2}{(d-1)!} \sum_{j=1}^l \int_N i\partial h_j \wedge \debar
    h_j \wedge \om^{d-1} = \\=\frac{2}{(d-1)!} \int_N F^*( a^*\alfa)
    \wedge \om^{d-1}.
  \end{gather*}
  Since $\alfa$ is closed and $G$ is connected the form $a^*\alfa$ is
  cohomologous to $\alf$. Therefore
  \begin{gather*}
    \sum_{j=1}^l \int_N |\nabla h_j|^2_g \frac{ \om^d}{\ d!}  =
    \frac{4\pi }{ (d-1)!} \int_N F^*( \chern_1(M)) \cup
    a^{d-1}.
  \end{gather*}
  Using \eqref{eq:denominatore} we get the result.
\end{proof}

\begin{remark}
  We wish to compare this result with those contained in
  \cite{arezzo-ghigi-loi} which deal with a similar situation in the
  case of the Grassmannian.  Let $V$ be a vector space of dimension
  $k$. Denote by $M$ the Grassmannian of $r$-planes in $V$, by $U\ra
  M$ the tautological bundle ($U_x=x$) and by $\OO_M(1)$ the
  hyperplane bundle of the Pl\"ucker embedding. Then $V^*=H^0(M, U^*)$,
  $ \Lambda^r U^*= \OO_M(1)$, $-K_M = \OO_M(k)$ and $H^0(M, \OO_M(1))=
  \est^r V^*$.

  Assume that $E \ra N$ is a globally generated holomorphic vector
  bundle of rank $r$ with $H^0(N,E)=V^*$. Consider the map $ F: N \ra
  M$, $ F(x) = \operatorname{Ann}(\{s\in V^*: s(x)=0\})$.  By
  construction $E=F^*(U^*)$. We claim that if the map $F$ satisfies
  the condition of Theorem \ref{teo-sotto}, i.e. no section of $-K_M$
  vanishes on $F(N)$, then the Gieseker point $T_E$ is stable. Indeed,
  since $-K_M=\OO_M(k)$, no section of $\OO_M(1)$ vanishes on $F(N)$,
  hence the map $F^*:H^0(M,\OO_M(1))= H^0(M, \est^r U^*) 
  \ra H^0(N,\est^r E)$ is injective and the Gieseker points of $E$ and
  $U^*$ are related by $T_E=F^*\circ T_{U^*} $. It follows that $T_E$
  is stable.  So in this particular case, where $F$ is the map
  corresponding to the full linear system of sections of $E$, our
  Theorem \ref{teo-sotto} is a consequence of Theorem 1.1 in
  \cite{arezzo-ghigi-loi}.

  More generally, assume that $N$ is a manifold, $M$ is the
  Grassmannian of $r$-planes in some $V$ and $F: N \ra M$ is some map
  satisfying the assumption of Theorem \ref{teo-sotto}.  Since
  $F^*:H^0(M, \OO_M(1))=\est^r V \ra H^0(N,E)$ is injective and
  $\est^r$ commutes with pull-back, also $F^*:H^0(M, U^*)=V \ra
  H^0(N,E)$ is injective.  The constructions in the proof of Theorem
  \ref{teo-sotto} relate to $V=H^0(M, U)$ or, equivalently, to the
  subspace $\im F^* \subset H^0(N,E)$.  Indeed coupled with a theorem
  of Xiaowei Wang (see \cite[Thm. 3.1]{wang-xiaowei-balance},
  \cite[Thm. 2.5]{biliotti-ghigi-AIF}) formula \eqref{eq:sottosistema}
  says roughly that $\im F^*$ is ''$\om$-balanced'' (see
  \cite[p. 380]{arezzo-ghigi-loi}).  This does not imply that the
  whole space $H^0(N,E)$ be $\om$-balanced (which is equivalent to
  $T_E$ being stable).  So for general $F$ the hypothesis of Theorem
  \ref{teo-sotto} is weaker than the assumption of Theorem 1.1 in
  \cite{arezzo-ghigi-loi}.
\end{remark}

\begin{cor}
  Let $M$ be a Hermitian symmetric space of compact type.  If $g$ is a
  \Keler metric with \Keler form $\om \in 2\pi \chern_1(M)$, then
  \begin{gather*}
    \la_1(M,g) \leq 2.
  \end{gather*}
  The bound is attained by the symmetric metric.
\end{cor}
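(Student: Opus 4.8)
The plan is to deduce the corollary directly from Theorem \ref{teo-sotto}, applied with $N=M$ and $F=\operatorname{id}_M$. First I would set $n:=\dim_\C M$ and $a:=2\pi\chern_1(M)$. Since $M$ is Hermitian symmetric of compact type it is Fano, so $-K_M$ is ample, $a$ is a genuine \keler class, and $\int_M \chern_1(M)^n>0$; in particular the right-hand side of \eqref{stima} is well defined. The identity map $F=\operatorname{id}_M : M\to M$ is holomorphic, and the hypothesis of Theorem \ref{teo-sotto} --- that no nontrivial section of $-K_M$ vanish on $F(M)=M$ --- holds trivially, a global holomorphic section vanishing on all of $M$ being the zero section.

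With $d:=\dim_\C N=n$, estimate \eqref{stima} then reads, for every \keler metric $g$ with $\om\in a=2\pi\chern_1(M)$,
\begin{gather*}
  \la_1(M,g) \leq \frac{4\pi d}{n}\cdot \frac{\int_M F^*(\chern_1(M))\cup a^{d-1}}{\int_M a^{d}}.
\end{gather*}
Here $F^*(\chern_1(M))=\chern_1(M)$, while $a^{d-1}=(2\pi)^{n-1}\chern_1(M)^{n-1}$ and $a^{d}=(2\pi)^{n}\chern_1(M)^{n}$, so the quotient of integrals equals $(2\pi)^{n-1}/(2\pi)^{n}=1/(2\pi)$ and $4\pi d/n=4\pi$. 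Hence the right-hand side is $4\pi\cdot\frac{1}{2\pi}=2$, which is the assertion.

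For sharpness I would take $g=g_{KE}$, the symmetric \KE metric normalized so that $\Ric(g_{KE})=g_{KE}$; this forces $\om_{KE}\in 2\pi\chern_1(M)$. Since $M$ is compact, homogeneous and of positive dimension, its algebra $\liek$ of Killing fields is nonzero, hence so is the algebra $\lieg$ of holomorphic fields, which contains $\liek$ as a real form by Theorem \ref{futaki}(a). Theorem \ref{futaki}(b) then gives $\la_1(g_{KE})=2$, so the upper bound is attained by the symmetric metric.

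I do not expect a genuine obstacle here: all the substance already sits in Theorem \ref{teo-sotto} (which rests on Theorem \ref{grado} and Lemma \ref{lemmazk}) and in Theorem \ref{futaki}. The only mild points of care are the cohomological bookkeeping with the factor $2\pi$ and checking that the hypotheses of Theorem \ref{teo-sotto} are met in the degenerate case $N=M$, $F=\operatorname{id}$ --- both routine.
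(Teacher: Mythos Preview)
Your proof is correct and follows exactly the paper's approach: apply Theorem \ref{teo-sotto} with $N=M$, $F=\Id$, $a=2\pi\chern_1(M)$, and invoke Theorem \ref{futaki} for sharpness. You have simply spelled out the arithmetic and the verification of hypotheses that the paper leaves implicit.
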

\begin{proof}
  To get the estimate it is enough to apply the theorem with $N=M$,
  $F=\Id$ and $a=2\pi\chern_1(M)$.  The bound is attained by $g_{KE}$
  by Theorem \ref{futaki}.
\end{proof}

\def\cprime{$'$}

\end{document}